\documentclass[a4paper,reqno]{amsart}

\usepackage[foot]{amsaddr}
\usepackage[english]{babel}

\usepackage[utf8]{inputenc}
\usepackage{csquotes}
\usepackage{amsmath,amssymb,amsfonts,amsthm}
\usepackage{tikz}
\tikzset{>=stealth}
\usetikzlibrary{shapes,arrows,shadows,snakes}
\usepackage{graphicx}
\usepackage{enumerate}
\usepackage{booktabs}
\usepackage{colortbl}
\usepackage{siunitx}
\usepackage{subcaption}
\captionsetup[subfigure]{labelfont=rm}

\usepackage{bm}
\usepackage[colorlinks=true,citecolor=blue]{hyperref}
\usepackage[
maxbibnames=99,
maxcitenames=2,
style=numeric,
backend=bibtex,
sorting=nyt,
sortcites,
giveninits=false,
doi=false,
url=false,
natbib=true]{biblatex}

\oddsidemargin  0pt
\evensidemargin 0pt
\marginparwidth 40pt
\marginparsep 0pt
\topmargin 10pt
\headsep 10pt
\textheight 8.7in
\textwidth 6.6in

\numberwithin{equation}{section}

\theoremstyle{plain}
\newtheorem{theorem}{Theorem}[section]
\newtheorem{lemma}{Lemma}[section]

\newtheorem{proposition}{Proposition}

\newtheorem{conjecture}{Conjecture}

\theoremstyle{remark}
\newtheorem{remark}{Remark}

\theoremstyle{definition}
\newtheorem{definition}{Definition}
\newtheorem{example}{Example}

\newcommand{\reals}{\mathbb{R}}

\newcommand{\vect}[1]{\bm{#1}}
\newcommand{\abs}[1]{\left\lvert#1\right\rvert}

\newcommand{\fBeta}{\mathcal{F}_\beta}
\newcommand{\hBeta}{\mathcal{H}_\beta}
\newcommand{\whBeta}{\mathcal{WH}_\beta}
\newcommand{\nfBeta}{\mathcal{P}_\beta}
\newcommand{\ini}{\mathcal{N}^{-}}
\newcommand{\outi}{\mathcal{N}^{+}}
\newcommand{\nodeset}{\ensuremath{V}}
\newcommand{\arcset}{\ensuremath{E}}

\let\leq\leqslant
\let\geq\geqslant

\addbibresource{biblio.bib}

\title{Feasible bases for a polytope related to the Hamilton cycle problem}
\author[T. Kalinowski]{Thomas Kalinowski$^{1,2}$}
\author[S. Mohammadian]{Sogol Mohammadian$^2$}
\address{$^1$School of Science and Technology, University of New England, NSW, Australia}
\address{$^2$School of Mathematical and Physical Sciences, University of Newcastle, NSW,
	Australia}

\email[T.~Kalinowski]{t.kalinow@une.edu.au}
\email[S.~Mohammadian]{sogol.mohammadian@newcastle.edu.au}

\date{}

\begin{document}
	
\begin{abstract} 
  We study a certain polytope depending on a graph $G$ and a parameter $\beta\in(0,1)$ which arises
  from embedding the Hamiltonian cycle problem in a discounted Markov decision
  process. \citet{Eshragh2018} conjectured a lower bound on the proportion of feasible bases
  corresponding to Hamiltonian cycles in the set of all feasible bases. We make progress towards a
  proof of the conjecture by proving results about the structure of feasible bases. In particular,
  we prove three main results: (1) the set of feasible bases is independent of the parameter $\beta$
  when the parameter is close to 1, (2) the polytope can be interpreted as a generalized network
  flow polytope and (3) we deduce a combinatorial interpretation of the feasible
  bases. We also provide a full characterization for a special class of feasible bases, and we
  apply this to provide some computational support for the conjecture.
\end{abstract}
	
\keywords{Hamilton cycle, polytope, feasible basis, random walk}
\subjclass[2010]{90C27, 90C35}
	
\maketitle

\section{Introduction}\label{sec:intro}
The \emph{Hamilton Cycle Problem} (HCP) is one of the classical problems in combinatorics. Given
a graph~$G$, the problem is to decide if $G$ contains a cycle that visits each node exactly
once. Cycles that pass through every node of a graph exactly once are called \emph{Hamilton
  cycles}. If a graph contains at least one Hamilton cycle, then it is called
\emph{Hamiltonian}. Otherwise, it is \emph{non-Hamiltonian}. The HCP is NP-complete even for planar
graphs with maximum degree three for undirected graphs, and maximum degree two for directed
graphs~\cite{garey1976planar,Plesnik1979}, so it is unlikely that there is an exact algorithm which
terminates in polynomial time and solves the problem in general.

The Traveling Salesman Problem (TSP) asks for a Hamilton cycle of minimum weight in an arc-weighted
graph, and therefore the HCP is a special case of the TSP where all the weights are either zero or
one. Due to its importance in many applications and its rich mathematical structure the TSP has
attracted the attention of many researchers. In particular, it has been one of the major driving
forces for the development of polyhedral techniques in combinatorial
optimization (see~\cite{applegate2011traveling} for a nice overview). The underlying idea is to identify a subset of the arc
set of a graph on $n$ vertices with its characteristic vector in $\reals^{n(n-1)/2}$. Then the TSP
is asking for the minimum of a linear function over the convex hull of the set of  Hamilton cycles,
and this convex hull is known as the traveling salesman polytope. 

In 1994, \citet{filar1994hamiltonian} proposed a new approach to the HCP, based on the theory of
\emph{Markov Decision Processes} (MDPs).  An MDP comprises a state space, an action space,
transition probabilities between states (which depend on the actions taken by the decision maker)
and a reward function. In the basic setting, the decision maker takes an action, receives a reward
from the environment, and the environment changes its state. Next, the decision maker identifies the
state of the environment, takes a further action, obtains a reward, and so forth. The state
transitions are probabilistic, and depend solely on the state and the action taken by the decision
maker. The reward obtained by the decision maker depends on the action taken, and on the current
state of the environment. The decision maker's actions in each environmental state are prescribed by
a policy. The model introduced by \citet{filar1994hamiltonian} initiated a new line of research and
has attracted growing attention (see, for instance,
\cite{avrachenkov2016transition,Borkar2011,ejov2004interior,ejov2008determinants,ejov2009refined,Ejov2011,feinberg2000constrained,Filar2015,litvak2009markov}). In
particular, \citet{feinberg2000constrained} investigated the relationship between the HCP and
\emph{discounted} MDPs. In discounted MDPs, a discount factor $\beta \in (0,1)$, which represents
the difference in importance between future and present rewards, is used to discount
rewards. Feinberg presented a polytope, which we shall refer to as $\fBeta(G)$, constructed from an
input graph $G$. He showed that the Hamilton cycles of $G$ correspond to certain extreme points of
the polytope $\fBeta(G)$, called \emph{Hamiltonian extreme points}. \citet{ejov2009refined}
described some geometric properties of $\fBeta(G)$ and \citet{eshragh2011hybrid} transformed
$\fBeta(G)$ to a combinatorially equivalent polytope $\hBeta(G)$ to avoid certain numerical
issues. Moreover, they constructed a new polytope $\whBeta(G)$ by adding new linear constraints,
called \emph{wedge constraints}, to the polytope $\hBeta(G)$.

In 2011, \citet{eshragh2011hamiltonian} partitioned the extreme points of $\hBeta(G)$ into five
types, consisting of Hamiltonian extreme points and four types of non-Hamiltonian extreme
points. They showed that for a discount factor $\beta$ sufficiently close to one, the wedge
constraints cut off the non-Hamiltonian extreme points of types 2, 3 and 4, while preserving the
Hamiltonian extreme points. In addition, they proposed to use a random walk on the extreme points
(or on the feasible bases) of the polytopes $\hBeta(G)$ and $\whBeta(G)$ to search for a Hamilton
cycle, and they showed that for a Hamiltonian graph $G$, the random walk algorithm detects a
Hamiltonian extreme point with probability one in a finite number of iterations.
 
For a more precise analysis of the efficiency of this random walk approach it is necessary to
understand the combinatorial structure of the polytopes. More precisely, it is required to analyze
the prevalence of Hamiltonian extreme points within the set of all extreme points, as well as the
mixing properties of the random walk. This was the motivation for the work of \citet{Eshragh2018}
who established results on the combinatorial structure of the polytope~$\hBeta(G)$. They
characterized feasible bases of the polytope $\hBeta(G)$ for a general input graph~$G$, and
determined the expected numbers of different types of feasible bases when the underlying graph is
random. They showed that for a random graph, the number of feasible bases corresponding to
Hamiltonian extreme points of $\hBeta(G)$ is exponentially small compared to the total number of
feasible bases. Moreover, they demonstrated that the wedge constraints eliminate a large number of
non-Hamiltonian feasible bases, and they provided computational evidence for the efficiency of the
random walk on the feasible bases of~$\whBeta(G)$. Based on their computational and theoretical
results, they conjectured that for a random graph on $n$ nodes the ratio between the number of
feasible bases corresponding to Hamilton cycles and the total number of feasible bases is
asymptotically bounded below by $c/n^k$ for some positive constants $c$ and $k$.

In this paper, we continue this line of research by studying the structure of the polytope
$\whBeta(G)$. In particular, we are interested in characterizing feasible bases for this
polytope. In Section~\ref{sec:notation}, we introduce some notation and provide relevant background
from the literature. Section~\ref{sec:general} is about general results on feasible bases for
$\whBeta(G)$. In particular, we prove that there is a constant $\alpha^*=\alpha^*(n)<1$ such that for
all graphs $G$ on $n$ vertices, the set of feasible bases for $\whBeta(G)$ does not depend on
$\beta$ as long as $\alpha^*\leq\beta<1$. Then we establish a close relationship between $\whBeta(G)$
and a generalized network flow polytope for a graph that is obtained from $G$ by splitting each node
into two nodes. The third result of Section~\ref{sec:general} is a proof that any feasible basis
contains a set of $n$ arcs forming a collection of node-disjoint cycles, such that in the
corresponding basic feasible solution, the values of the variables corresponding to these $n$ arcs tend
to $1$ as $\beta\to 1$, while the values of the remaining basic variables tend to $0$. In
Section~\ref{sec:class_of_bases}, we use the relation to the generalized network flow polytope to
give a complete characterization of feasible bases of $\whBeta(K_n)$ which correspond to a
Hamilton cycle in the generalized network flow setting.


\section{Notation and background}\label{sec:notation}
Consider a digraph $G=(\nodeset,\arcset)$ without loops and parallel arcs, where
$\nodeset=[n]=\{1,2,\dots,n\}$ is the set of nodes and $\arcset$ is the set of arcs. Throughout this
paper, $G$ refers to such a digraph on $n$ nodes, unless otherwise stated. For each node
$i\in \nodeset$, the in-neighborhood $\ini(i)$, and the out-neighborhood $\outi(i)$ are the sets
\begin{align*}
  \ini(i) &= \{j\in \nodeset\,:\,(j,i)\in \arcset\},&  \outi(i)&=\{j\in \nodeset\,:\,(i,j)\in \arcset\}.
\end{align*}
Furthermore, we use the following
notations to denote the total inflow and total outflow for node $i$, respectively:
\begin{align*}
\Phi(i)&=\sum_{j\in \ini(i)} x_{ji},& \Psi(i)&=\sum_{j\in \outi(i)} x_{ij}. 
\end{align*}

As indicated earlier, \citet{feinberg2000constrained} defined a polytope depending on the graph $G$,
and showed that finding a Hamilton cycle is equivalent to finding an extreme point of this polytope
whose \emph{support} corresponds to a Hamilton cycle. The support of an extreme point is defined to be the set of its non-zero coordinates. More precisely, he proved the following theorem.
\begin{theorem}[\citet{feinberg2000constrained}]\label{thm:Fienberg_thm}
  Consider a digraph $G=(\nodeset,\arcset)$, a parameter $\beta$ with $0<\beta<1$, and let
  $\fBeta(G)\subseteq\reals^{\lvert E\rvert}$ be the polytope defined by the constraints
  \begin{align}
    \sum_{j \in \outi(1)} y_{1j} - \beta \sum_{j \in \ini(1)} y_{j1} &= 1,\label{eq_f:flow_conservation_1}\\
    \sum_{j \in \outi(i)} y_{ij} - \beta \sum_{j \in \ini(i)} y_{ji} &= 0 && \text{for all } i \in \nodeset \setminus \{1\},\label{eq_f:flow_conservation_i}\\
    \sum_{j \in \outi(1)} y_{1j} & = \frac{1}{1 - \beta^n},\label{eq_f:flow_injection}\\
    y_{ij}&\geq 0 &&\mbox{for all }(i,j) \in \arcset.\label{eq_f:nonnegativity}
  \end{align} 
  The graph $G$ is Hamiltonian if and only if there exists an extreme point of $\fBeta(G)$ which has
  exactly $n$ positive coordinates tracing out a Hamilton cycle in $G$.
\end{theorem}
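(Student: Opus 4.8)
The plan is to note that one implication is essentially definitional and to establish the other by writing down the extreme point explicitly. Indeed, if $\fBeta(G)$ has an extreme point with exactly $n$ positive coordinates tracing out a Hamilton cycle in $G$, then $G$ is Hamiltonian by definition, so there is nothing to prove. For the converse, I would fix a Hamilton cycle $H$ of $G$, write it as $1 = v_0 \to v_1 \to \dots \to v_{n-1} \to v_n = v_0$ with arcs $e_k = (v_{k-1}, v_k)$ for $k = 1, \dots, n$, and define $y \in \reals^{\abs{E}}$ by $y_{e_k} = \beta^{k-1}/(1 - \beta^n)$ for $k = 1, \dots, n$ and $y_{ij} = 0$ for $(i,j) \notin E(H)$.

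The first step is to verify $y \in \fBeta(G)$. For an interior node $v_k$ with $1 \le k \le n-1$, the only positive in-arc is $e_k$ and the only positive out-arc is $e_{k+1}$, so \eqref{eq_f:flow_conservation_i} reduces to $y_{e_{k+1}} = \beta y_{e_k}$, which holds by construction and gives $y_{e_k} = \beta^{k-1} y_{e_1}$ for all $k$. Then \eqref{eq_f:flow_conservation_1} at node $1$ reads $y_{e_1} - \beta y_{e_n} = y_{e_1}(1 - \beta^n) = 1$, which holds by the choice of $y_{e_1}$; constraint \eqref{eq_f:flow_injection} reads $y_{e_1} = 1/(1-\beta^n)$, again true; and \eqref{eq_f:nonnegativity} holds with all $n$ of these coordinates strictly positive because $0 < \beta < 1$. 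Thus $y$ is feasible and its support is exactly $E(H)$, tracing out $H$.

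The second and main step is to show $y$ is an extreme point. By the standard characterization of vertices of a polyhedron $\{x \ge 0 : Ax = b\}$, it suffices to check that the columns of the constraint matrix indexed by the support $E(H)$ of $y$ are linearly independent; and for this it is already enough to restrict to the $n$ rows given by the flow-conservation equations \eqref{eq_f:flow_conservation_1}--\eqref{eq_f:flow_conservation_i}. Ordering these rows by $v_0, \dots, v_{n-1}$ and the columns by $e_1, \dots, e_n$, the resulting $n \times n$ matrix has $1$'s on the diagonal and $-\beta$'s in the cyclic subdiagonal positions, i.e.\ it equals $I - \beta S$ for the permutation matrix $S$ of the $n$-cycle, so its determinant is $\prod_{j=0}^{n-1}\bigl(1 - \beta e^{2\pi i j / n}\bigr) = 1 - \beta^n \neq 0$. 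Hence the columns are independent and $y$ is a vertex. (Equivalently, nonsingularity of this system says that $y$ is the \emph{unique} feasible point supported on $E(H)$, and a nonnegative feasible point that is the only feasible point on its own support cannot be a proper convex combination of distinct feasible points.) I expect this extremality step to be the only one requiring any care, and the single fact making everything work is that $\beta \in (0,1)$ forces $1 - \beta^n \neq 0$, so both the definition of $y$ and the nonsingularity above are legitimate. Conceptually, $y$ is nothing but the discounted state--action occupation measure of the deterministic stationary policy that follows $H$ from node $1$, which explains why the constraints of $\fBeta(G)$ take this discounted flow-conservation form; but the verification just sketched does not rely on that interpretation.
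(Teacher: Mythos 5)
Your proposal is correct. Both directions of the argument are sound: the forward direction is immediate from the definition, and for the converse your explicit construction $y_{e_k} = \beta^{k-1}/(1-\beta^n)$ is verified to be feasible, and the extremality argument via linear independence of the support columns is exactly the right tool (the $n\times n$ minor $I-\beta S$ with $S$ the $n$-cycle permutation matrix has determinant $1-\beta^n \neq 0$ precisely because $\beta\in(0,1)$, which is the one place the hypothesis on $\beta$ enters). The parenthetical remark that a nonnegative feasible point uniquely determined by its support cannot be a proper convex combination is a clean way of recalling why column independence gives vertexhood.

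One thing worth noting: the paper itself does not prove this statement. Theorem~\ref{thm:Fienberg_thm} is quoted from \citet{feinberg2000constrained} and used as an imported result, so there is no in-paper proof to compare yours against. Your basic-feasible-solution argument is the natural self-contained route, and it correctly reconstructs why the result holds. The closing remark about $y$ being the discounted state-action occupation measure of the policy that follows $H$ is the right MDP interpretation of Feinberg's setting, and it is a nice touch to flag that the verification itself does not depend on it.
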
  
\citet{eshragh2011hybrid} modified $\fBeta(G)$ by a coordinate transformation $x_{ij}= (1-\beta^n)y_{ij}$
for all $(i,j) \in \arcset$. The resulting polytope $\hBeta(G) \subseteq \reals^{\lvert \arcset
  \rvert}$ is defined by the constraints
\begin{align}
\sum_{j\in \outi(1)} x_{1j}-\beta\sum_{j\in \ini(1)}
x_{j1} & = 1-\beta^n,  \label{eq_h:flow_conservation_1}\\
\sum_{j\in \outi(i)} x_{ij}-\beta\sum_{j\in \ini(i)}
x_{ji} & = 0 &&\mbox{for all}\ i \in \nodeset \setminus\{1\},  \label{eq_h:flow_conservation_i}\\
\medskip \sum_{j\in \outi(1)}x_{1j} & = 1, \label{eq_h:flow_injection} \\
x_{ij} & \geq 0 &&\mbox{for all}\ (i,j)\in \arcset. \label{eq_h:nonnegativity}
\end{align}
Since values of $\beta$ close to one were shown to be important in
\cite{eshragh2011hamiltonian,eshragh2011hybrid}, this transformation eliminates numerical
instability in~\eqref{eq_f:flow_injection}. The following definition is motivated directly from
Theorem~\ref{thm:Fienberg_thm}.
\begin{definition}\label{def:Ham_EP}
  Let $\vect x$ be an extreme point of the polytope $\mathcal{H}_\beta(G)$. If the positive
  coordinates of $\vect x$ trace out a Hamilton cycle in the graph $G$, $\vect x$ is called a
  Hamiltonian extreme point. Otherwise, it is called a non-Hamiltonian extreme point.
\end{definition}
\citet{eshragh2011hybrid} observed that, if $\vect{x}$ is the Hamiltonian extreme point corresponding
to a Hamilton cycle~$C$ then its components are given by $x_{ij}=\beta^k$ if $(i,j)$ is the
$(k-1)$-th arc in $C$ starting from node 1, and $x_{ij}=0$ if $(i,j)$ is not contained in
$C$. In particular, the Hamiltonian extreme points satisfy the $2(n-1)$ wedge constraints 
\begin{equation}\label{wedge_constraints}
\beta^{n-1} \leq \sum_{j \in \outi(i)} x_{ij} \leq \beta \qquad \mbox{for all } i \in \nodeset \setminus \{1\},  
\end{equation}
which cut off some non-Hamiltonian extreme points. Adding the wedge constraints and introducing
slack variables $y_i$, we obtain the polytope $\whBeta(G)$ described by the following constraints:
\begin{align}
  \sum_{j\in\outi(1)}x_{1j} - \beta\sum_{j\in\ini(1)}x_{j1} & = 1 - \beta^n,\label{eq_wh:flow_extraction}\\
  \sum_{j\in\outi(i)}x_{ij}-\beta\sum_{j\in\ini(i)}x_{ji} &= 0 && \text{for all }i\in \nodeset\setminus\{1\},\label{eq_wh:flow_conservation}\\
  \sum_{j\in\outi(1)}x_{1j} &= 1,\label{eq_wh:flow_injection}\\
  \sum_{j\in\outi(i)}x_{ij}-y_i &= \beta^{n-1} &&\text{for all }i\in\nodeset\setminus\{1\},\label{eq_wh:flow_bounds}\\
  0\leq y_i &\leq \beta-\beta^{n-1} &&\text{for all }i\in\nodeset \setminus\{1\},\label{eq_wh:slacks}\\
  x_{ij} &\geq 0 &&\text{for all }(i,j)\in E. \label{eq_wh:nonnegativity} 
\end{align}
A basis for this polytope can be specified by a triple $(B,L,U)$, where $B$ is the set of basic
variables, and the sets $L$ and $U$ are non-basic $y$-variables at their lower and upper bounds,
respectively. In other words, $L\cup U$ is the partition of the set
$\{i\in V\setminus\{1\}\,:\,y_i\text{ is a non-basic variable}\}$, such that $y_i=0$ for $i\in L$
and $y_i=\beta-\beta^{n-1}$ for $i\in U$. By a slight abuse of notation, we will simply call the
triple $(B,L,U)$ a basis. The set $B$ can be identified with the union $A\cup Y$, where
$A\subseteq E$ is the set of arcs $(i,j)\in E$ such that $x_{ij}$ is a basic variable, and
$Y\subseteq V\setminus\{1\}$ is the set of nodes $i$ such that $y_i$ is a basic variable. A basis
$(B,L,U)$ can then be interpreted as a node-colored digraph on the node set $V$: the arc set is $A$
and the color classes are $\{1\}$, $Y$, $L$ and $U$.  If the unique solution of the system of
equations~\eqref{eq_wh:flow_extraction}--\eqref{eq_wh:flow_bounds} corresponding to $(B,L,U)$
satisfy the lower and upper bound constrains~\eqref{eq_wh:slacks}--\eqref{eq_wh:nonnegativity}, then
the basis $(B,L,U)$ is feasible, otherwise it is infeasible.
\begin{remark}
  For the extreme point $(\vect x,\vect y)$ corresponding to a basis $(B,L,U)$, the total in and
  outflows of the nodes in $L\cup U$ are as follows
  \begin{align}
    \Phi(i) &=\sum_{j\in \ini(i)} x_{ji}=
              \begin{cases}
                \beta^{n-2} &\text{for }i\in L,\\
                1&\text{for }i\in U,
              \end{cases}\label{eq:inflow_LU}\\
    \Psi(i) &=\sum_{j\in \ini(i)} x_{ji}=
              \begin{cases}
                \beta^{n-1} &\text{for }i\in L,\\
                \beta&\text{for }i\in U,
              \end{cases}\label{eq:outflow_LU}
  \end{align}
\end{remark}
\citet{eshragh2011hybrid} introduced the concept of quasi-Hamiltonian extreme points (bases) to
search for Hamilton cycles among the extreme points (or the feasible bases) of the polytope
$\whBeta(G)$.
\begin{definition}\label{def:QHam_EP}
  An extreme point $(\vect x,\vect y)$ of $\whBeta(G)$ is called \emph{quasi-Hamiltonian} if any walk
  $i_1 \rightarrow i_2 \rightarrow \dots\rightarrow i_n \rightarrow i_{n+1}$, where
  $i_{k+1} \in \arg\max_{j \in \mathcal{N^+}(i_k)}\{ x_{i_k j} \}$ for $k = 1,\dots, n$, is a
  Hamilton cycle in $G$. A feasible basis corresponding to a quasi-Hamiltonian extreme point is called
  \emph{quasi-Hamiltonian basis}.
\end{definition}
For a positive integer $n$ and a probability $p$, $0<p<1$ (which may depend on $n$), let
$\bar G_{n,p}$ be the graph on $n$ vertices obtained by adding each arc $(i,j)$ independently with
probability $p$, and then adding the arcs of a randomly chosen Hamilton cycle (this is very similar
to the random graph model studied in~\cite{broder1994finding}). Motivated by a random walk approach
to the HCP for sparse random graphs, the following conjecture was made in~\cite{Eshragh2018}.
\begin{conjecture}\label{con:EFKM}
  There exist positive constants $c$, $\delta$ and $k$ such that for all $\beta\in(1-e^{-cn},1)$,
  with high probability, the expected proportion of feasible bases of $\whBeta(\bar G_{n,p})$ that
  are quasi-Hamiltonian is at least $\delta/n^k$.
\end{conjecture}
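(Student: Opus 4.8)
We cannot settle Conjecture~\ref{con:EFKM} here, but the structural results announced for Section~\ref{sec:general} suggest a concrete strategy, which we now outline. The plan is to combine those results with a first-moment computation on the random graph model $\bar G_{n,p}$, and then to upgrade the resulting statement about expectations to the high-probability statement of the conjecture. First one would invoke the independence of the set of feasible bases from $\beta$ for $\beta\in(1-e^{-cn},1)$ to replace, once and for all, the polytope $\whBeta(\bar G_{n,p})$ by a single $\beta$-free combinatorial object: a family of node-colored digraphs $(B,L,U)$ on $[n]$. This eliminates $\beta$ from the analysis and turns the conjecture into a counting statement about such colored digraphs. Next one would use the identification of $\whBeta(G)$ with a generalized network flow polytope on the split graph, together with the fact that every feasible basis contains a distinguished set $S$ of $n$ arcs forming node-disjoint cycles spanning all of $[n]$ whose basic values tend to $1$ while all other basic variables tend to $0$. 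This set $S$ --- the \emph{skeleton} of the basis --- is the key combinatorial handle: since being quasi-Hamiltonian is governed by whether the locally-maximal out-arc at each node is followed consistently around a single cycle, and since for $\beta$ close to $1$ the skeleton arc at a node dominates every other arc leaving that node, a feasible basis is quasi-Hamiltonian precisely when its skeleton is a Hamilton cycle (modulo a careful treatment of ties, which is controlled by the wedge and flow constraints).

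With this reduction in place one would run the first-moment method. Let $F=F(\bar G_{n,p})$ be the number of feasible bases of $\whBeta(\bar G_{n,p})$ and let $F_Q$ be the number of those that are quasi-Hamiltonian; the target is $F_Q/F\ge\delta/n^k$ with high probability. One would bound $\expect{F_Q}$ from below by counting feasible bases whose skeleton is the planted Hamilton cycle (which is present by construction) together with their admissible completions: the non-skeleton basic arcs, the colours $L$ and $U$, and the basic $y$-variables, using the characterization of feasible bases from Section~\ref{sec:class_of_bases} (proved there for $K_n$ and adapted to the present setting). Simultaneously one would bound $\expect{F}$ from above by summing, over all admissible skeletons $S$ (arbitrary collections of node-disjoint cycles spanning $[n]$, not just Hamilton cycles), the number of feasible completions of $S$. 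Comparing the two estimates skeleton by skeleton, one wants to show that the Hamilton-cycle skeletons already account for at least a $\delta/n^k$ fraction of the total. Sparsity of $\bar G_{n,p}$ should enter here: in a sparse graph both the number of skeletons with many components and the number of feasible completions attached to them are controlled in terms of the cycle structure, so the Hamilton-cycle skeleton is not drowned out by more fragmented ones.

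The step I expect to be genuinely hard is the passage from expectations to the high-probability conclusion. A first-moment estimate pins down $\expect{F}$ and $\expect{F_Q}$ but says nothing about the random variable $F_Q/F$, and the conjecture requires concentration. One would aim to prove that $F$ is concentrated around $\expect{F}$ up to a polynomial factor and that $F_Q$ admits a matching one-sided (lower) concentration, which together suffice. Both tasks are delicate because feasible bases are strongly correlated objects and the arc set of $\bar G_{n,p}$ is itself random with a planted Hamilton cycle, so the natural tools are second-moment estimates or a Doob martingale from a suitable exposure of the arcs together with Azuma--Talagrand inequalities. The crux is the upper tail of $F$: one must exclude the atypical graphs in which a handful of extra arcs triggers an exponential explosion of feasible bases without a corresponding increase in quasi-Hamiltonian ones, and this demands a quantitative understanding of how feasible completions of a fixed skeleton proliferate under small perturbations of the graph. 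Making the $K_n$ characterization of Section~\ref{sec:class_of_bases} robust in the sparse random setting, and carrying out the associated enumeration uniformly over all skeletons, is the technical core on which the whole argument rests --- and is, presumably, exactly the gap that keeps the statement a conjecture rather than a theorem.
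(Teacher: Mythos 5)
You are right to refuse to offer a proof: the paper itself treats this statement as an open conjecture (attributed to Eshragh~et~al.) and makes no claim to resolve it. The paper's contribution is exactly the structural groundwork you cite---$\beta$-independence of the feasible-basis set (Proposition~\ref{prop:ultimate_bases}), the generalized network flow reformulation and good-augmented-tree characterization (Theorems~\ref{thm:augmented_tree_bases} and~\ref{thm:bases_structure}), the thick/thin dichotomy and skeleton-of-disjoint-cycles structure (Theorem~\ref{thm:disjoint_cycle_structure})---together with a complete characterization of one narrow class of feasible bases in Section~\ref{sec:class_of_bases}, and a table of small-$n$ counts that is the sum total of the paper's evidence for the conjecture. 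So on the central question of whether the paper has a proof you can be compared to, the answer is no, and your assessment is accurate.

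Your roadmap aligns well with how the paper frames the problem, and your observation that a feasible basis is quasi-Hamiltonian exactly when its skeleton of thick arcs is a single Hamilton cycle is correct and is, in fact, made explicit in Section~\ref{sec:class_of_bases} (``the basis $(B,L,U)$ is quasi-Hamiltonian if and only if the permutation $\pi$ is a single cycle''); the tie-breaking worry you flag does not arise here because thick arcs have value $1-O(\delta)$ while thin arcs have value $O(\delta)$, so the argmax at each node is unambiguous once $\beta$ is close enough to $1$. However, you understate one obstacle that deserves at least the same billing as the concentration problem. The characterization that Section~\ref{sec:class_of_bases} proves---Theorems~\ref{thm:char_1}, \ref{thm:oddn_char}, \ref{thm:evenn_char}---applies only to bases whose good augmented tree in $G'$ is itself a Hamilton cycle in $G'$ with \emph{exactly one} basic $y$-variable, i.e.\ the family parametrized by $(s,\pi,L,U)$ in the normal form of Figure~\ref{fig:HC_one_yvar}. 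That is one isomorphism class of good augmented trees out of many. Your proposed first-moment computation---``summing, over all admissible skeletons $S$ \ldots the number of feasible completions of $S$''---requires an enumeration of feasible completions that is simply not available in the paper, nor, as far as one can tell, anywhere else, even for $K_n$. So before one reaches the (admittedly hard) concentration step, there is an enumerative gap: one needs an analogue of Theorems~\ref{thm:oddn_char}/\ref{thm:evenn_char} that is uniform over all skeleton types, all good-augmented-tree shapes, and all numbers of basic $y$-variables, and the paper's own numerical tables (which break the count down by cycle type of $\pi$) strongly suggest that this count has intricate structure. Naming that explicitly, alongside the concentration problem, would make your account of why the statement remains a conjecture more faithful to what the paper actually establishes.
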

Proving this conjecture would be a first step towards a random walk based algorithm for the HCP on
$\bar G_{n,p}$ for very small $p$.


\section{General results}\label{sec:general}
In this section we establish general results about feasible bases of $\whBeta(G)$. In
Subsection~\ref{subsec:ultimate}, we show that the set of feasible bases for $\whBeta(G)$ does not
depend on the parameter $\beta$ for values of $\beta$ sufficiently close to one. Then, in
Subsection~\ref{subsec:gnf}, we demonstrate that $\whBeta(G)$ can be interpreted as a generalized
network flow polytope for a certain network on $2n$ nodes obtained by splitting each node of $G$
into two nodes. The polyhedral theory of the generalized network flow problem implies that the bases
in this setting correspond to subgraphs in which every connected component is an augmented tree,
that is, a graph obtained from adding a single arc to a tree, thus creating a unique cycle. We prove
that the \emph{feasible} bases in the generalized network flow interpretation of $\whBeta(G)$ are
always connected, that is, a feasible basis corresponds to a spanning augmented tree. Finally, in
Subsection~\ref{subsec:thick_and_thin}, we show that in the original graph $G$ every feasible basis
contains a spanning collection of node-disjoint cycles, such that the values of the variables
corresponding to the arcs of these cycles are close to one, while the values of the remaining
variables are close to zero.

\subsection{Ultimate bases}\label{subsec:ultimate}
Let $\Gamma(G,\beta)$ be the set of feasible bases for $\whBeta(G)$. We first show that there exists
$\alpha^*\in (0,1)$ such that $\Gamma(G,\beta_1)=\Gamma(G,\beta_2)$ whenever
$\alpha^*<\beta_1\leq\beta_2<1$, that is, the set $\Gamma(G,\beta)$ does not depend on the parameter
$\beta$, as long as it is sufficiently close to $1$. Consider a triple $(B,L,U)$ with $\abs{B}=2n$,
where $B=A\cup Y$, $A\subseteq E$ and $Y\subseteq V\setminus\{1\}$, and $L\cup U$ is a partition of
the set $V\setminus( \{1\}\cup Y)$. Let $M_B(\beta)$ be the submatrix of the constraint matrix of
the polytope $\whBeta(G)$ corresponding to $B$, set $y_i=0$ for $i\in L$ and $y_i=\beta-\beta^{n-1}$
for $i\in U$, and let $\vect{b}(\beta)$ be the vector that is obtained from the right hand sides of
\eqref{eq_wh:flow_extraction} through~\eqref{eq_wh:flow_bounds}, where for $i\in U$ the
$\beta^{n-1}$ in~\eqref{eq_wh:flow_bounds} is replaced by
$\beta^{n-1}+\left(\beta-\beta^{n-1}\right)=\beta$. The triple $(B,L,U)$ is a feasible basis for
$\whBeta(G)$ if and only if the following two conditions are satisfied:
\begin{enumerate}
\item $M_B(\beta)$ is invertible, that is, $\det(M_B(\beta))\neq 0$, and
\item the $y$-components of the vector $(\vect x,\vect y)=M_B(\beta)^{-1}\vect b(\beta)$
  satisfy~\eqref{eq_wh:slacks}, and the $x$-components satisfy~\eqref{eq_wh:nonnegativity}.
\end{enumerate}
We refer to these conditions as \emph{independence} and \emph{feasibility}, respectively. In the
next two lemmas we show that these two properties do not depend on $\beta$ for $\beta$ sufficiently
close to~$1$. The determinant $\det(M_B(\beta))$ is a polynomial in $\beta$. If this is the zero
polynomial, then the independence condition is not satisfied for any $\beta$. On the other hand, if
the determinant is not the zero polynomial, then $B$ is independent for all $\beta$ sufficiently
close to~$1$. For the formal argument we let $\mathcal B$ be the set of all sets of $2n$ variables
such that the corresponding columns are independent for some $\beta$, that is,
\[\mathcal B=\{B\,:\,B=A\cup Y,\, A\subseteq E,\,Y\subseteq V\setminus\{1\},\, \abs{B}=2n,\,
  \,\det(M_B(\beta)) \not\equiv 0\}.\]
Since $\mathcal B$ is finite, there is an $\alpha\in[0,1)$
such that none of the polynomials $\det M_B(\beta)$, $B\in\mathcal B$, has a root in the open interval
$(\alpha,1)$. This implies the following lemma.
\begin{lemma}\label{lem:bound1_on_beta}
  There exists $\alpha \in [0,1)$ such that for all $\beta > \alpha$, and for all $B=A\cup Y$,
  $A\subseteq E$, $Y\subseteq V\setminus\{1\}$, with $\lvert B \rvert = 2n$, the matrix $M_B(\beta)$
  is invertible for every $B\in\mathcal B$.
\end{lemma}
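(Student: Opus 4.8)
The plan is to make precise the observation recorded in the paragraph preceding the statement. First I would check that every entry of the constraint matrix of $\whBeta(G)$ is an affine function of $\beta$ with integer coefficients: reading off \eqref{eq_wh:flow_extraction}--\eqref{eq_wh:flow_bounds}, the column of a variable $x_{ij}$ has entry $1$ in the outflow equation of node $i$, entry $-\beta$ in the inflow equation of node $j$, entry $1$ in the wedge row of node $i$ (and, for $i=1$, entry $1$ in the injection row), and zeros elsewhere; the column of $y_i$ has entry $-1$ in the wedge row of $i$ and zeros elsewhere. Consequently, for any candidate $B=A\cup Y$ with $\abs{B}=2n$, the map $\beta\mapsto\det(M_B(\beta))$ is a polynomial in $\beta$ with integer coefficients of degree at most $2n$, since $M_B(\beta)$ is a $2n\times 2n$ matrix whose entries are affine in $\beta$.

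Second, I would split the (finite) family of all such $2n$-subsets $B$ into those for which this determinant polynomial is identically zero and the set $\mathcal{B}$ of those for which it is not; this is exactly the dichotomy used to define $\mathcal{B}$. The ground set $E\cup(V\setminus\{1\})$ is finite, so $\mathcal{B}$ is finite, and for each $B\in\mathcal{B}$ the polynomial $\det(M_B(\beta))$ is a nonzero polynomial and hence has at most $2n$ roots in $\reals$. Let $R$ denote the union, over all $B\in\mathcal{B}$, of those roots of $\det(M_B(\beta))$ that lie in the interval $[0,1)$; then $R$ is a finite subset of $[0,1)$.

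Third, I would set $\alpha=\max(R\cup\{0\})$, so that $\alpha\in[0,1)$ and the open interval $(\alpha,1)$ contains no root of $\det(M_B(\beta))$ for any $B\in\mathcal{B}$. It follows that for every $\beta$ with $\alpha<\beta<1$ and every $B\in\mathcal{B}$ we have $\det(M_B(\beta))\neq 0$, i.e.\ $M_B(\beta)$ is invertible, which is precisely the assertion of the lemma. (Note that $M_B(\beta)$ depends only on the column set $B=A\cup Y$, not on the partition $L\cup U$ of the remaining $y$-indices, so the quantification over $B$ alone is the correct one.)

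I do not expect a genuine obstacle here: the entire content is the elementary fact that a nonzero univariate polynomial has finitely many roots, applied uniformly over the finite set $\mathcal{B}$. The only points that require a moment of care are verifying that the matrix entries are truly polynomial (indeed affine) in $\beta$, so that ``$\det(M_B(\beta))$ not identically zero'' upgrades to ``$\det(M_B(\beta))$ has only finitely many roots'', and that finiteness of the collection of $2n$-subsets is what allows a single threshold $\alpha$ to work simultaneously for all $B\in\mathcal{B}$. If later one needs the threshold to be uniform over all graphs $G$ on $n$ nodes, one can either rerun the argument for $K_n$ alone (every $G$ on $[n]$ being a subgraph of $K_n$, with the corresponding $M_B$ a submatrix) or simply take the maximum of the finitely many values $\alpha$ obtained for the finitely many graphs on vertex set $[n]$.
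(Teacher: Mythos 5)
Your proof is correct and follows essentially the same route as the paper: it fixes a threshold $\alpha$ as the maximum over the finite family $\mathcal B$ of the largest root in $[0,1)$ of each non-identically-zero determinant polynomial, exactly as the paper does with its auxiliary function $H$. The additional detail you supply (verifying that the entries of $M_B(\beta)$ are affine in $\beta$, hence that $\det M_B(\beta)$ is a genuine polynomial) and the closing remark about uniformity over all graphs on $[n]$ are both sound, but they do not change the argument; they merely make explicit what the paper leaves implicit.
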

\begin{proof}
  Define a function $H:\mathcal B\to[0,1)$ as follows:
  \[ H(B) =
    \begin{cases}
      \max\{ \beta\, :\,0\leq\beta< 1\, \det(M_B(\beta)) = 0  \} & \text{if }\det(M_B(\beta)) = 0 \text{ for some }\beta\in(0,1),\\
      0 & \text{otherwise.}
    \end{cases} \]
  Then $\alpha= \max\{ H(B)\,:\, B\in\mathcal B\}$ has the claimed property. It
  follows immediately from the construction that $0\leq\alpha<1$, and that for every $B\in\mathcal
  B$ and every $\beta$ with $\alpha<\beta<1$, $\det(M_B(\beta))\neq 0$.
\end{proof}
The next step is to show that feasibility is also independent of $\beta$ for values sufficiently
close to~$1$.
\begin{lemma}\label{lem:bound2_on_beta}
  There exists $\alpha^*\in(\alpha,1)$ such that for all $\beta \geq \alpha^*$, and for all
  $B\in\mathcal B$, the vector $M_B(\beta)^{-1}\vect b(\beta)$ satisfies~\eqref{eq_wh:slacks}
  and~\eqref{eq_wh:nonnegativity} if and only if $M_B(\alpha^*)^{-1}\vect b(\alpha^*)$
  satisfies~\eqref{eq_wh:slacks} and~\eqref{eq_wh:nonnegativity}.
\end{lemma}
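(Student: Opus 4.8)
The key observation is that, by Lemma~\ref{lem:bound1_on_beta}, for every $B\in\mathcal B$ the matrix $M_B(\beta)$ is invertible on the whole interval $(\alpha,1)$, so on this interval $M_B(\beta)^{-1}$ has entries that are rational functions of $\beta$ whose denominator $\det(M_B(\beta))$ has no zero in $(\alpha,1)$. Since $\vect b(\beta)$ is a vector of polynomials in $\beta$, every component of $(\vect x,\vect y)=M_B(\beta)^{-1}\vect b(\beta)$ is a rational function of $\beta$ that is \emph{continuous} (indeed analytic) on $(\alpha,1)$. The feasibility condition for $(B,L,U)$ is that finitely many such rational functions satisfy the inequalities \eqref{eq_wh:slacks}--\eqref{eq_wh:nonnegativity}, i.e.\ that each of the finitely many functions
\[ x_{ij}(\beta),\qquad y_i(\beta),\qquad \beta-\beta^{n-1}-y_i(\beta) \]
is nonnegative. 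So the whole question reduces to the following elementary fact: if $f$ is a rational function with no pole in $(\alpha,1)$, then the truth value of ``$f(\beta)\geq 0$'' is eventually constant as $\beta\to 1^-$.

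First I would make this elementary fact precise. Write $f=p/q$ with $p,q$ polynomials, $q\neq 0$ on $(\alpha,1)$; then $f(\beta)\geq 0$ on a punctured left-neighbourhood of $1$ iff $\mathrm{sign}(f)$ is eventually $\geq 0$, and since $f$ has only finitely many zeros in $(\alpha,1)$ (the zeros of $p$) and is continuous there, $f$ has constant sign on $(\gamma,1)$ for $\gamma$ larger than the largest such zero. Collecting this over all $B\in\mathcal B$, over all choices of the partition $(L,U)$ of $V\setminus(\{1\}\cup Y)$, and over all the finitely many inequality functions attached to each such triple, I take $\gamma^*$ to be the maximum of the finitely many thresholds so obtained, and set $\alpha^*=\max\{\gamma^*,(\alpha+1)/2\}$ say, to ensure $\alpha^*\in(\alpha,1)$. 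For $\beta\geq\alpha^*$ each inequality function has the same sign as at $\alpha^*$ (note $\alpha^*$ itself is $>\gamma^*$, or one can instead phrase the thresholds as strict, taking $\alpha^*$ just above $\gamma^*$), hence $M_B(\beta)^{-1}\vect b(\beta)$ satisfies \eqref{eq_wh:slacks}--\eqref{eq_wh:nonnegativity} iff $M_B(\alpha^*)^{-1}\vect b(\alpha^*)$ does. This is exactly the claimed statement.

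Two small points need care. The first is that the denominators appearing when we invert $M_B(\beta)$ via Cramer's rule are all equal to $\det(M_B(\beta))$, which by Lemma~\ref{lem:bound1_on_beta} is genuinely nonzero throughout $(\alpha,1)$ for $B\in\mathcal B$ — so there is no spurious pole, and continuity on all of $(\alpha,1)$ is legitimate; this is where the previous lemma is used. The second is bookkeeping: one must check that the number of (triple, inequality) pairs under consideration is finite. It is, because $\mathcal B$ is finite, each $B\in\mathcal B$ determines $Y$ and hence a finite set of partitions $(L,U)$, and each resulting system contributes at most $|E|+2(n-1)$ inequality functions. Taking a maximum of finitely many thresholds in $[0,1)$ gives a threshold still in $[0,1)$, and bumping it above $\alpha$ yields the desired $\alpha^*\in(\alpha,1)$.

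**Main obstacle.** There is no real obstacle — the argument is a routine ``finitely many analytic functions are eventually sign-definite'' compactness-type argument. The only thing to be careful about is the logical formulation: we want the \emph{equivalence} "$\beta$-feasible $\iff$ $\alpha^*$-feasible" for all $\beta\ge\alpha^*$, so the thresholds must be chosen strictly below $\alpha^*$ (so that $\alpha^*$ lies in the same constant-sign interval as every larger $\beta$), and one must handle the degenerate case where an inequality function is identically zero (then it is always $\ge 0$, contributing threshold $0$) separately from the case where it has finitely many isolated zeros.
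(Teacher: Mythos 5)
Your proposal is correct and takes essentially the same approach as the paper: both arguments express the components of $M_B(\beta)^{-1}\vect b(\beta)$ as rational functions of $\beta$ whose denominator $\det(M_B(\beta))$ is nonzero on $(\alpha,1)$ by Lemma~\ref{lem:bound1_on_beta}, note that the finitely many relevant sign functions have only finitely many zeros in $(0,1)$, and take $\alpha^*$ past the largest such zero so that all signs are constant on $[\alpha^*,1)$. The only cosmetic difference is that the paper packages the two inequalities for each $y_i$ into a single product $h_i^B g_i^B$ before taking roots, whereas you treat each inequality function separately; you are also slightly more explicit than the paper about quantifying over the finitely many partitions $(L,U)$ compatible with a given $B$, which is a useful clarification since $\vect b(\beta)$ depends on that choice.
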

\begin{proof} 
  For $B=A\cup Y\in\mathcal B$, and let $\beta\in(\alpha,1)$, let $\left(\vect x^B(\beta),\vect
    y^B(\beta)\right)=M_B^{-1}(\beta)\vect b(\beta)$. The components of this vector are rational functions
    of $\beta$ with denominator $\det(M_B(\beta))$, say
    \begin{align*}
      x^B_{ij}(\beta) &= \frac{f^B_{ij}(\beta)}{\det(M_B(\beta))} \text{ for }(i,j)\in A,&
      y^B_{i}(\beta) &= \frac{g^B_{i}(\beta)}{\det(M_B(\beta))}\text{ for } i\in Y.
    \end{align*}
    Let $h_i^B(\beta)= \beta- \beta^{n-1}- g^B_{i}(\beta)/\det(M_B(\beta))$. We define
  \[T_{ij}(B)=\max\{\beta\,:\,0<\beta< 1,\,\det(M_B(\beta))f^B_{ij}(\beta) = 0\}\]
  if $\det(M_B(\beta))f^B_{ij}(\beta)$ is not identically zero, but vanishes for some
  $\beta\in(0,1)$, and $T_{ij}(B)=0$ otherwise. Similarly, we define
  \[K_{i}(B)=\max\{\beta\,:\,0<\beta< 1,\,h_i^B(\beta)g^B_{i}(\beta) = 0\}\]
  if $h_i^B(\beta)g^B_{i}(\beta)$ is not identically zero, but vanishes for some
  $\beta\in(0,1)$, and $K_{i}(B)=0$ otherwise. Let
  \begin{align*} 
  \tau = \max_{B} \max_{(i,j) \in A} T_{ij}(B),&& \kappa= \max_{B} \max_{i \in Y} K_{i}(B),
  \end{align*}
  and $\gamma=\max\{\tau,\kappa\}$. Let $\alpha^*=\gamma +(1-\gamma)/2\in(\alpha,1)$. It follows
  from Lemma~\ref{lem:bound1_on_beta} and $\alpha^*\in(\alpha,1)$, that $M_B(\beta)$ is invertible
  for every $B\in\mathcal B$ and every $\beta\in[\alpha^*,1)$. Furthermore, as
  $\alpha^*>\max\{\tau,\kappa\}$, none of the functions $f^B_{ij}(\cdot)$, $g^B_{i}(\cdot)$,
  $h_i^B(\cdot)$ and $\det(A_B(\cdot))$ changes sign on the interval $[\alpha^*,1)$, and this
  implies that $\alpha^*$ has the claimed property.
\end{proof}
Combining Lemmas~\ref{lem:bound1_on_beta} and~\ref{lem:bound2_on_beta} we obtain the following
result which justifies the subsequent definition.
\begin{proposition}\label{prop:ultimate_bases}
  There exists $\alpha^*\geq\alpha$ such that for all $\beta\in[\alpha^*,1)$, $\Gamma(G,\beta) = \Gamma(G,\alpha^*)$.
\end{proposition}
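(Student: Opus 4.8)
The plan is to assemble Proposition~\ref{prop:ultimate_bases} directly from the two preceding lemmas, with essentially no new computation required. Recall the reformulation given just before Lemma~\ref{lem:bound1_on_beta}: a triple $(B,L,U)$ is a feasible basis for $\whBeta(G)$ if and only if (i) $M_B(\beta)$ is invertible and (ii) the vector $M_B(\beta)^{-1}\vect b(\beta)$ satisfies the bound constraints~\eqref{eq_wh:slacks} and~\eqref{eq_wh:nonnegativity}. The key observation is that condition (i) can only hold for $B\in\mathcal B$ (for $B\notin\mathcal B$ the determinant is identically zero, so $(B,L,U)$ is infeasible for every $\beta$), so membership in $\Gamma(G,\beta)$ is entirely controlled by the finitely many sets $B\in\mathcal B$ together with the partition $(L,U)$ of $V\setminus(\{1\}\cup Y)$.

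First I would take $\alpha^*$ to be the constant produced by Lemma~\ref{lem:bound2_on_beta}; by construction $\alpha^*\in(\alpha,1)$, so in particular $\alpha^*\geq\alpha$ as claimed. Fix $\beta\in[\alpha^*,1)$ and an arbitrary triple $(B,L,U)$ with $\lvert B\rvert=2n$. I would argue that $(B,L,U)\in\Gamma(G,\beta)$ if and only if $(B,L,U)\in\Gamma(G,\alpha^*)$ by splitting into two cases. If $B\notin\mathcal B$, then $\det(M_B(\cdot))\equiv 0$, so the independence condition fails for both $\beta$ and $\alpha^*$, and $(B,L,U)$ belongs to neither set. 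If $B\in\mathcal B$, then by Lemma~\ref{lem:bound1_on_beta} the matrix $M_B$ is invertible at both $\beta$ and $\alpha^*$ (both lie in $(\alpha,1)$), so the independence condition holds in both cases, and Lemma~\ref{lem:bound2_on_beta} states precisely that $M_B(\beta)^{-1}\vect b(\beta)$ satisfies~\eqref{eq_wh:slacks}--\eqref{eq_wh:nonnegativity} if and only if $M_B(\alpha^*)^{-1}\vect b(\alpha^*)$ does. Hence the feasibility condition also agrees, and $(B,L,U)\in\Gamma(G,\beta)\iff(B,L,U)\in\Gamma(G,\alpha^*)$.

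Since $(B,L,U)$ was arbitrary, this shows $\Gamma(G,\beta)=\Gamma(G,\alpha^*)$ for every $\beta\in[\alpha^*,1)$, which is the assertion. The one subtlety worth flagging in the write-up is that the vector $\vect b(\beta)$ depends on $\beta$ \emph{and} on which nodes are in $U$ (because of the substitution $\beta^{n-1}\mapsto\beta$ in~\eqref{eq_wh:flow_bounds} for $i\in U$), so one should be careful that Lemma~\ref{lem:bound2_on_beta} is being applied with the correct right-hand side for the given $(L,U)$; but this is exactly the $\vect b(\beta)$ defined in the setup, so there is no gap. There is no real obstacle here — the proposition is a bookkeeping consequence of the two lemmas — so the proof is short: essentially the two-case argument above, observing that the finitely many relevant data $(B,L,U)$ have their feasibility status pinned down once and for all by $\alpha^*$.
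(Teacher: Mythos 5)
Your proof is correct and follows exactly the paper's intended route: the paper gives no separate argument for Proposition~\ref{prop:ultimate_bases} beyond the phrase ``combining Lemmas~\ref{lem:bound1_on_beta} and~\ref{lem:bound2_on_beta},'' and your two-case bookkeeping (independence pinned down by Lemma~\ref{lem:bound1_on_beta} on $(\alpha,1)$, feasibility pinned down by Lemma~\ref{lem:bound2_on_beta} on $[\alpha^*,1)$, with $B\notin\mathcal B$ trivially infeasible for all $\beta$) is precisely that combination. Your aside about $\vect b(\beta)$ also depending on the partition $(L,U)$ is a legitimate observation about a slight imprecision in the statement of Lemma~\ref{lem:bound2_on_beta}, and you correctly note it causes no gap since the finitely many partitions can be absorbed into the same maximization.
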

\begin{definition}\label{def:ultimate_bases}
  We call the elements of $\Gamma(G,\beta)$ for $\beta\geq\alpha^*$ \emph{ultimate bases for $G$},
  and we denote the set of ultimate bases for $G$ by $\Gamma(G)$.
\end{definition}
Henceforth, we consider ultimate bases of $\whBeta(G)$ and we set
$\beta= 1 - \delta$ where $\delta$ tends to zero. We omit the argument $\beta$ whenever there
is no danger of confusion.  
\subsection{A generalized network flow formulation}\label{subsec:gnf}
In this subsection, we show that the polytope $\whBeta(G)$ can be interpreted as a generalized network flow (GNF)
polytope and we establish results about feasible bases of $\whBeta(G)$ in the GNF setting. We first
recall some definitions and results pertaining to the GNF polytope, see~\cite[Chapter 15]{ahuja1993network} for
details. Let $X=(V(X),E(X))$ be a digraph with capacities $c_{v,w}$ and positive rational multipliers
$\mu_{vw} > 0$ for each arc $(v,w) \in \arcset(X)$, and demands/supply $b_v$ for each node
$v \in \nodeset(X)$. The GNF polytope for this data is defined by the following constraints:
\begin{align*}
\sum_{w\in \outi(v)} x_{vw} - \sum_{w\in \ini(v)} \mu_{wv} x_{wv}& = b_v && v \in V(X),\\
0 \leq x_{vw} &\leq u_{vw}  && (v,w) \in \arcset(X).
\end{align*}
A possible interpretation is that for every unit of flow that enters arc $(v,w)$ in node $v$,
$\mu_{vw}$ units arrive at node~$w$. The polytope $\whBeta(G)$ given
by~\eqref{eq_wh:flow_extraction} through~\eqref{eq_wh:nonnegativity} is a GNF polytope for the
digraph $G'$ obtained from $G$ by splitting each node $i$ into two nodes $v_i$ and $w_i$, replacing
arcs $(i,j)$ by $(w_i,v_j)$, and adding arcs $(v_i,w_i)$ for $i=2,3,\dots,n$. Then $y_i$ is the flow
on arc $(v_i,w_i)$, the multipliers are equal to $\beta$ for all arcs of the form $(w_i,v_j)$, and
equal to $1$ for all arcs of the form $(v_i,w_i)$. More precisely, the digraph
$G'=(\nodeset',\arcset')$ has node set $V'$ and arc set $E'$ given by
\begin{align*}
  \nodeset' &= \{v_i\,:\,i\in \nodeset\}\cup\{w_i\,:\,i\in \nodeset\}, &
  \arcset' &= \{(w_i,v_j)\,:\,(i,j)\in E\}\cup\{(v_i,w_i)\,:\,i\in\nodeset\setminus\{1\}\}.              
\end{align*}
We denote the two parts in the partition of $E'$ as $E'_1$ and $E'_2$, that is,
$E'_1=\{(w_i,v_j)\,:\,(i,j)\in E\}$ and $E'_2=\{(v_i,w_i)\,:\,i\in\nodeset\setminus\{1\}\}$.
This construction of $G'$ is illustrated in Figure~\ref{fig:ex_G_G'}.
\begin{figure}
  \centering
  \begin{subfigure}[b]{.36\textwidth}
    \centering \vspace{1.2cm}
    \begin{tikzpicture}[scale=1.5,every node/.style={draw,circle,outer sep=1pt,inner
        sep=0pt,minimum size=1.5mm,fill=none}]
      \foreach \i in {1,...,4} {
        \node(v\i) at (-90*\i-135:1) [label={-90*\i-135:$\i$}]{};
      }
      \draw[thick,bend right=15,->] (v1) to (v2);
      \draw[thick,bend right=15,->] (v2) to (v3);
      \draw[thick,bend right=15,->] (v3) to (v2);
      \draw[thick,->] (v4)--(v3);
      \draw[thick,bend right=15,->] (v2) to (v1);
      \draw[thick,->] (v1)--(v3);
      \draw[thick,->] (v2)--(v4);
    \end{tikzpicture}
    \caption{}\label{fig:ex_G}
  \end{subfigure}
  \begin{subfigure}[b]{.36\textwidth}
    \centering
    \begin{tikzpicture}[xscale=1.5,every node/.style={draw,circle,outer sep=1pt,inner
        sep=0pt,minimum size=1.5mm,fill=none}]
      \foreach \i in {1,...,4} { \node(v\i) at (-90*\i-135:1.2) [label={-90*\i-135:$v_\i$}]{}; }
      \foreach \i in {1,...,4} { \node(w\i) at (-90*\i-90:1.2) [label={-90*\i-90:$w_\i$}]{}; }
      \draw[thick,->] (w1)--(v2);
      \draw[thick,->] (w2)--(v3);
      \draw[thick,->] (w3)--(v2);
      \draw[thick,->] (w4)--(v3);
      \draw[thick,->] (w2)--(v1);
      \draw[thick,->] (w1)--(v3);
      \draw[thick,->] (w2)--(v4);
      \draw[thick,dashed,->] (v2)--(w2);
      \draw[thick,dashed,->] (v3)--(w3);
      \draw[thick,dashed,->] (v4)--(w4);
    \end{tikzpicture}
    \caption{}\label{fig:ex_G'}
  \end{subfigure}
  \caption{A digraph $G$, and the corresponding digraph $G'$ such that $\whBeta(G)$ is a GNF
    polytope for $G'$. The sets $E'_1$ and $E'_2$ are indicated by solid and dashed lines, respectively.}\label{fig:ex_G_G'}
\end{figure}
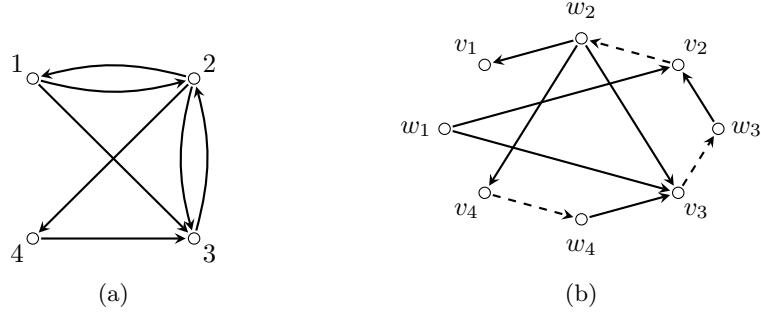
The constraints~\eqref{eq_wh:flow_extraction} to~\eqref{eq_wh:slacks} determine the supply/demand vector $\vect b$ as follows:
\begin{enumerate}
\item Constraint~\eqref{eq_wh:flow_injection} says that $w_1$ has a supply of $1$, that is, $b_{w_1}=1$.
\item Together with~\eqref{eq_wh:flow_extraction}, this implies that $v_1$ has a demand of
  $\beta^{n-1}$, that is, $b_{v_1}=-\beta^{n-1}$.
\item By constraint~\eqref{eq_wh:flow_bounds}, node $w_i$ for $2\leq i\leq n$ has a supply of
  $\beta^{n-1}$, that is $b_{w_i}=\beta^{n-1}$.
\item From~\eqref{eq_wh:flow_bounds} and~\eqref{eq_wh:flow_conservation}, it follows that, for $i=2,3,\dots,n$,
  \[y_i-\beta\sum_{j\in\ini(i)}x_{ji}=-\beta^{n-1},\] hence node $v_i$ has demand $\beta^{n-1}$,
  that is, $b_{v_i}=-\beta^{n-1}$. 
\end{enumerate}
The correspondence between the GNF in $G'$ and the model~\eqref{eq_wh:flow_extraction}
through~\eqref{eq_wh:nonnegativity} is illustrated in Figure~\ref{fig:GNF_conservation}.
To summarize, the vector $\vect b$ is given by
\[b_v=
  \begin{cases}
    1 & \text{for }v=w_1,\\
    \beta^{n-1}& \text{for }v\in\{w_2,w_3,\dots,w_n\},\\
    -\beta^{n-1}& \text{for }v\in\{v_1,v_2,\dots,v_n\}.
  \end{cases}
\]
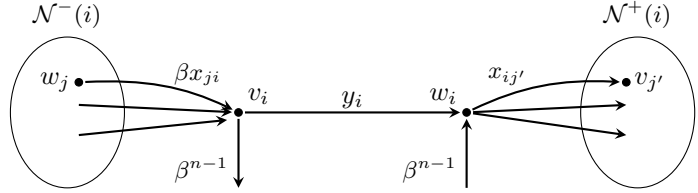
\begin{figure}
	\centering
	\begin{tikzpicture}[xscale=1.5, N/.style={ellipse,draw,fill=none,inner sep=0pt,minimum
            size=1.5mm}, base/.style={draw,fill=black,shape=circle,outer sep=1pt,inner sep=1pt,minimum size=.1cm}]
	\node[base] (v) at (0,0) [label={[label distance=-.1cm]80:$v_i$}] {};
	\node[base] (w) at (2,0) [label={[label distance=-.1cm]100:$w_i$}] {};
        \node[base] (w1) at (-1.4,.4) [label={[label distance=-.1cm]left:$w_j$}] {};
        \node[base] (v1) at (3.4,.4) [label={[label distance=-.1cm]right:$v_{j'}$}]{};
	
	\draw (-1.5,0) ellipse (.5cm and 1cm) node[draw=none,above=1cm,black]{\small $\ini(i)$};
	\draw (3.5,0) ellipse (.5cm and 1cm) node[draw=none,above=1cm,black]{\small $\outi(i)$};
	\draw [thick,->] (v) to node[draw=none,above=-2pt] {$y_i$} (w);
	\draw [thick,->] (-1.4,.1) to (v);
	\draw [thick,bend right=-20,->] (w1) to node[above,near end] {{\small $\beta x_{ji}$}} (v);
	\draw [thick,->] (-1.4,-.3) to (-.1,-.1);
	\draw [thick,->] (w) to (3.4,.1);
	\draw [thick,bend right=-20,->] (w) to node[above,near start] {{\small $x_{ij'}$}} (v1);
	\draw [thick,->] (w) to (3.4,-.3);
        \draw [thick,->] (v) to node[left,near end] {{\small $\beta^{n-1}$}} (0,-1);
        \draw [thick,->] (2,-1) to node[left,near start] {{\small $\beta^{n-1}$}} (w);
	\end{tikzpicture}
        \caption{Flow conservation for a node $i\in V\setminus\{1\}$. The two ovals represent the
        set $\{w_j\,:\,i\in\ini(i)\}$, and $\{v_j\,:\,i\in\outi(i)\}$, respectively. Flow
        conservation in $w_i$ corresponds to~\eqref{eq_wh:flow_bounds}, and flow conservation in
        $v_i$ is~\eqref{eq_wh:flow_conservation} (substituting $y_i+\beta^{n-1}$ for $\sum_{j\in\outi(j)}x_{ij}$).}\label{fig:GNF_conservation}
\end{figure}
The interpretation of $\whBeta(G)$ as a GNF polytope allows us to apply the known results about the
structure of bases for GNF polytopes to $\whBeta(G)$ and identify bases of $\whBeta(G)$ with
subgraphs of $G'$. For this purpose, we introduce some terminology (following \cite[Section
15.3]{ahuja1993network}), and then state the characterization of bases of GNF polytopes in
Theorem~\ref{thm:GNF_bases}. For a cycle $C$ (not necessarily directed) with a given orientation,
let $\overline C$ and $\underline C$ denote the sets of forward an backward arcs in $C$. The
\emph{cycle multiplier} is
\[\mu(C)=\frac{\prod_{(v,w)\in\overline C}\mu_{vw}}{\prod_{(v,w)\in\underline C}\mu_{vw}}.\]
If one unit of flow is sent along $C$, starting at some node $s$, then $\mu(C)$ units return to this
node. A cycle $C$ is called a \emph{breakeven cycle} if $\mu(C)=1$. 
\begin{definition}\label{def:augmented_tree}
  An \emph{augmented tree} is a connected graph with exactly one cycle, called the \emph{extra cycle}. An \emph{augmented forest}
  is a collection of node-disjoint augmented trees.
\end{definition}
\begin{definition}\label{def:good_augmented_tree}
  An augmented tree is called a \emph{good augmented tree} if its extra cycle is not a breakeven
  cycle. An augmented forest as a \emph{good augmented forest} if each of its components is a good
  augmented tree.
\end{definition}
In our specific setting it turns out that the breakeven condition is equivalent to having the same
number of forward and backward arcs.
\begin{definition}\label{def:balanced_cycle}
  An oriented cycle is called \emph{balanced} if it has the same number of forward and backward
  arcs.
\end{definition}
\begin{proposition}\label{prop:balanced_breakeven}
  A cycle $C$ in $G'$ is breakeven if and only if it is balanced.
\end{proposition}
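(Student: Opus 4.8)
The plan is to exploit the bipartite structure of $G'$. Every arc of $E'_1$ runs from a $w$-node to a $v$-node and every arc of $E'_2$ runs from a $v$-node to a $w$-node, so $G'$ is bipartite with parts $\{v_i:i\in V\}$ and $\{w_i:i\in V\}$. Hence any cycle $C$ in $G'$ (not necessarily directed) alternates between $v$-nodes and $w$-nodes; in particular it has even length, say $2\ell$, and once an orientation of $C$ is fixed it has exactly $\ell$ steps of the form ``$v$-node to $w$-node'' and $\ell$ steps of the form ``$w$-node to $v$-node''. Since reversing the orientation of $C$ replaces $\mu(C)$ by $\mu(C)^{-1}$ and swaps forward and backward arcs, it preserves both the breakeven property and the balanced property, so the choice of orientation will be immaterial.

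The crucial step is to determine, for each arc of $C$, whether it is forward or backward purely from its type and the direction of the step in which $C$ traverses it. Because every $E'_1$-arc points from a $w$-node to a $v$-node, such an arc is forward precisely when it is traversed in a ``$w$-to-$v$'' step and backward when traversed in a ``$v$-to-$w$'' step; symmetrically, an $E'_2$-arc is forward precisely when traversed in a ``$v$-to-$w$'' step. I will then let $p$ denote the number of ``$w$-to-$v$'' steps of $C$ that use an $E'_1$-arc and $q$ the number of ``$v$-to-$w$'' steps of $C$ that use an $E'_1$-arc, and put $\sigma=p-q$. The excess of forward over backward $E'_1$-arcs of $C$ is then $p-q=\sigma$. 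Since each of the two step directions occurs $\ell$ times, $C$ uses $\ell-p$ many $E'_2$-arcs in ``$w$-to-$v$'' steps and $\ell-q$ many $E'_2$-arcs in ``$v$-to-$w$'' steps, so the excess of forward over backward $E'_2$-arcs equals $(\ell-q)-(\ell-p)=\sigma$ as well.

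With this identity the two claims become immediate. The multiplier is $\beta$ on every arc of $E'_1$ and $1$ on every arc of $E'_2$, so only the $E'_1$-arcs contribute to $\mu(C)$ and $\mu(C)=\beta^{\sigma}$; since $0<\beta<1$, this equals $1$ exactly when $\sigma=0$, so $C$ is breakeven if and only if $\sigma=0$. On the other hand the total excess of forward over backward arcs over all of $C$ is the sum of the two contributions, namely $\sigma+\sigma=2\sigma$, which vanishes exactly when $\sigma=0$; so $C$ is balanced if and only if $\sigma=0$. Combining these two equivalences proves the proposition.

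I expect the only genuine obstacle to be spotting that the forward/backward balance restricted to $E'_1$ coincides with the one restricted to $E'_2$ (both equal $\sigma$), rather than the $E'_2$-arcs introducing a term that might offset the $E'_1$-part; once the four sign cases above are tabulated, this is exactly the fact that a cycle in a bipartite graph takes as many steps in one direction across the bipartition as in the other. Everything else is routine bookkeeping with the arc multipliers.
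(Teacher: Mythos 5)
Your argument is correct. Both you and the paper reduce the claim to the observation that, within a cycle $C$ of $G'$, the excess of forward over backward arcs among the $E'_1$-arcs equals the excess among the $E'_2$-arcs; once that is in hand, $\mu(C)=\beta^{\sigma}$ together with ``balanced $\Leftrightarrow$ total excess $2\sigma=0$'' gives the equivalence. Where you differ from the paper is in how this equality of excesses is proved. The paper deletes the $E'_2$-arcs of $C$, studies the resulting $E'_1$-paths $P_1,\dots,P_k$, argues that each $P_i$ contributes $d_i\in\{-1,0,1\}$ depending on whether its first and last arcs are both forward, both backward, or mixed, and then sums $d_1+\cdots+d_k$. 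Your proof instead makes the bipartition $\{v_i\}\cup\{w_i\}$ explicit and double-counts the $\ell$ ``$w$-to-$v$'' steps and $\ell$ ``$v$-to-$w$'' steps of the oriented cycle, reading off both excesses as $p-q$ directly. This global counting avoids the path decomposition and the slightly delicate bookkeeping of the $d_i$'s (which relies on the cyclic identity that transitions $F\to B$ equal transitions $B\to F$), so it is arguably more transparent; the paper's version, on the other hand, stays closer to the ``extra cycle / augmented tree'' vocabulary used elsewhere in the section. Both are sound, and the computational content (the identity of the two excesses) is the same.
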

\begin{proof}
  Let $C$ be a cycle in $G'$, and let $C_1$ and $C_2$ denote the set of arcs of $C$ in
  $E'_1$ and $E'_2$, respectively. As the multipliers of the arcs in $C_1$ are all equal to $\beta$
  and the multipliers of the arcs in $C_2$ are all equal to $1$, we have $\mu(C)=\beta^d$, where $d$
  is the difference between the number of forward arcs and the number of backward arcs in $C_1$. As
  a consequence, $C$ is a breakeven cycle if and only if $C_1$ contains the same number of forward
  and backward arcs. Let $e_1,e_2,\dots,e_k$ be the elements of $C_2$ in the order in which they are
  traversed by $C$, and let $P_1,\dots,P_k$ be the paths into which $C$ is cut by deleting the arcs
  in $C_2$. More precisely, $P_i$ is the path from $e_i$ to $e_{i+1}$ for $i=1,\dots,k-1$, and $P_k$
  is the path from $e_k$ to $e_1$. Let $d_i$ be the difference between the number of forward and
  backward arcs on $P_i$. If $P_i$ starts and ends at a forward arc then $d_i=1$, if it starts and
  ends at a backward arc, then $d_i=-1$, and otherwise $d_i=0$. Now we conclude, that $C$ is a
  breakeven cycle if and only if $C_1$ contains the same number of forward and backward arcs if and
  only if $d_1+\dots+d_k=0$ if and only if $C_2$ contains the same number of forward and backward
  arcs if and only if $C$ is balanced.
\end{proof}
From Proposition~\ref{prop:balanced_breakeven} we deduce that an augmented forest in $G'$ is good if
and only if none of its extra cycles is balanced. This is illustrated in
Figure~\ref{fig:augmented_tree_forest} where~(\subref{fig:augmented_tree})
and~(\subref{fig:augmented_forest}) depict an augmented tree and an augmented forest, respectively,
for the digraph $G'$ corresponding to the digraph $G$ in~(\subref{fig:graph_augmented}). The tree in
Figure~\ref{fig:augmented_tree_forest}(\subref{fig:augmented_tree}) is not a good augmented tree
because the extra cycle is balanced. The forest in
Figure~\ref{fig:augmented_tree_forest}(\subref{fig:augmented_forest}) is a good augmented forest as
the extra cycles are not balanced.
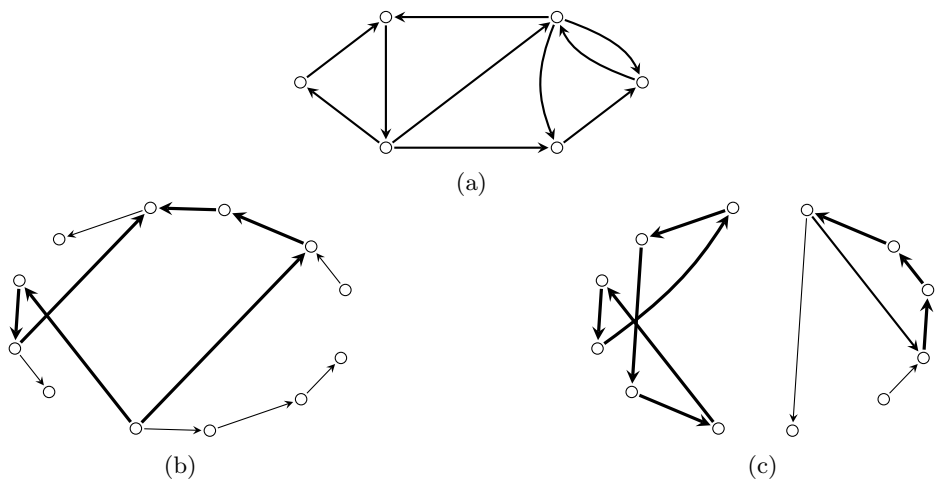
\begin{figure}
\centering
\begin{subfigure}[b]{\textwidth}
	\centering 
	\begin{tikzpicture}[xscale=2.25,every node/.style={draw,circle,outer sep=1pt,inner
            sep=0pt,minimum size=1.5mm,fill=none}]
	\foreach \i in{1,...,6} {
	\node(v\i) at (60*\i:1) {};
	} 
	
	\draw[->,thick] (v1) to (v2);
	\draw[->,thick] (v3) to (v2);
	\draw[->,thick] (v4) to (v3);
	\draw[->,thick] (v4) to (v5);
	\draw[->,thick] (v5) to (v6);
	\draw[->,thick, bend left=20] (v6) to (v1);
	\draw[->,thick, bend right=10] (v1) to (v5);
	\draw[->,thick] (v4) to (v1);
	\draw[->,thick] (v2) to (v4);
	\draw[->,thick, bend left=20] (v1) to (v6);
	\end{tikzpicture}
	\caption{}\label{fig:graph_augmented}
\end{subfigure}
\begin{subfigure}[b]{.45\textwidth}
	\centering 
	\begin{tikzpicture}[xscale=2.25,yscale=1.5,every node/.style={draw,circle,outer sep=1pt,inner
            sep=0pt,minimum size=1.5mm,fill=none}]          
          \foreach \i in{1,...,6} {
            \node(v\i) at (60*\i-20:1) {};
            \node(w\i) at (60*\i+15:1) {};
          }	
	\draw[->,very thick] (w1) to (v2);
	\draw[->,very thick] (w3) to (v2);
	\draw[->,very thick] (w4) to (v3);
	\draw[->] (w4) to (v5);
	\draw[->] (w6) to (v1);
	\draw[->,very thick] (w4) to (v1);
        \draw[->,very thick] (v1) to (w1);
        \draw[->] (v2) to (w2);
        \draw[->] (v5) to (w5);
        \draw[->,very thick] (v3) to (w3);
        \draw[->] (w3) -- (v4);
        \draw[->] (w5) -- (v6);
	\end{tikzpicture}
	\caption{}\label{fig:augmented_tree}
\end{subfigure}
\begin{subfigure}[b]{.45\textwidth}
	\centering 
	\begin{tikzpicture}[xscale=2.25,yscale=1.5,every node/.style={draw,circle,outer sep=1pt,inner sep=0pt,minimum size=1.5mm,fill=none}]
	\foreach \i in{1,...,6} {
            \node(v\i) at (60*\i-20:1) {};
            \node(w\i) at (60*\i+15:1) {};
          }	
	
	\draw[->,very thick,bend left=-10] (w3) to (v2);
	\draw[->,very thick] (w4) to (v3);
	\draw[->] (w5) to (v6);
        \draw[->] (w1) to (v5);
	\draw[->,very thick, bend left=0] (w6) to (v1);
	\draw[->,very thick] (w2) to (v4);
	\draw[->,thick, bend left=0] (w1) to (v6);
        \draw[->,very thick] (v2) to (w2);
        \draw[->,very thick] (v3) to (w3);
        \draw[->,very thick] (v4) to (w4);
        \draw[->,very thick] (v1) to (w1);
        \draw[->,very thick] (v6) to (w6);
	\end{tikzpicture}
	\caption{}\label{fig:augmented_forest}
\end{subfigure}
\caption{A digraph $G$ and two augmented forests in the corresponding digraph $G'$. The extra cycles
  are indicated by thick lines. The augmented tree (\subref{fig:augmented_tree}) is not good because
  its extra cycle is balanced. The augmented forest (\subref{fig:augmented_forest}) is good because
  none of the two extra cycles is balanced.}\label{fig:augmented_tree_forest}
\end{figure}

\begin{theorem}[Section 15.5 in~\cite{ahuja1993network}]\label{thm:GNF_bases}
  Let $X$ be a directed graph. For a partition $\arcset(X)=B\cup L\cup U$, the triple $(B,L,U)$ is a
  basis for a GNF polytope with underlying digraph $X$ if and only if $B$ is a good augmented forest
  which spans all the nodes of $X$.
\end{theorem}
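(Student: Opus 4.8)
The plan is to reduce Theorem~\ref{thm:GNF_bases} to a linear‑algebra statement about the \emph{generalized incidence matrix} $N$ of $X$, whose rows are indexed by $\nodeset(X)$ and whose column for an arc $(v,w)$ is $\vect e_v-\mu_{vw}\vect e_w$, and then to prove that statement by a leaf‑pruning argument. By definition, a triple $(B,L,U)$ with $B\cup L\cup U=\arcset(X)$ is a basis exactly when the columns of $N$ indexed by $B$ form a basis of the column space of $N$; since $N$ has rank $\abs{\nodeset(X)}$ whenever every connected component of $X$ contains a non‑breakeven cycle (the only interesting case, and in particular the case $X=G'$), this is equivalent to: $\abs{B}=\abs{\nodeset(X)}$ and the columns indexed by $B$ are linearly independent. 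Thus everything reduces to deciding which $\abs{\nodeset(X)}$‑subsets of arcs give linearly independent columns, and I will use throughout that a linear dependence among the columns of an arc set $F$ is the same as a nonzero vector $\lambda\in\reals^{F}$ with $\sum_{(v,w)\in F}\lambda_{vw}(\vect e_v-\mu_{vw}\vect e_w)=\vect 0$, i.e.\ a real‑valued generalized circulation supported on $F$.

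For the ``if'' direction, assume $B$ is a good augmented forest spanning $\nodeset(X)$. Each component on $n_i$ nodes is an augmented tree, hence has exactly $n_i$ arcs, so $\abs{B}=\sum_i n_i=\abs{\nodeset(X)}$ and only linear independence remains. As the components have disjoint node sets, $N$ restricted to $B$ is block diagonal, so it suffices to treat one good augmented tree $T$ on node set $S$ and show the $S\times E(T)$ block is nonsingular. Given a dependence $\lambda$, repeatedly pick a leaf $v$ of the current tree: it cannot lie on the extra cycle $C$ (whose nodes all have degree at least two), and the row‑$v$ equation involves only the unique arc incident to $v$, forcing its $\lambda$‑value to $0$; delete that arc and iterate until only $C$ remains. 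On $C$ the balance equation at each (degree‑two) node expresses $\lambda$ on one incident cycle‑arc as a nonzero multiple — a ratio of arc multipliers — of $\lambda$ on the other, and composing these relations once around $C$ yields $\lambda_{e_0}=\mu(C)\,\lambda_{e_0}$ for a fixed arc $e_0\in C$ (after orienting $C$ suitably). Since $C$ is not breakeven, $\mu(C)\neq 1$, hence $\lambda_{e_0}=0$ and then all cycle values vanish; thus $\lambda=\vect 0$, so $B$ is a basis.

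For the ``only if'' direction, let $(B,L,U)$ be a basis, so $\abs{B}=\abs{\nodeset(X)}$ and the columns of $B$ are independent. Let $H=(\nodeset(X),B)$ have components $H_1,\dots,H_r$ with $\abs{\nodeset(H_i)}=n_i$ and $\abs{\arcset(H_i)}=m_i$. If $m_i>n_i$ for some $i$, then $E(H_i)$ gives $m_i>n_i$ columns supported in the $n_i$ rows $\nodeset(H_i)$, so they are dependent — a contradiction; hence $m_i\le n_i$ for all $i$. Since $\sum_i(m_i-n_i)=\abs{B}-\abs{\nodeset(X)}=0$, we get $m_i=n_i$ for every $i$ (in particular $H$ has no isolated node, so it spans $\nodeset(X)$), and a connected graph with as many arcs as nodes has exactly one cycle, i.e.\ is an augmented tree. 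Finally, if the extra cycle $C$ of some $H_i$ were breakeven, then by the cycle computation of the previous paragraph there is a nonzero generalized circulation supported on $E(C)\subseteq B$, contradicting independence; hence every extra cycle is non‑breakeven, so $B$ is a good augmented forest spanning $\nodeset(X)$.

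I expect the only genuine work to be the linear‑algebraic core of the ``if'' direction: the leaf‑pruning reduction together with the identity that the factor accumulated in going once around a cycle $C$ equals $\mu(C)$, so that the $E(C)$‑block is singular precisely when $C$ is breakeven. This is exactly where the multipliers matter — in ordinary network flow, total unimodularity of the incidence matrix makes bases coincide with spanning trees, whereas here one must track the products $\prod\mu_{vw}$ and the breakeven/non‑breakeven dichotomy is the nonsingularity criterion for the extra cycle. The main care needed is bookkeeping the arc orientations relative to the chosen orientation of the (undirected) cycle when deriving $\lambda_{e_0}=\mu(C)\,\lambda_{e_0}$; the edge‑counting in the ``only if'' direction is routine.
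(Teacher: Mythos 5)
The paper does not actually prove Theorem~\ref{thm:GNF_bases}: it is quoted from Section~15.5 of the cited network-flows textbook, so there is no in-paper argument to compare against. Your proof is a correct, self-contained reconstruction of the standard linear-algebraic argument behind that reference: reduce the basis condition to nonsingularity of the $\nodeset(X)\times B$ block of the generalized incidence matrix; prune leaves until only the extra cycle $C$ is left; and observe that the factor accumulated by chasing a dependence once around $C$ is exactly $\mu(C)$, so the $\arcset(C)$-block is singular precisely when $C$ is breakeven.

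Two small points are worth making explicit. First, the leaf-pruning step needs the fact that a good augmented tree $T$ with $T\neq C$ has a degree-one node off $C$; this holds because $T-\arcset(C)$ is a forest whose components each contain exactly one $C$-node, and any component with at least two nodes supplies a leaf that is not a $C$-node, so the induction really does terminate with only $C$ remaining. Second, when you compose the nodewise relations around $C$, the sign in $\lambda_{e_0}=\pm\,\mu(C)\lambda_{e_0}$ is necessarily $+1$, since each transition between consecutive cycle arcs contributes a factor $-1$ exactly when the two arcs have opposite orientations relative to the traversal, and the number of such orientation switches around a closed cycle is even; so the argument does give $\lambda_{e_0}=\mu(C)\lambda_{e_0}$ cleanly. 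The remaining caveat you already flag — that the generalized incidence matrix must have full row rank $\abs{\nodeset(X)}$, equivalently every component of $X$ must contain a non-breakeven cycle — is the same implicit hypothesis carried by the textbook statement, and is satisfied in the paper's application $X=G'$. In short, the proposal is correct and supplies, by essentially the textbook route, the proof the paper delegates to its reference.
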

 
We denote the GNF interpretation of the polytope $\whBeta(G)$ by $\nfBeta(G')$. In
Theorem~\ref{thm:augmented_tree_bases} we strengthen Theorem~\ref{thm:GNF_bases} for the
polytope $\nfBeta(G')$ by showing that in order to obtain a \emph{feasible} basis, the augmented
forest has to be connected, that is, it has to be a good augmented tree.
\begin{theorem}\label{thm:augmented_tree_bases}
  If $(B',L',U')$ is a feasible basis for $\nfBeta(G')$, then $B'$ is a good augmented tree.
\end{theorem}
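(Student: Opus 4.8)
The plan is to argue by contradiction. Suppose $(B',L',U')$ is a feasible basis for $\nfBeta(G')$ for which $B'$ is \emph{disconnected}. By Theorem~\ref{thm:GNF_bases}, $B'$ is a good augmented forest spanning $V'$, so each connected component $T$ of $B'$, being a good augmented tree on its node set $S$, carries exactly $|S|$ arcs. The goal is to show that disconnectedness forces the component $T_0$ containing $w_1$ to consist of exactly four nodes, two of which are $w_1$ and $v_1$, and then to observe that such a component cannot occur in $G'$.

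The first step is a flow-balance identity for each component $T$. Write $P(T)$ and $Q(T)$ for the total flow carried by the arcs of $T$ lying in $E'_1$ and in $E'_2$, respectively, and sum the flow-conservation equations of $\nfBeta(G')$ over all nodes of $S$, doing this separately over the $w$-nodes and over the $v$-nodes of $S$. Since no arc of $B'$ crosses between two components, a boundary-crossing arc lies in $L'\cup U'$ and hence either carries no flow or is an $E'_2$-arc at value $\beta-\beta^{n-1}$; this gives two linear relations among $P(T)$, $Q(T)$, the numbers of $w$- and $v$-nodes in $S$, and the numbers of indices $i$ with $(v_i,w_i)\in U'$ and $w_i\in S$ (resp.\ $v_i\in S$). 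Because the wedge constraints~\eqref{wedge_constraints} force $x_{ij}\le\beta<1$ for every arc, $P(T)$ is bounded independently of $\beta$; dividing the sum of the two relations by $1-\beta$ and letting $\beta\to 1$ shows that $S$ contains equally many $w$-nodes as $v$-nodes, say $m(T)$ of each. Since $G'$ has no parallel arcs, a $2$-node component cannot be an augmented tree, so $m(T)\ge 2$ and $|S|\ge 4$ for every component.

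Feeding $\#w\text{-nodes}=\#v\text{-nodes}$ back into the two relations makes $P(T)$ an explicit rational function of $\beta$; combining this with $0\le Q(T)\le(\beta-\beta^{n-1})\,q(T)$, where $q(T)$ is the bounded number of $E'_2$-arcs of $B'$ inside $T$ and $\beta-\beta^{n-1}\to 0$, and letting $\beta\to 1$ yields, for every component,
\[
  m(T)=(n-1)\,\varepsilon_w+\varepsilon_v-(n-2)\,\sigma(T),
\]
where $\varepsilon_w,\varepsilon_v\in\{0,1\}$ indicate whether $w_1\in S$ and $v_1\in S$, and $\sigma(T)$ is the number of indices $i$ with $(v_i,w_i)\in U'$ and $v_i\in S$, minus the number of those with $w_i\in S$. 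Since $B'$ is disconnected and each component has $m(\cdot)\ge 2$, each component also satisfies $m(T)\le n-2$, and in particular $n\ge 4$. Checking the four cases for $(\varepsilon_w,\varepsilon_v)$ against $2\le m(T)\le n-2$ leaves only one possibility: $w_1$ and $v_1$ lie in the same component $T_0$, with $\sigma(T_0)=1$ and $m(T_0)=2$, so $T_0$ has exactly four nodes. These are $w_1$, $v_1$, a further node $w_p$ and a further node $v_q$. Since $G'$ is bipartite with parts $\{w_i\}$ and $\{v_i\}$, there is no arc between $w_1$ and $w_p$ or between $v_1$ and $v_q$; and there is no arc between $w_1$ and $v_1$, because $(w_1,v_1)\in E'_1$ would require a loop at $1$ in $G$ and $(v_1,w_1)\notin E'_2=\{(v_i,w_i):i\ne 1\}$. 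Hence $T_0$ contains at most one arc in each of the pairs $\{w_1,v_q\}$, $\{w_p,v_1\}$, $\{w_p,v_q\}$ and none elsewhere --- at most three arcs in all --- contradicting that an augmented tree on four nodes has four arcs. Therefore $B'$ is connected, i.e.\ a good augmented tree.

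I expect the main obstacle to be the bookkeeping in the flow-balance computation and its asymptotics: one must correctly account for $U'$-arcs that join two different components, for the fact that the supply/demand data attached to $v_1$ (and to $w_1$) differ slightly from those attached to the other split nodes, and for extracting the correct-order term as $\beta\to 1$ so as to obtain the exact identity for $m(T)$ rather than merely its leading behaviour. Once that identity is in hand, the remainder is elementary case analysis together with the simple structural observation about $G'$.
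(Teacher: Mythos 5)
Your proposal is correct, and it takes a genuinely different route than the paper's proof. Both start from the same move: sum the flow-balance constraints of $\nfBeta(G')$ over each component $T$, separately over $w$-nodes and over $v$-nodes, and take $\beta\to 1$ asymptotics; both also derive the balance $\#w\text{-nodes}=\#v\text{-nodes}$ in every component (your $m_w=m_v$, the paper's Lemma~\ref{lem:equal_V_kl}). Where you diverge is in what you squeeze out of the first-order asymptotics. The paper packages this as a family of per-component \emph{inequalities} in the auxiliary quantities $r_k,s_k,q_k,z_k$ (Lemma~\ref{lem:rsqz}) and then chains them to conclude $V_1=\{1\}$, $r_1=1$, contradicting Lemma~\ref{lem:nonempty_Vij}. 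You instead extract an \emph{exact} integer identity $m(T)=(n-1)\varepsilon_w+\varepsilon_v-(n-2)\sigma(T)$ for every component, which makes the case analysis on $(\varepsilon_w,\varepsilon_v)$ essentially mechanical: $(1,0)$ and $(0,1)$ are infeasible, so $w_1$ and $v_1$ share a component $T_0$ with $m(T_0)=2$. Your closing step --- between the four nodes $\{w_1,v_1,w_p,v_q\}$ the graph $G'$ admits at most three arcs, while an augmented tree on four nodes needs four --- is an elementary replacement for the paper's appeal to Lemma~\ref{lem:nonempty_Vij}, although both arguments in fact land on the same four-node component. One point worth recording: the $+\varepsilon_v$ term in your identity shows you used the correct supply/demand value $b_{v_1}=-\beta^n$; the value $-\beta^{n-1}$ displayed in the paper's enumerated item (2) of Section~\ref{subsec:gnf} is a slip, as one can verify directly from \eqref{eq_wh:flow_extraction} and \eqref{eq_wh:flow_injection} or by summing all GNF constraints against Lemma~\ref{lem:total_outflow}. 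Your identity approach buys a shorter and more transparent endgame; the paper's inequality formulation is more hands-on but does not require phrasing the conclusion as an exact equation.
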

This will be proved by showing that the assumption of more than one connected components leads to a
contradiction. For this purpose we need some preliminary results which are stated in
Lemmas~\ref{lem:nonempty_Vij}--\ref{lem:rsqz}. For the rest of this subsection, we assume that $(B',L',U')$ is a basis for
$\nfBeta(G')$. In particular, $B'$ is a good augmented forest, and we assume it has connected
components $T_1,\dots,T_m$. We shall use $T_k$ to denote both the arc set and the node set of the
good augmented tree $T_k$. The intended meaning will be clear from the context. Without loss of
generality, $w_1\in T_1$. For convenience, we set
\begin{align*}
  V_k &= \{i\in V\,:\,v_i\in T_k\text{ and }w_i\in T_k\},\\
  V^-_k &= \{i\in V\,:\,v_i\not\in T_k\text{ and }w_i\in T_k\},\\
  V^+_k &= \{i\in V\,:\,v_i\in T_k\text{ and }w_i\not\in T_k\}.
\end{align*}
Thus, $V_k$ correspond to the arcs in $E'_2$ which have both endpoints in $T_k$, $V^-_k$ corresponds
to the arcs in $E'_2$ which enter $T_k$, and $V^+_k$ corresponds to the arcs in $E'_2$ which leave
$T_k$. The next lemma states that every component $T_k$ must contain both $v_i$ and $w_i$ for some
$i\neq 1$.
\begin{lemma}\label{lem:nonempty_Vij}
  For every $k\in\{1,\dots,m\}$, the extra cycle in $T_k$ contains an arc from $E'_2$. In
  particular, $V_{1}\setminus\{1\}\neq\emptyset$ and $V_{k}\neq\emptyset$, for
  $k\in \{2,\dotsc,m\}$.
\end{lemma}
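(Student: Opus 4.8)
The plan is to derive a contradiction from the assumption that the extra cycle of some component $T_k$ avoids $E'_2$ entirely: I will show that such a cycle would be balanced, hence breakeven, contradicting the fact that $T_k$ is a \emph{good} augmented tree.

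First I would fix $k\in\{1,\dots,m\}$. Since $(B',L',U')$ is a basis for $\nfBeta(G')$, Theorem~\ref{thm:GNF_bases} guarantees that $T_k$ is a good augmented tree, so its extra cycle $C$ is not a breakeven cycle; by Proposition~\ref{prop:balanced_breakeven}, $C$ is therefore not balanced. The key structural observation is that the digraph $(V',E'_1)$ is bipartite with parts $\{w_i\,:\,i\in V\}$ and $\{v_i\,:\,i\in V\}$, and that every arc of $E'_1$ is directed from the first part to the second. Suppose, for contradiction, that every arc of $C$ lies in $E'_1$. Then $C$ is a cycle in this bipartite digraph, so it alternates between $w$-nodes and $v$-nodes and has even length. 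Traversing $C$ starting from a $w$-node, each step from a $w$-node to the following ($v$-)node uses an arc of $E'_1$ in the forward direction, while each step from a $v$-node to the following ($w$-)node uses an arc of $E'_1$ in the backward direction; hence forward and backward arcs strictly alternate along $C$, so $C$ has equally many forward and backward arcs and is balanced — a contradiction. Therefore $C$ contains an arc of $E'_2$, say $(v_i,w_i)$ with $i\in V\setminus\{1\}$.

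It remains to translate this into the stated claims about the sets $V_k$. As $C\subseteq T_k$, the arc $(v_i,w_i)$ lies in $T_k$, so both $v_i$ and $w_i$ belong to $T_k$; since $i\neq 1$, this gives $i\in V_k\setminus\{1\}$, so in particular $V_k\neq\emptyset$. When $k\geq 2$, the component $T_k$ is disjoint from $T_1$, which contains $w_1$, so $w_1\notin T_k$ and hence $1\notin V_k$; thus $V_k=V_k\setminus\{1\}\neq\emptyset$. For $k=1$ the argument yields $V_1\setminus\{1\}\neq\emptyset$, which completes the proof.

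None of the steps is technically heavy; the one requiring care is the bipartiteness-plus-orientation argument that forces the strict forward/backward alternation along any cycle contained in $E'_1$ — everything else is bookkeeping about which of $v_i$, $w_i$ lie in which component.
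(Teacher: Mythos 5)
Your proof is correct and follows essentially the same approach as the paper's: assume the extra cycle $C$ of $T_k$ lies entirely in $E'_1$, observe that then every $w$-node in $C$ only has outgoing arcs and every $v$-node only has incoming arcs, conclude $C$ is balanced, and contradict the fact that a good augmented tree has no balanced (breakeven) extra cycle. Your version merely spells out the bipartite/orientation argument in more detail and makes the final bookkeeping explicit, but the idea and the invoked facts (Theorem~\ref{thm:GNF_bases} and Proposition~\ref{prop:balanced_breakeven}) are the same.
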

\begin{proof}
  Fix $k\in \{1,\dots,m\}$, let $C$ be the extra cycle in $T_k$, and assume that every arc in
  $C$ is in $E'_1$. Thus, in $C$ the nodes $w_i$ only have outgoing arcs and the nodes $v_i$
  only have incoming arcs. This implies that $C$ is balanced which is a contradiction.
\end{proof}
In the next lemma we analyze the interaction between components. The arcs in $E'_1$ can carry flow
only if they correspond to basic variables. As a consequence, the flow transfer between connected
components is only through arcs in $E'_2$, that is, arcs of the form $(v_i,w_i)$. 
\begin{lemma}\label{lem:total_incoming_flow} 
\begin{align}
  (1-\beta) \sum_{j\in V_{k}} \Phi(j) &= \beta\sum_{j \in V^-_k} \Phi(j) -
  \sum_{j\in V^+_k} \Phi(j)&&k=2,\dots,m,\label{eq:sum_of_flows_T2} \\
  (1-\beta) \sum_{j\in V_1} \Phi(j) &= 1-\beta^n+ \beta\sum_{j \in V^-_1} \Phi(j) -
                                         \sum_{j \in V^+_1} \Phi(j).\label{eq:sum_of_flows_T1}
\end{align}
\end{lemma}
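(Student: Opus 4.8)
The plan is to prove both identities in one stroke by summing the generalized network flow conservation equations of $\nfBeta(G')$ over \emph{all} nodes of the component $T_k$, and then rewriting the resulting balance in terms of the inflows $\Phi(\cdot)$.

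First I would record the structural facts that make the summation telescope cleanly. Since $(B',L',U')$ is a basis, Theorem~\ref{thm:GNF_bases} tells us $B'$ is an augmented forest, so every arc of $G'$ joining two distinct components is non-basic. An arc of $E'_1$ has lower bound $0$ and no finite upper bound, so a non-basic arc of $E'_1$ carries flow $0$; hence the only flow crossing between components is carried by the arcs of $E'_2$ of the form $(v_i,w_i)$ with $i\in V^-_k\cup V^+_k$, and the flow on such an arc is exactly $y_i$. I would also record the two algebraic facts that follow from the equality constraints alone, and hence hold for the basic solution of \emph{any} basis: $y_i=\beta\Phi(i)-\beta^{n-1}$ for $i\in V\setminus\{1\}$ (eliminate $\Psi(i)$ between~\eqref{eq_wh:flow_conservation} and~\eqref{eq_wh:flow_bounds}), and $\Phi(1)=\beta^{n-1}$ (combine~\eqref{eq_wh:flow_injection} and~\eqref{eq_wh:flow_extraction}).

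Now sum the conservation equation over $v\in T_k$. On the left, an arc internal to $T_k$ contributes its flow times $1-\mu$: the $E'_2$ arcs (multiplier $1$) contribute nothing, and each internal $E'_1$ arc $(w_i,v_j)$ contributes $(1-\beta)x_{ij}$; since $E'_1$ arcs entering $T_k$ from another component carry flow $0$, the total internal $E'_1$ flow equals $\sum_{j:\,v_j\in T_k}\Phi(j)=\sum_{j\in V_k}\Phi(j)+\sum_{j\in V^+_k}\Phi(j)$. The arcs of $E'_2$ leaving $T_k$ contribute $+\sum_{i\in V^+_k}y_i$ and those entering contribute $-\sum_{i\in V^-_k}y_i$. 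On the right we obtain $\sum_{v\in T_k}b_v$, which I would evaluate from the supply/demand vector: for $k\geq 2$ every $w_i\in T_k$ has $b_{w_i}=\beta^{n-1}$ and every $v_i\in T_k$ has $b_{v_i}=-\beta^{n-1}$, while for $k=1$ the node $w_1$ contributes $1$ in place of $\beta^{n-1}$ (and $v_1$, if it lies in $T_1$, contributes its own value, while the absent arc $(v_1,w_1)$ contributes nothing to the crossing terms). Substituting $y_i=\beta\Phi(i)-\beta^{n-1}$ and $\Phi(1)=\beta^{n-1}$ and collecting terms, all contributions involving $\beta^{n-1}$ and $\beta^{n}$ cancel between the two sides, leaving $(1-\beta)\sum_{j\in V_k}\Phi(j)=\beta\sum_{j\in V^-_k}\Phi(j)-\sum_{j\in V^+_k}\Phi(j)$ for $k\geq 2$, and the same identity with the extra summand $1-\beta^n$ arising from $b_{w_1}=1$ for $k=1$.

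The step needing the most care is the bookkeeping around node $1$: because the arc $(v_1,w_1)$ does not exist, one must track carefully whether $1$ lies in $V^-_1$ or in $V^+_1$, and $\Phi(1)=\beta^{n-1}$ is the one value not of the form $\beta^{n-2}$ or $1$ that the bound on $y_i$ forces on the other $\Phi(i)$. Checking, in each of the cases $v_1\in T_1$ and $v_1\notin T_1$, that the surplus at $w_1$ and the deficit at $v_1$ combine with this anomalous value of $\Phi(1)$ to produce exactly the constant $1-\beta^n$ and nothing else is the only non-routine computation; everything else is the telescoping sum described above, and the argument uses only that $(B',L',U')$ is a basis, not that it is feasible.
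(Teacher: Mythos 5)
Your route is genuinely different from the paper's: you sum the GNF conservation equations of $\nfBeta(G')$ over all nodes of $T_k$ and use that non-basic $E'_1$-arcs carry zero flow, whereas the paper never touches the GNF constraints or the vector $\vect b$ at all. Instead the paper sums the original constraints~\eqref{eq_wh:flow_extraction}--\eqref{eq_wh:flow_conservation} over $i\in V_k\cup V^-_k$, which in the $G'$ picture amounts to summing conservation over $\{w_i,v_i\,:\,i\in V_k\cup V^-_k\}$ rather than over $T_k$. That choice of index set keeps $v_i$ and $w_i$ paired, so the missing arc $(v_1,w_1)$ and the anomalous value $\Phi(1)$ never have to be tracked; it also works without ever computing $\vect b$. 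Your choice is conceptually cleaner (the left side is a literal component-level balance) but pushes the work into bookkeeping around node~$1$.

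That bookkeeping is where your sketch has two concrete gaps. First, you confine the case analysis to $k=1$, but your stated rule for $k\geq 2$ (``every $v_i\in T_k$ has $b_{v_i}=-\beta^{n-1}$'') is false precisely when $v_1\in T_k$, i.e.\ when $1\in V^+_k$, which the paper's own proof of Theorem~\ref{thm:augmented_tree_bases} tracks via $z_k$ and which does occur. In that case the $v_1$ summand is different and the arcs leaving $T_k$ are indexed by $V^+_k\setminus\{1\}$ rather than $V^+_k$; one must check, parallel to your $k=1$ computation, that the two anomalies cancel against $\Phi(1)=\beta^{n-1}$ to leave~\eqref{eq:sum_of_flows_T2} unchanged (they do). Second, the value $b_{v_1}=-\beta^{n-1}$ stated in Section~\ref{subsec:gnf} is in fact an error in the paper: GNF conservation at $v_1$ reads $-\beta\Phi(1)=b_{v_1}$ with $\Phi(1)=\beta^{n-1}$, so $b_{v_1}=-\beta^n$, which is also what makes $\sum_{v\in V'}b_v=(1-\beta)\sum_{(i,j)\in E}x_{ij}=1-\beta^n$ balance. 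If you take the printed value at face value, your $k=1$ identity comes out with $1-\beta^{n-1}$ on the right instead of $1-\beta^n$. You hint at re-deriving the contribution of $v_1$ (``contributes its own value'') rather than quoting the paper, which would rescue the argument, but the value should be stated and the discrepancy with the paper noted, since an unnoticed sign error of $\beta^{n-1}-\beta^n$ is exactly the size that matters in Lemma~\ref{lem:rsqz} downstream.
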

\begin{proof}
  Let $k\in \{2,\dots,m\}$. Summing~\eqref{eq_wh:flow_conservation} over all $i\in V_{k}\cup
  V^-_k$ and then swapping the indices $i$ and $j$, and splitting the sum into the terms for $j\in
  V_{k}$ and the terms for $j\in V^-_k$, we obtain
  \begin{multline*}
    \sum_{i\in V_{k}\cup V^-_k} \sum_{j \in \outi(i)} x_{w_i\,v_j} =\beta\sum_{i\in V_k\cup
      V^-_k}\sum_{j\in \ini(i)} x_{w_j\,v_i}=\beta\sum_{j\in V_k\cup V^-_k}\sum_{i\in \ini(j)}
    x_{w_i\,v_j}\\
    =\beta\left(\sum_{j\in
      V_k}\sum_{i\in \ini(j)} x_{w_i\,v_j}+\sum_{j\in V^-_k}\sum_{i\in \ini(j)}
    x_{w_i\,v_j}\right)=\beta\left(\sum_{j\in V_k}\Phi(j)+\sum_{j\in V^-_k}\Phi(j)\right),
  \end{multline*}  
  All the nonzero terms on the left hand side have $j\in V_k\cup V^+_k$, so we can
  rearrange the left hand side as follows:
  \begin{multline*}
    \sum_{i\in V_k\cup V^-_k} \sum_{j \in \outi(i)} x_{w_i\,v_j} =\sum_{i\in V_k\cup V^-_k}
    \sum_{j \in \outi(i)\cap V_k} x_{w_i\,v_j}+\sum_{i\in V_k\cup V^-_k} \sum_{j \in
      \outi(i)\cap V^+_k} x_{w_i\,v_j} \\
    =\sum_{j\in V_k}\sum_{i\in\ini(j)}x_{w_i\,v_j}+\sum_{j\in V^+_k}\sum_{i\in\ini(j)}x_{w_i\,v_j}=\sum_{j\in V_k}\Phi(j)+\sum_{j\in V^+_k}\Phi(j),
  \end{multline*}
  where for the second equality we used that for $j\in V_k\cup V^+_k$ and $i\in\ini(j)$, the
  variable $x_{w_i\,v_j}$ can be nonzero only if $i\in V_k\cup V^-_k$. Therefore,
  \[\sum_{j\in V_k}\Phi(j)+\sum_{j\in V^+_k}\Phi(j)=\beta\left(\sum_{j\in
        V_k}\Phi(j)+\sum_{j\in V^-_k}\Phi(j)\right),\] which is equivalent
  to~\eqref{eq:sum_of_flows_T2}. For $k=1$, we proceed similarly. We start by summing
  constraints~\eqref{eq_wh:flow_extraction} and~\eqref{eq_wh:flow_conservation} over all
  $i\in V_1\cup V^-_1$: 
  \begin{multline*}
    \sum_{i\in V_{1}\cup V^-_1} \sum_{j \in \outi(i)}
    x_{w_i\,v_j} = 1- \beta^n + \beta\sum_{i\in V_1\cup V^-_1}\sum_{j \in \ini(i)} x_{w_j\,v_i}= 1-
    \beta^n + \beta\sum_{j\in V_1\cup V^-_1}\sum_{i \in \ini(j)} x_{w_i\,v_j}\\
    =1-
    \beta^n + \beta\left(\sum_{j\in V_1}\Phi(j)+\sum_{j\in V^-_1}\Phi(j)\right).
  \end{multline*}
  As above, the left hand side is equal to $\sum_{j\in V_1}\Phi(j)+\sum_{j\in V^+_1}\Phi(j)$,
  and therefore,
  \[\sum_{j\in V_1}\Phi(j)+\sum_{j\in V^+_1}\Phi(j)=1-\beta^n + \beta\left(\sum_{j\in
        V_1}\Phi(j)+\sum_{j\in V^-_1}\Phi(j)\right),\]
  which is equivalent to~\eqref{eq:sum_of_flows_T1}.
\end{proof}
Next we show that every component must have the same number of incoming and outgoing arcs in
$E'_2$.
\begin{lemma}\label{lem:equal_V_kl}
  For every $k\in\{1,\dots,m\}$, $\displaystyle\lvert V^-_k\rvert=\lvert V^+_k\rvert$.   
\end{lemma}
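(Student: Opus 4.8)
The plan is to read off the equality by letting $\beta\to 1^-$ in the flow-balance identities of Lemma~\ref{lem:total_incoming_flow}, which, since we work with ultimate bases, hold for all $\beta$ in $[\alpha^*,1)$. The starting observation is that an index $i$ lies in $V^-_k\cup V^+_k$ precisely when the two copies $v_i$ and $w_i$ belong to \emph{different} components of the good augmented forest $B'$; since a basic arc has both endpoints in a single component, the arc $(v_i,w_i)\in E'_2$ is then non-basic, so $i\in L\cup U$ --- the sole exception being $i=1$, for which no arc $(v_1,w_1)$ exists at all. For $i\in L\cup U$ we have $\Phi(i)=\beta^{n-2}$ or $\Phi(i)=1$ by \eqref{eq:inflow_LU}, while substituting \eqref{eq_wh:flow_injection} into \eqref{eq_wh:flow_extraction} gives $\Phi(1)=\beta^{n-1}$. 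In every case $\Phi(i)\to 1$ as $\beta\to 1^-$, so the right-hand side of \eqref{eq:sum_of_flows_T2} converges to $\lvert V^-_k\rvert-\lvert V^+_k\rvert$ for $k\ge 2$, and, since in addition $1-\beta^n\to 0$, the right-hand side of \eqref{eq:sum_of_flows_T1} likewise converges to $\lvert V^-_1\rvert-\lvert V^+_1\rvert$.

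It then remains to show that the left-hand side $(1-\beta)\sum_{j\in V_k}\Phi(j)$ tends to $0$, and for this it is enough to bound $\sum_{j\in V_k}\Phi(j)$ uniformly in $\beta$. For any node $j\neq 1$, the conservation constraint \eqref{eq_wh:flow_conservation} together with \eqref{eq_wh:flow_bounds} gives $\beta\Phi(j)=\beta^{n-1}+y_j$, so the upper bound in \eqref{eq_wh:slacks} forces $\Phi(j)\le 1$; and $\Phi(1)=\beta^{n-1}\le 1$. Hence $\sum_{j\in V_k}\Phi(j)\le\lvert V_k\rvert\le n$, and so $(1-\beta)\sum_{j\in V_k}\Phi(j)\to 0$. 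Letting $\beta\to 1^-$ in the identity of Lemma~\ref{lem:total_incoming_flow} associated with $k$ then gives $\lvert V^-_k\rvert-\lvert V^+_k\rvert=0$; as this difference is an integer not depending on $\beta$, we conclude $\lvert V^-_k\rvert=\lvert V^+_k\rvert$.

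This argument is entirely per-component and uses nothing beyond Lemma~\ref{lem:total_incoming_flow}, the inflow values \eqref{eq:inflow_LU} on $L\cup U$, and the identity $\Phi(1)=\beta^{n-1}$; in particular no induction on the number $m$ of components and no global double counting are needed. The only point that requires care is the special node $1$: it has no companion arc in $E'_2$, so the ``non-basic arc'' reasoning does not apply to it, and one must instead observe directly that $\Phi(1)=\beta^{n-1}$ still tends to $1$ and keep track of whether $1$ enters $V^-_k$ or $V^+_k$. This is the main obstacle, and it is mild; everything else is substitution of known quantities followed by a single limiting argument.
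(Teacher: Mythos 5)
Your proof is correct and follows essentially the same route as the paper's: both rely on the identities of Lemma~\ref{lem:total_incoming_flow}, observe that the left-hand side is $O(\delta)$ (or tends to $0$) while the right-hand side converges to $\lvert V^-_k\rvert - \lvert V^+_k\rvert$ because every $\Phi(j)$ tends to $1$, and conclude by integrality. The paper phrases the estimate in big-$O$ notation rather than as an explicit limit, and you spell out the $\Phi(1)=\beta^{n-1}$ special case more explicitly, but these are cosmetic differences only.
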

\begin{proof}
  For every $k\in\{1,\dots,m\}$, the left hand side of~\eqref{eq:sum_of_flows_T2}
  and~\eqref{eq:sum_of_flows_T1}, respectively, is
  \[(1-\beta) \sum_{j\in V_k} \Phi(j)=\delta\lvert V_k\rvert(1-O(\delta))=O(\delta).\]
  For $k=2,\dots,m$, the right hand side of~\eqref{eq:sum_of_flows_T2} is
  \[ \beta\sum_{j \in V^-_k} \Phi(j) - \sum_{j\in V^+_k} \Phi(j)=(1-\delta)\abs{V^-_k}\rvert(1-O(\delta))-\abs{V^+_k}(1-O(\delta))\\
    =\abs{V^-_k}-\abs{V^+_k}+O(\delta),\]  
  and the right side of~\eqref{eq:sum_of_flows_T1} is
  \begin{multline*}
    1-\beta^n+ \beta\sum_{j \in V^-_1} \Phi(j) - \sum_{j\in V^+_1}
    \Phi(j)=(1-\delta)\abs{V^-_1}(1-O(\delta))-\abs{V^+_1}(1-O(\delta))+O(\delta)\\
    =\abs{V^-_1}-\abs{V^+_1}+O(\delta).
  \end{multline*}
  In both cases, we conclude that $\abs{V^-_k}-\abs{V^+_k}=O(\delta)$, and the claim follows
  because the left hand side is an integer.
\end{proof}
For $i\in V^-_k\cup V^+_k$, we have $(v_i,w_i)\notin B'$. As a consequence, $i\in L'\cup U'$ which implies that 
$\Phi(i)\in\{1,\beta^{n-2}\}$. For $k\in\{1,\dots,m\}$, set
\begin{align*}
  r_k &= \abs{V^-_k}=\abs{V^+_k},&
  s_k &= \left\lvert\left\{i \in V^-_k\,:\,\Phi(i)=1\right\}\right\rvert,&\
  q_k&= \left\lvert\left\{i\in V^+_k\,:\,\Phi(i)=1\right\}\right\rvert.
\end{align*}
and for $k\in\{2,\dots,m\}$, $z_k = \left\lvert\left\{i\in
    V^+_{k}\,:\,\Phi(i)=\beta^{n-1}\right\}\right\rvert$, or equivalently, $z_k=1$ if $1\in V^+_{k}$,
and $z_k=0$ otherwise.
\begin{lemma}\label{lem:rsqz}
  \begin{align}
    (n-2)(s_k-q_k)+z_k-r_k &\geq 1\qquad k=2,\dots,m,\label{eq:thm_aug_tree_1}\\
    (n-2)(s_1-q_1)+n-r_1-(z_2+\cdots+z_m) &\geq 1.\label{eq:thm_aug_tree_2}
  \end{align}
\end{lemma}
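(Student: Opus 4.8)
The plan is to read both inequalities off the flow-balance identities of Lemma~\ref{lem:total_incoming_flow} by expanding every quantity around $\beta=1$ (with $\delta=1-\beta$). The key input is the exact value of $\Phi(j)$ on the relevant node sets: if $i\in V^-_k\cup V^+_k$ and $i\neq 1$, then the arc $(v_i,w_i)$ joins two distinct components of the augmented forest, so $(v_i,w_i)\notin B'$, hence $i\in L'\cup U'$ and $\Phi(i)\in\{\beta^{n-2},1\}$ by~\eqref{eq:inflow_LU}; moreover $\Phi(1)=\beta^{n-1}$, since $\beta\Phi(1)=\Psi(1)-(1-\beta^n)=\beta^n$ by~\eqref{eq_wh:flow_extraction} and~\eqref{eq_wh:flow_injection}. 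For $j\in V_k$ only the coarse bound $\Phi(j)\in[\beta^{n-2},1]$ (or $\Phi(j)=\beta^{n-1}$ if $j=1$) is needed, which gives $\Phi(j)=1+O(\delta)$ and hence $\sum_{j\in V_k}\Phi(j)=\lvert V_k\rvert+O(\delta)$, irrespective of whether the $y_j$ are basic.

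First I would treat $k\geq 2$. Among the $r_k$ nodes of $V^-_k$, exactly $s_k$ have $\Phi$-value $1$ and the remaining $r_k-s_k$ have value $\beta^{n-2}$; among the $r_k$ nodes of $V^+_k$, exactly $q_k$ have value $1$, exactly $z_k$ have value $\beta^{n-1}$ (this is node $1$ if $1\in V^+_k$, and there is no other node of this value), and the remaining $r_k-q_k-z_k$ have value $\beta^{n-2}$. Substituting $\beta^{n-2}=1-(n-2)\delta+O(\delta^2)$ and $\beta^{n-1}=1-(n-1)\delta+O(\delta^2)$ into the right-hand side of~\eqref{eq:sum_of_flows_T2} makes the $\delta^0$ terms cancel and leaves $\bigl((n-2)(s_k-q_k)+z_k-r_k\bigr)\delta+O(\delta^2)$, while the left-hand side of~\eqref{eq:sum_of_flows_T2} is $\delta\lvert V_k\rvert+O(\delta^2)$. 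Comparing coefficients of $\delta$ (equivalently, dividing the identity of Lemma~\ref{lem:total_incoming_flow} by $1-\beta$ and letting $\beta\to 1$, which is legitimate since both sides are rational functions of $\beta$ vanishing at $\beta=1$) yields the exact equality $\lvert V_k\rvert=(n-2)(s_k-q_k)+z_k-r_k$, and~\eqref{eq:thm_aug_tree_1} follows because $V_k\neq\emptyset$ by Lemma~\ref{lem:nonempty_Vij}.

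For~\eqref{eq:thm_aug_tree_2} I would run the same computation on~\eqref{eq:sum_of_flows_T1}, now carrying the extra term $1-\beta^n=n\delta+O(\delta^2)$, and split into two cases according to the location of $v_1$. If $v_1\in T_1$, then $1\in V_1$, node $1$ is in neither $V^-_1$ nor $V^+_1$, and $z_2+\dots+z_m=0$; the computation gives $\lvert V_1\rvert=(n-2)(s_1-q_1)+n-r_1$. If $v_1\notin T_1$, then $1\in V^-_1$ contributes $\Phi(1)=\beta^{n-1}$ to the sum over $V^-_1$ (so $V^-_1$ has one node of value $\beta^{n-1}$, $s_1$ of value $1$, and $r_1-1-s_1$ of value $\beta^{n-2}$), while $1\in V^+_k$ for exactly one $k\geq 2$, so $z_2+\dots+z_m=1$; here the computation gives $\lvert V_1\rvert=(n-2)(s_1-q_1)+n-1-r_1$. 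In both cases $\lvert V_1\rvert=(n-2)(s_1-q_1)+n-r_1-(z_2+\dots+z_m)$, and~\eqref{eq:thm_aug_tree_2} follows since $\lvert V_1\rvert\geq\lvert V_1\setminus\{1\}\rvert\geq 1$ by Lemma~\ref{lem:nonempty_Vij}. I expect the only real difficulty to be bookkeeping: tracking which of the three sums node $1$ contributes its $\beta^{n-1}$ to, and checking that the $k=1$ case split is exactly mirrored by the definition of the $z_k$; no idea beyond the first-order Taylor expansion is needed.
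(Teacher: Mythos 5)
Your proof is correct and takes essentially the same approach as the paper: expand the flow-balance identities of Lemma~\ref{lem:total_incoming_flow} in powers of $\delta=1-\beta$ using the exact values of $\Phi$ on $V^\pm_k$, read off the coefficient of $\delta$, and invoke Lemma~\ref{lem:nonempty_Vij} to bound $\lvert V_k\rvert$ below by $1$. The only (cosmetic) difference is that you push the computation to the exact integer identities $\lvert V_k\rvert=(n-2)(s_k-q_k)+z_k-r_k$ and $\lvert V_1\rvert=(n-2)(s_1-q_1)+n-r_1-(z_2+\cdots+z_m)$, whereas the paper stops at the inequality obtained from the lower bound $\Phi(j)\geq\beta^{n-1}$; both yield the lemma.
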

\begin{proof}
  We start by bounding the left hand sides of the identities~\eqref{eq:sum_of_flows_T2}
  and~\eqref{eq:sum_of_flows_T1} from below:
  \begin{equation}\label{eq:LHS_asym_bound}
    \delta \sum_{j\in V_k} \Phi(j) \geq\delta\lvert V_k\rvert\beta^{n-1}=\delta\lvert
    V_k\rvert(1-O(\delta))\geq\delta+O(\delta^2),    
  \end{equation}
  where the second inequality follows from Lemma~\ref{lem:nonempty_Vij}. Next we express the sums on the
  right hand sides of~\eqref{eq:sum_of_flows_T2} and~\eqref{eq:sum_of_flows_T1} in terms of the
  quantities $r_k$, $s_k$, $q_k$ and $z_k$.  First consider the case $k\in\{2,\dots,m\}$. Then
  \begin{align*}
    \sum_{j \in V^-_k} \Phi(j) &= s_k + (r_k-s_k)\beta^{n-2}=r_k-(r_k-s_k)(n-2)\delta+O\left(\delta^2\right),\\
    \sum_{j \in V^+_k} \Phi(j) &= q_k + (r_k-q_k-z_k)\beta^{n-2}+z_k\beta^{n-1}=r_k-\left[(r_k-q_k)(n-2)+z_k\right]\delta+O\left(\delta^2\right).
  \end{align*}
  The right hand side of~\eqref{eq:sum_of_flows_T2} is
  \begin{multline*}
    (1-\delta)\sum_{j \in V^-_k} \Phi(j)-\sum_{j \in V^+_k} \Phi(j) \\
    = r_k-(r_k-s_k)(n-2)\delta-r_k\delta-r_k+\left[(r_k-q_k)(n-2)+z_k\right]\delta+O\left(\delta^2\right)\\
    =\left[(s_k-q_k)(n-2)-r_k+z_k\right]\delta+O\left(\delta^2\right),
  \end{multline*}
  and with~\eqref{eq:LHS_asym_bound} we obtain~\eqref{eq:thm_aug_tree_1}. For $k=1$,
  \begin{align*}
    \sum_{j \in V^-_1} \Phi(j) &= s_1 +
                                  (r_1-s_1-(z_2+\dots+z_m))\beta^{n-2}+(z_2+\dots+z_m)\beta^{n-1}\\
    &=r_1-\left[(r_1-s_1)(n-2)+(z_2+\dots+z_m)\right]\delta+O\left(\delta^2\right),\\
    \sum_{j \in V^+_1} \Phi(j) &= q_1 + (r_1-q_1)\beta^{n-2}=r_1-(r_1-q_1)(n-2)\delta+O\left(\delta^2\right).
  \end{align*}
  The right hand side of~\eqref{eq:sum_of_flows_T1} is
  \begin{multline*}
    1-\beta^n+ (1-\delta)\sum_{j \in V^-_1} \Phi(j) - \sum_{j \in V^+_1} \Phi(j) \\
    =n\delta+r_1-\left[(r_1-s_1)(n-2)+(z_2+\dots+z_m)\right]\delta-r_1\delta-r_1+(r_1-q_1)(n-2)\delta+O\left(\delta^2\right)\\
    =\left[n+(s_1-q_1)(n-2)-r_1-(z_2+\dots+z_m)\right]\delta+O\left(\delta^2\right).
  \end{multline*}
  and with~\eqref{eq:LHS_asym_bound} we obtain~\eqref{eq:thm_aug_tree_2}.
\end{proof}
We are now ready to finish the proof of Theorem~\ref{thm:augmented_tree_bases}.
\begin{proof}[Proof of Theorem~\ref{thm:augmented_tree_bases}]  
  It follows from~\eqref{eq:thm_aug_tree_1} that $r_k\geq 1$, for $k=2,\dots,m$, because $r_k=0$
  implies $s_k=q_k=z_k=0$ which forces the left hand side of~\eqref{eq:thm_aug_tree_1} to be
  zero. Then~\eqref{eq:thm_aug_tree_1} implies $(n-2)(s_k-q_k)\geq 1+r_k-z_k\geq 1$, hence
  $s_k\geq q_k+1$. On the other hand, by definition we have
  \[s_1+\dots+s_m=q_1+\cdots+q_m=\left\lvert\left\{i\in V\setminus \bigcup_{k=1}^{m}V_k\,:\, \Phi(i)=1\right\}
    \right\rvert.\]
  Therefore, $q_1-s_1=(s_2+\cdots+s_m)-(q_2+\cdots+q_m)\geq m-1\geq 1$, and~\eqref{eq:thm_aug_tree_2} implies
  \[n-2\leq(n-2)(q_1-s_1)\leq n-r_1-(z_2+\cdots+z_m)-1,\]
  which simplifies to $r_1+z_2+\dots+z_m\leq 1$. If $r_1=0$, then~\eqref{eq:sum_of_flows_T1} becomes
  \[\sum_{j\in V_1}\Phi(j)=\frac{1-\beta^n}{1-\beta}=1+\beta+\beta^2+\cdots+\beta^{n-1}=n+O\left(\delta\right).\]
  But $\Phi(j)=1-O(\delta)$ for every $j\in V$, so the left hand side is
  $\abs{V_1}+O(\delta)$, and this implies $\abs{V_1}=n$ which contradicts the assumption that
  $B'$ is not connected. Therefore, $r_1=1$ and $z_2+\dots+z_m=0$,~\eqref{eq:thm_aug_tree_2} becomes
  $(n-2)(s_1-q_1+1)\geq 0$, and together with $s_1-q_1\leq 1-m$, we obtain $m=2$ and
  $q_1=s_1+1$. Furthermore, $q_1\leq r_1$, this implies $q_1=1$ and $s_1=0$, and then $\Phi(j)=1$
  for the unique node $j\in V^+_{1}$, and $\Phi(j')=\beta^{n-2}$ for the unique node $j'\in
  V^-_{1}$. Now~\eqref{eq:sum_of_flows_T1} becomes
  \[(1-\beta)\sum_{j\in V_1}\Phi(j)=1-\beta^n+\beta^{n-1}-1=(1-\beta)\beta^{n-1}.\] From
  $z_2+\cdots+z_m=0$, we have $1\in V_1$, and with $\Phi(1)=\beta^{n-1}$, this implies
  $\sum_{j\in V_1\setminus\{1\}} \Phi(j)=0$. But then $V_1=\{1\}$, which contradicts
  Lemma~\ref{lem:nonempty_Vij}.
\end{proof}

\subsection{Thick and thin arcs}\label{subsec:thick_and_thin}
In this subsection, we show that for a feasible basis of the polytope $\whBeta(G)$, the
values of the basic variables either tend to one or zero as $\beta\to 1$. More precisely, we show
that there are exactly $n$ arcs corresponding to basic variables that tend to 1, and that these $n$
arcs form a collection of node-disjoint cycles which we call \emph{thick cycles}. Furthermore, we
provide upper bounds for the number of non-basic $y$-variables which are at their lower bounds and
the number of non-basic $y$-variables which are at their upper bounds.

\begin{definition}\label{def:thick_and_thin}
  Let $(B,L,U)\in \Gamma(G)$, where $B=A\cup Y$ and let $\vect{x}= A_B^{-1}\vect{b}$. The arc $(i,j)\in A$ is called \emph{thick}
  with respect to $B$, if $x_{ij} =1-O(\delta)$. The arc $(i,j)\in A$ is called \emph{thin} with
  respect to $B$ if $x_{ij}=O(\delta)$. Otherwise, the arc $(i,j)\in A$ is called \emph{intermediate}
  with respect to $B$.
\end{definition}
We start by expressing the sum of all flow variables in terms of $\delta$.
\begin{lemma}\label{lem:total_outflow}
  For every point $\vect x\in\whBeta(G)$,
  \[\sum_{(i,j)\in E}x_{ij}=\sum_{i=1}^n\Phi(i)=\sum_{i=1}^n\Psi(i)=\binom{n}{1}-\binom{n}{2}\delta+\binom{n}{3}\delta^2-\dots+(-1)^{n-1}\binom{n}{n}\delta^{n-1}.\]
\end{lemma}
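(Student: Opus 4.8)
The plan is to reduce everything to the flow-conservation constraints \eqref{eq_wh:flow_extraction}--\eqref{eq_wh:flow_injection}, which hold at every point of $\whBeta(G)$, and then evaluate a finite geometric series. First I would note that the two identities $\sum_{(i,j)\in E}x_{ij}=\sum_{i=1}^n\Phi(i)$ and $\sum_{(i,j)\in E}x_{ij}=\sum_{i=1}^n\Psi(i)$ are merely two ways of grouping the sum over all arcs: in the first, each arc $(i,j)$ is counted once, in the group indexed by its head $j$; in the second, once in the group indexed by its tail $i$. So it remains to compute the common value $S:=\sum_{i=1}^n\Psi(i)$. Next I would pin down the flows at node $1$: constraint \eqref{eq_wh:flow_injection} gives $\Psi(1)=1$, and substituting this into \eqref{eq_wh:flow_extraction} yields $\beta\,\Phi(1)=\Psi(1)-(1-\beta^n)=\beta^n$, hence $\Phi(1)=\beta^{n-1}$; for every other node, \eqref{eq_wh:flow_conservation} gives $\Psi(i)=\beta\,\Phi(i)$.

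Then I would obtain a single linear equation for $S$. Using $\sum_{i=1}^n\Phi(i)=\sum_{i=1}^n\Psi(i)=S$,
\[
S=\Psi(1)+\sum_{i\neq 1}\Psi(i)=1+\beta\sum_{i\neq 1}\Phi(i)=1+\beta\bigl(S-\Phi(1)\bigr)=1+\beta S-\beta^n,
\]
so $(1-\beta)S=1-\beta^n$ and $S=\dfrac{1-\beta^n}{1-\beta}=1+\beta+\dots+\beta^{n-1}$. Finally, substituting $\beta=1-\delta$ — equivalently, using $1-\beta^n=1-(1-\delta)^n=\sum_{k=1}^n(-1)^{k-1}\binom{n}{k}\delta^k$ and dividing by $\delta=1-\beta$ — gives the stated polynomial $\binom{n}{1}-\binom{n}{2}\delta+\dots+(-1)^{n-1}\binom{n}{n}\delta^{n-1}$.

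There is essentially no real obstacle: the statement is about every point of the polytope, not just feasible bases, so the wedge constraints, the invertibility arguments of Subsection~\ref{subsec:ultimate}, and the GNF machinery are all irrelevant here. The only point requiring care is to handle node $1$ separately, since it is governed by \eqref{eq_wh:flow_extraction} rather than by \eqref{eq_wh:flow_conservation}; once that bookkeeping is in place, the rest is the elementary evaluation of a geometric sum and a binomial expansion.
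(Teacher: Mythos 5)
Your proof is correct and follows essentially the same route as the paper: double-count the arcs to get the first two equalities, use $\Psi(1)=1$, $\Phi(1)=\beta^{n-1}$, $\Psi(i)=\beta\Phi(i)$ for $i\neq1$ to derive $S=(1-\beta^n)/(1-\beta)$, and then expand in $\delta$. The only cosmetic difference is in the last step, where you expand $1-(1-\delta)^n$ and divide by $\delta$ directly, whereas the paper expands the geometric sum $\sum_{k=0}^{n-1}(1-\delta)^k$ and appeals to the hockey-stick identity; your version is a touch more streamlined but arrives at the same coefficients.
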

\begin{proof}
  The first two equalities follow from the observation that $\sum_{i=1}^n\Phi(i)$ and
  $\sum_{i=1}^n\Psi(i)$ are two different ways of computing the sum of all the $x_{ij}$, first by
  grouping the arcs according to their end node, and second by grouping them according to their
  start node. With $\Psi(1)=1$, $\Phi(1)=\beta^{n-1}$ and $\Psi(i)=\beta\Phi(i)$ for all
  $i\in V\setminus\{1\}$, we obtain
  \[\sum_{i=1}^n\Psi(i)-1=\sum_{i=2}^n\Psi(i)=\beta\sum_{i=2}^n\Phi(i)=\beta\left(\sum_{i=1}^n\Phi(i)-\beta^{n-1}\right)=\beta\left(\sum_{i=1}^n\Psi(i)-\beta^{n-1}\right),\]
  which implies
  \begin{multline*}
    \sum_{i=1}^n\Psi(i)=\frac{1-\beta^n}{1-\beta}=\sum_{k=0}^{n-1}\beta^k=\sum_{k=0}^{n-1}(1-\delta)^k=\sum_{k=0}^{n-1}\sum_{l=0}^k\binom{k}{l}(-\delta)^l\\
    =\sum_{l=0}^{n-1}\left((-1)^l\sum_{k=l}^{n-1}\binom{k}{l}\right)\delta^l=\sum_{l=0}^{n-1}(-1)^l\binom{n}{l+1}\delta^l.  \qedhere
  \end{multline*}  
\end{proof}
\begin{theorem}\label{thm:disjoint_cycle_structure}
  Let $(B,L,U)\in \Gamma(G)$, where $B=A\cup Y$ and let $A_1$, $A_2$ denote the sets of thick and thin arcs, respectively. Then
  \begin{enumerate}[(i)]
  \item $A =  A_1 \cup A_2$,\label{item:no_intermediate_arcs}
  \item $A_1$ forms a spanning collection of node-disjoint directed cycles,\label{item:thick_cycles}
  \item for every $i\in \nodeset \setminus \{1\}$, the digraph with arc set $A_1\cup A_2$
    contains a directed path from node $1$ to node $i$,\label{lem:directed_path_from_1}
   \item $\lvert L\rvert\leq (n-1)/2$ and $\lvert U\rvert\leq (n-1)/2$.\label{item:bounds_LU}
  \end{enumerate}
\end{theorem}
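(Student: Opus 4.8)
The plan is to pass to the limit $\delta\to 0$ (equivalently $\beta\to1$) for the basic solution attached to the fixed ultimate basis $(B,L,U)\in\Gamma(G)$, $B=A\cup Y$, reduce the cycle structure \eqref{item:no_intermediate_arcs}--\eqref{item:thick_cycles} to a vertex statement about a face of the Birkhoff polytope, and obtain \eqref{lem:directed_path_from_1}--\eqref{item:bounds_LU} from direct flow-conservation bookkeeping. By Proposition~\ref{prop:ultimate_bases} I may let $\beta$ range over $[\alpha^*,1)$; write $(\vect x,\vect y)=(\vect x(\beta),\vect y(\beta))$ for the basic solution, whose entries are rational functions of $\beta$. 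First I would observe that $0\le x_{ij}(\beta)\le\Psi(i)\le1$ and $0\le y_i(\beta)\le\beta-\beta^{n-1}\to0$, so none of the $x_{ij}$ has a pole at $\beta=1$; hence $x^*_{ij}:=\lim_{\beta\to1}x_{ij}(\beta)$ exists in $[0,1]$ with $x_{ij}(\beta)=x^*_{ij}+O(\delta)$ and $y_i(\beta)=O(\delta)$. Letting $\beta\to1$ in \eqref{eq_wh:flow_extraction}--\eqref{eq_wh:nonnegativity} (using $\Psi(1)=1$, $\Phi(1)=\beta^{n-1}$, $\Psi(i)=\beta\Phi(i)$) shows that $\vect x^*$, extended by $0$ off $A$, is a doubly stochastic matrix supported on $E$, i.e.\ a point of the face $\Pi$ of the Birkhoff polytope consisting of such matrices; its vertices are exactly the (automatically fixed-point-free) permutation matrices it contains.

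The crux is that $\vect x^*$ is a \emph{vertex} of $\Pi$. I would argue by contradiction: if not, there is a nonzero $\vect d$ whose row sums and column sums all vanish and with $\operatorname{supp}\vect d\subseteq\operatorname{supp}\vect x^*\subseteq A$. The column $\vect c_{ij}(\beta)$ of the constraint matrix of $\whBeta(G)$ belonging to $x_{ij}$ is $\vect e_{\mathrm{bal}_i}-\beta\,\vect e_{\mathrm{bal}_j}$ plus $\vect e_{\mathrm{inj}}$ if $i=1$ and $\vect e_{\mathrm{wdg}_i}$ if $i\neq1$ (where $\mathrm{bal}_k$ is row \eqref{eq_wh:flow_extraction} for $k=1$ and row \eqref{eq_wh:flow_conservation} otherwise). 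Because every row sum and column sum of $\vect d$ is zero, a coordinate-by-coordinate check gives $\sum_{(i,j)\in A}d_{ij}\,\vect c_{ij}(\beta)=\vect 0$ for \emph{every} $\beta$; since these are columns of $M_B(\beta)$, that makes $M_B(\beta)$ singular, contradicting the basis property. Hence $\vect x^*$ is a vertex, so by Birkhoff--von Neumann it is the incidence matrix of a fixed-point-free permutation $\sigma$ of $V$ with $(i,\sigma(i))\in E$; in particular $x^*_{ij}\in\{0,1\}$ for all $(i,j)\in A$. Together with $x_{ij}\le1$ this forces $x_{ij}=1-O(\delta)$ when $x^*_{ij}=1$ and $x_{ij}=O(\delta)$ when $x^*_{ij}=0$, so $A=A_1\cup A_2$ with $A_1=\{(i,\sigma(i)):i\in V\}$ (proving \eqref{item:no_intermediate_arcs}), and $A_1$ has in-degree and out-degree $1$ at every node, hence is a spanning union of node-disjoint directed cycles (proving \eqref{item:thick_cycles}). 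This vertex step is the main obstacle: in general a family of vertices may degenerate in the limit to a non-vertex, and the identity $\sum_{(i,j)\in A}d_{ij}\vect c_{ij}(\beta)=\vect 0$ is exactly what rules that out here. (An alternative to this whole paragraph would route through the good augmented tree of Theorem~\ref{thm:augmented_tree_bases}, but the limiting argument seems cleaner.)

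For \eqref{lem:directed_path_from_1} I would let $R$ be the set of nodes reachable from $1$ in $(V,A)=(V,A_1\cup A_2)$ and suppose $\bar R=V\setminus R\neq\varnothing$ (so $1\notin\bar R$). Every arc carrying positive flow lies in $A$, and no arc of $A$ leaves $R$, so for $i\in\bar R$ the inflow $\Phi(i)$ uses only arcs inside $\bar R$. Setting $W=\sum_{i,j\in\bar R}x_{ij}$ and $F=\sum_{i\in\bar R,\,j\in R}x_{ij}\ge0$, summing \eqref{eq_wh:flow_conservation} over $i\in\bar R$ gives $W+F=\beta W$, so $F=-\delta W\le0$, forcing $W=F=0$ and hence $\Phi(i)=0$ for all $i\in\bar R$ --- impossible, since $\Phi(i)\ge\beta^{n-2}>0$. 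Thus $R=V$.

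Finally, for \eqref{item:bounds_LU} I would compute $\sum_{i\in V}\Phi(i)$ two ways. By Lemma~\ref{lem:total_outflow} it equals $\sum_{k=0}^{n-1}\beta^k$, so $\sum_i(1-\Phi(i))=\sum_{k=0}^{n-1}(1-\beta^k)$. On the other hand, with $\Phi(1)=\beta^{n-1}$, $\Phi(i)=\beta^{n-2}$ for $i\in L$ and $\Phi(i)=1$ for $i\in U$ by~\eqref{eq:inflow_LU}, and $\Phi(i)=(y_i+\beta^{n-1})/\beta\in[\beta^{n-2},1]$ for $i\in Y$, one gets $\sum_i(1-\Phi(i))=(1-\beta^{n-1})+|L|(1-\beta^{n-2})+\sum_{i\in Y}(1-\Phi(i))$. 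Equating, $\sum_{i\in Y}(1-\Phi(i))=\sum_{k=0}^{n-2}(1-\beta^k)-|L|(1-\beta^{n-2})$, whose leading term in $\delta$ is $\bigl[\binom{n-1}{2}-|L|(n-2)\bigr]\delta$, while $0\le\sum_{i\in Y}(1-\Phi(i))\le|Y|(1-\beta^{n-2})=|Y|(n-2)\delta+O(\delta^2)$. Comparing leading coefficients gives $0\le\binom{n-1}{2}-|L|(n-2)\le|Y|(n-2)$, i.e.\ $|L|\le(n-1)/2$ and $|L|+|Y|\ge(n-1)/2$; the latter with $|L|+|U|+|Y|=n-1$ yields $|U|\le(n-1)/2$ (the case $n\le2$ being immediate). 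Spotting this exact two-way identity is the other non-routine ingredient; the rest is elementary manipulation of the flow constraints.
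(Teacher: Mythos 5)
Your argument is correct throughout, and for parts (iii) and (iv) it is essentially the paper's own bookkeeping: you sum the conservation constraints over a reachability set to force a sign contradiction, and you compare leading $\delta$-coefficients of a two-way count (the paper does the same with $\Psi$ instead of $\Phi$ and with the ``set of nodes that can reach $i^*$'' rather than the complement of the set reachable from $1$). The genuinely different step is your proof of (i) and (ii). The paper derives these from Theorem~\ref{thm:augmented_tree_bases}: an intermediate arc forces at least two intermediate out-arcs (resp.\ in-arcs) at any node having one, from which a cycle alternating forward and backward arcs is extracted, whose image in $G'$ is a balanced cycle and hence impossible inside a good augmented forest. You instead pass to the limit $\beta\to1$: the limit matrix $\vect x^*$ is doubly stochastic with support in $A$, and if it were not a vertex of the corresponding face $\Pi$ of the Birkhoff polytope there would be a nonzero $\vect d$ with zero row and column sums and $\operatorname{supp}\vect d\subseteq A$; your coordinate check of $\sum_{(i,j)\in A}d_{ij}\vect c_{ij}(\beta)$ does indeed vanish identically (the balance rows need both row- and column-sum conditions, the injection and wedge rows need only the row sums), so $M_B(\beta)$ would be singular for every $\beta$, contradicting $B\in\mathcal B$. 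By Birkhoff--von Neumann, $\vect x^*$ is then a fixed-point-free permutation matrix supported on $A$, which gives (i) and (ii) at once. This is a clean alternative: it is self-contained, bypassing Theorem~\ref{thm:augmented_tree_bases} and the good-augmented-tree machinery entirely, and the only extra input is the elementary observation that a rational function bounded on $[\alpha^*,1)$ has no pole at $1$. What it trades away is the explicit correspondence between intermediate arcs and balanced cycles in $G'$, which the paper's phrasing makes visible; what it gains is independence of this theorem from Section~\ref{subsec:gnf}.
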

\begin{proof}\hfill
  \begin{enumerate}[(i)]
  \item For the sake of contradiction, assume that there exists an intermediate arc $(i,j) \in
    A$. It follows from~\eqref{eq_wh:flow_extraction}--\eqref{eq_wh:flow_bounds} that, for every
    $i\in \nodeset$, the total outflow and the total inflow tend to one as $\beta\to 1$. Thus, there
    is at least one more intermediate arc leaving node $i$, say $(i,\ell)$. Similarly, nodes $j$ and
    $\ell$ have another intermediate incoming arc. This argument shows, that every node with an
    outgoing intermediate arc has at least two outgoing intermediate arcs, and every node with an
    incoming intermediate arc has at least two incoming intermediate arcs. As a consequence $A$
    contains a cycle $C$ which alternates between forward and backward arcs, which implies that the
    corresponding cycle $C'$ in $B'$ is balanced. This contradicts the assumption that $B'$ is a
    basis for $\nfBeta(G')$.
  \item Using~(\ref{eq_wh:flow_bounds}),~(\ref{eq_wh:slacks}), and~(\ref{eq_wh:flow_injection}), for every $i\in \nodeset$,
    \[\left\lvert\{j\in\outi(i)\,:\,(i,j)\in A_1\}\right\rvert - O(\delta)=
      \sum_{j\in\outi(i)\,:\,(i,j)\in A_1}x_{ij}\leq\sum_{j\in\outi(i)}x_{ij}=1 - O(\delta).\]
    Thus, there is at most one thick arc leaving node $i$. If there is a node $i\in \nodeset$ with no
    thick leaving arc, then using part~(\ref{item:no_intermediate_arcs}), the lower bound from the
    wedge constraints~\eqref{wedge_constraints} and~(\ref{eq_wh:flow_extraction}), we have
    \[ 1 - O(\delta) \leq \sum_{j \in \outi(i)} x_{ij} = \sum_{j \in \outi(i)\,:\, (i,j) \in A_2}
      x_{ij} = O(\delta),\] which is a contradiction. Hence, in the subgraph $X=(\nodeset,A_1)$
    every node has exactly one leaving arc. A similar argument shows that every node has exactly one
    entering thick arc, and therefore, $A_1$ is a spanning collection of node-disjoint directed
    cycles.
  \item For the sake of contradiction, let us assume that there is a node $i^*\in \nodeset$ such
    that there is no directed path from node 1 to $i^*$. Let $V^*\subseteq\nodeset$ be the set of
    nodes $j$ such that there exists a directed path from $j$ to $i^*$. Since $1\notin V^*$,~\eqref{eq_wh:flow_conservation} implies
    \[ \sum_{\ell \in \outi(j)} x_{j\ell} = \beta \sum_{\ell \in \ini(j)} x_{\ell j},\]
     for all
    $j\in V^*$. Summing over all $j\in V^*$, we obtain
    \[\sum_{j\in V^*} \sum_{\ell \in \outi(j)} x_{j\ell} = \beta \sum_{j\in V^*} \sum_{\ell \in
        \ini(j)} x_{\ell j}.\]
    The sum on the left hand side is over all arcs starting in $V^*$ and
    we split it into the sum over the arcs with both nodes in $V^*$ and the arcs starting in $V^*$
    and ending in $\nodeset\setminus V^*$. The sum on the right hand side is over all arcs ending in
    $V^*$, and by the definition of $V^*$ these arcs also start in $V^*$. This gives
    \[\sum_{\substack{(j,\ell)\in \arcset\\\ell\in V\setminus V^*,\, j\in V^*}} x_{j\ell} + 
      \sum_{\substack{(j,\ell)\in \arcset\\ \ell\in V^*,\, j\in V^*}} x_{j\ell} = \beta\sum_{\substack{(\ell,j)\in \arcset\\\ell\in V^*,\, j\in V^*}} x_{\ell j}= \beta\sum_{\substack{(j,\ell)\in \arcset\\\ell\in V^*,\, j\in V^*}} x_{j\ell},\]
    which simplifies to
    \[ \sum_{\substack{(j,\ell)\in \arcset\\\ell\in V\setminus V^*,\, j\in V^*}} x_{j\ell}=
      -\delta\sum_{\substack{(j,\ell)\in \arcset\\\ell\in V^*,\, j\in V^*}} x_{j\ell}.\]
    With $\delta>0$ and the non-negativity of the $x_{j\ell}$, it follows that 
    \[\sum_{\substack{(j,\ell)\in \arcset\\\ell\in V\setminus V^*,\, j\in V^*}}
      x_{j\ell}=\sum_{\substack{(j,\ell)\in \arcset\\\ell\in V^*,\, j\in V^*}} x_{j\ell}=0,\]
    and this implies that $x_{j\ell}=0$ for all $j\in V^*$, $\ell\in\outi(j)$. In particular,
    \[\sum_{\ell\in\outi(i^*)}x_{i^*\ell}=0<\beta^{n-1},\]
    which contradicts the wedge constraint~\eqref{wedge_constraints}.
  \item We use Lemma~\ref{lem:total_outflow}, together with $\Psi(i)=\beta=1-\delta$ for $i\in U$,
    and $\Psi(i)=\beta^{n-1}=1-(n-1)\delta+O\left(\delta^2\right)$ for $i\in L$:
    \begin{multline*}
       \sum_{i\in Y}\Psi(i)=\sum_{i=1}^n\Psi(i)-\Psi(1)-\lvert U\rvert(1-\delta)-\lvert
       L\rvert(1-(n-1)\delta)+O\left(\delta^2\right)\\
       =n-\binom{n}{2}\delta-1-\abs{U}-\abs{L}+\abs{U}\delta+(n-1)\abs{L}\delta+O\left(\delta^2\right)\\
       =\abs{Y}-\left(\binom{n}{2}-\abs{U}-(n-1)\abs{L}\right)\delta+O\left(\delta^2\right).
     \end{multline*}
     On the other hand, from $\abs{Y}=n-1-\abs{U}-\abs{L}$ and $\beta^{n-1}\leq \Psi(i)\leq
   \beta$ for every $i\in Y$, we obtain
     \[\left(n-1-\abs{U}-\abs{L}\right)(1-(n-1)\delta)-O\left(\delta^2\right)\leq\sum_{i\in Y}\Psi(i)\leq \left(n-1-\abs{U}-\abs{L}\right)(1-\delta),\]
     and therefore,
\[n-1-\abs{U}-\abs{L}\leq\binom{n}{2}-\abs{U}-(n-1)\abs{L}\leq (n-1)\left(n-1-\abs{U}-\abs{L}\right)\]
The first of these inequalities simplifies to $\abs{L}\leq (n-1)/2$, and the second one to
$\abs{U}\leq(n-1)/2$.\qedhere
  \end{enumerate}
\end{proof}
Combining Theorems~\ref{thm:augmented_tree_bases} and~\ref{thm:disjoint_cycle_structure} we
establish the following results on the structure of feasible bases of $\whBeta(G)$ in the GNF
setting.
\begin{theorem}\label{thm:bases_structure}
  Let $(B,L,U)\in\Gamma(G)$ where $B=Y\cup A_1\cup A_2$ and let $B'=Y'\cup A_1'\cup A'_2$ be the
  corresponding arc sets in $G'$, that is,
  \begin{align*}
    Y' &= \{(v_i,w_i)\,:\,i\in Y\},& A'_1 &= \{(w_i,v_j)\,:\,(i,j)\in A_1\}, & A'_2 &= \{(w_i,v_j)\,:\,(i,j)\in A_2\}.
  \end{align*}
  \begin{enumerate}[(i)]
  \item $B'$ is a good augmented tree spanning all nodes of $G'$.\label{item:good_augtree}
  \item $A'_1$ is the unique perfect matching in $G'$.\label{item:matching}
  \item $Y'\cup A'_1$ is a collection of $n-\lvert Y\rvert$ node-disjoint
    paths.\label{item:disjoint_paths}
  \end{enumerate}
\end{theorem}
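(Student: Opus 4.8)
Part~(i) requires no new work: $(B',L',U')$ is a basis for the GNF polytope $\nfBeta(G')$, so Theorem~\ref{thm:GNF_bases} tells us $B'$ is a good augmented forest spanning all nodes of $G'$, and since the basis is feasible, Theorem~\ref{thm:augmented_tree_bases} tells us $B'$ is connected; a connected good augmented forest is a good augmented tree.

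For part~(ii) I would first record that $A_1'$ is a perfect matching of $G'$. By Theorem~\ref{thm:disjoint_cycle_structure}\,(ii) the thick arcs $A_1$ form a spanning collection of node-disjoint directed cycles in $G$, so every node $i$ has exactly one outgoing and exactly one incoming thick arc; translating through the node splitting, every $w_i$ is covered by exactly one arc of $A_1'$ (the image of the outgoing thick arc at $i$) and every $v_j$ by exactly one (the image of the incoming thick arc at $j$), so $A_1'$ matches $\{w_i\}$ bijectively onto $\{v_j\}$. For uniqueness I would combine part~(i) with Lemma~\ref{lem:nonempty_Vij}: $B'$ is unicyclic, its only cycle being the extra cycle $C'$, and $C'$ contains an arc of $E'_2$. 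Since $A_1'\cup A_2'\subseteq E'_1$, the subgraph of $B'$ on the arc set $A_1'\cup A_2'$ misses that arc of $C'$, hence contains no cycle at all; a forest has at most one perfect matching, so $A_1'$ is the unique perfect matching among the basic arcs of $G'$.

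For part~(iii), note that $A_1'$ is a perfect matching and $Y'=\{(v_i,w_i):i\in Y\}$ is a matching, so $Y'\cup A_1'$ is a disjoint union of paths and even cycles; counting ($2n$ nodes and $n+\lvert Y\rvert$ arcs) shows the number of path-components is exactly $n-\lvert Y\rvert$, so the claim is equivalent to saying that $Y'\cup A_1'$ contains no cycle. Suppose it did. Since $Y'\cup A_1'\subseteq B'$ and $B'$ is unicyclic (part~(i)), that cycle must be $C'$, so $C'\subseteq Y'\cup A_1'$; because $Y'$ and $A_1'$ are matchings, the arcs of $C'$ alternate between them, and reading $C'$ off as $v_{a_1}\to w_{a_1}\to v_{a_2}\to\cdots\to v_{a_\ell}\to w_{a_\ell}\to v_{a_1}$ we obtain a directed cycle $Q=(a_1,a_2,\dots,a_\ell)$ of $A_1$ with $Q\subseteq Y$. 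Now I would invoke feasibility. The good augmented tree $B'$ decomposes as $C'$ together with node-disjoint hanging trees, one rooted at each node of $C'$ (each component of $B'\setminus E(C')$ contains exactly one node of $C'$, since two would create a second cycle), and these hanging trees partition the node set of $G'$. Summing the flow-conservation equations over each hanging tree rewrites its total supply in terms of the flows on its internal $E'_1$-arcs and on the two $C'$-arcs at its root; substituting these into the flow-conservation equations along $C'$ and solving the resulting cyclic system, together with the facts that every thick-arc variable equals $1-O(\delta)$ while every other basic variable (thin arcs and slacks) is $O(\delta)$, should force the $\ell$ thick arcs on $C'$ to carry total flow $1+\beta+\dots+\beta^{n-2}=(n-1)-O(\delta)$, hence $\ell=n-1$. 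But then $Q$ uses $n-1$ of the $n$ nodes, and the remaining node is an isolated directed cycle of length $1$ in $A_1$, i.e.\ a loop in $G$ --- impossible.

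The step I expect to be the main obstacle is the feasibility computation in part~(iii): getting the flow bookkeeping over the hanging trees of $B'$ organized so that all $O(\delta)$-sized contributions from thin arcs and slacks cancel, leaving an identity that pins down $\ell$ exactly rather than merely up to $O(\delta)$; the naive leading-order expansion is consistent with every $\ell$, so the argument has to exploit the interval constraint $0\le y_i\le\beta-\beta^{n-1}$ on the slacks (or go one order further in $\delta$). The same device, applied to an arbitrary feasible basis, should also yield that the extra cycle of any feasible basis contains at least one thin arc, which is what makes parts~(ii) and~(iii) fit together.
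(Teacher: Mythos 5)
For parts~(i) and~(ii) your argument coincides with the paper's. Part~(i) is Theorem~\ref{thm:GNF_bases} plus Theorem~\ref{thm:augmented_tree_bases}, and for part~(ii) the paper also deduces from Lemma~\ref{lem:nonempty_Vij} that the extra cycle contains an arc of $E'_2$, so that $B'\setminus Y'=A'_1\cup A'_2$ is a forest and therefore has at most one perfect matching (which is what the paper means by ``the unique perfect matching''). For part~(iii) the paper's proof is a one-line counting argument: add the $Y'$-arcs to $A'_1$ one at a time and assert that each addition merges two paths. You have correctly spotted that this tacitly assumes no $Y'$-arc ever closes a cycle, i.e.\ that $Y'\cup A'_1$ is acyclic; this does \emph{not} follow from (i) and (ii) alone, since the extra cycle of $B'$ could a priori consist only of $Y'$- and $A'_1$-arcs. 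So the gap you identify is genuine and the paper itself does not address it.

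Your attempt to close the gap, however, does not work. The pivotal claim---that feasibility would force the $\ell$ thick arcs on such a cycle $C'$ to carry total flow $1+\beta+\cdots+\beta^{n-2}$, hence $\ell=n-1$---is unjustified, and it fails on small examples. Take $n=4$ with $G$ containing the arcs $(1,2),(2,3),(3,2),(1,4),(4,1),(4,3)$, and set $A_1=\{(2,3),(3,2),(1,4),(4,1)\}$, $A_2=\{(1,2),(4,3)\}$, $Y=\{2,3\}$, $L=\{4\}$, $U=\emptyset$. The corresponding subgraph of $G'$ is a good augmented tree whose extra cycle $v_2\to w_2\to v_3\to w_3\to v_2$ lies entirely in $Y'\cup A'_1$, and solving the system gives
\begin{equation*}
x_{23}=\beta,\quad x_{32}=\beta^2,\quad x_{14}=\beta^2,\quad x_{41}=\beta^3,\quad x_{12}=1-\beta^2,\quad x_{43}=0,\quad y_2=\beta-\beta^3,\quad y_3=\beta^2-\beta^3,
\end{equation*}
which satisfies all sign and bound constraints for every $\beta\in(0,1)$. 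Here $\ell=2\ne n-1$ and the two thick arcs of $C'$ carry total flow $\beta+\beta^2$, not $1+\beta+\beta^2$, so your inequality does not hold. (This basis is degenerate: $x_{43}=0$ and $y_2$ sits at its upper bound. If the statement is to hold, a non-degeneracy hypothesis, or an argument that explicitly exploits the strict versions of the bound constraints, seems to be required; neither your sketch nor the paper's one-line claim supplies this.)
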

\begin{proof}
  Item~(\ref{item:good_augtree}) follows from
  Theorem~\ref{thm:augmented_tree_bases}. Theorem~\ref{thm:disjoint_cycle_structure}
  (\ref{item:thick_cycles}) implies that the arcs in $A'_1$ form a perfect matching in $G'$. With
  Lemma~\ref{lem:nonempty_Vij}, there is at least one arc of the form $(v_i,w_i)$ in the extra cycle
  of $B'$. As a consequence, $B'- Y'$ is a forest. Since a forest has at most one perfect matching,
  $A'_1$ is the unique perfect matching. The last statement can be verified by adding the arcs in
  $Y'$ one-by-one to $A'_1$. Clearly, we start with $n$ node-disjoint paths, each of them being a
  single arc. And then in each step the new arc from $Y'$ connects the endpoints of two paths,
  thereby reducing the number of paths by 1.
\end{proof}
The relation between the characterizations of feasible bases for the polytope $\whBeta(G)$ provided
in Theorems~\ref{thm:disjoint_cycle_structure} and~\ref{thm:bases_structure} is illustrated in Figure~\ref{fig:ex_bases}.
\begin{figure}
  \centering
  \begin{subfigure}[b]{\textwidth}          
          \centering 
	\begin{tikzpicture}[xscale=2.5,every node/.style={draw,circle,outer sep=1pt,inner
            sep=0pt,minimum size=1.5mm,fill=none}]
	\foreach \i in{1,2,4,5,6} {
	\node(v\i) at (60*\i:1) {};
      }
      \node[fill] (v3) at (180:1) {};
	\draw[->,thick] (v1) to (v2);
	\draw[->,thick] (v3) to (v2);
	\draw[->,thick] (v4) to (v3);
	\draw[->,thick] (v4) to (v5);
	\draw[->,thick] (v5) to (v6);
	\draw[->,thick, bend left=20] (v6) to (v1);
	\draw[->,thick, bend right=10] (v1) to (v5);
	\draw[->,thick] (v4) to (v1);
	\draw[->,thick] (v2) to (v4);
	\draw[->,thick, bend left=20] (v1) to (v6);
      \end{tikzpicture}
      \caption{An example graph where the black filling indicates node $1$.}\label{fig:graph_augmented_2}
    \end{subfigure}
    \begin{subfigure}[b]{.45\textwidth}
      \centering
      \begin{tikzpicture}[xscale=2.5,yscale=1.5,every node/.style={draw,circle,outer sep=1pt,inner
          sep=0pt,minimum size=1.5mm,fill=none}]		
        \node[fill=black] (v1) at (180:1) {};
        \useasboundingbox (-1.05,-1.2) rectangle (1.55,1.2);
                \node (v2) at (120:1) [label={[label distance=-.4cm]90:{\small $\beta-\beta^5$}}] {};
                \node[fill=gray] (v3) at (60:1) [label={[label distance=0cm]90:{\small $0$}}] {};
                \node (v4) at (0:1) [label={[label distance=.05cm]0:{\small $\beta^4-\beta^5$}}] {};
                \node (v5) at (-60:1) [label={[label distance=-.4cm]-90:{\small $\beta^3-\beta^5$}}] {};
                \node (v6) at (-120:1) [label={[label distance=-.4cm]-90:{\small $\beta^2-\beta^5$}}] {};
		\draw[->,very thick] (v1) to node[draw=none,left,inner sep=.1cm] {{\small $1$}} (v2);
		\draw[->,very thick] (v2) to node[draw=none,right,inner sep=.05cm] {{\small $\beta$}} (v6);
		\draw[->,very thick] (v6) to node[draw=none,below,inner sep=.1cm,near end] {{\small $\beta^{5}$}} (v1);
		\draw[->,very thick] (v3) to node[draw=none,left,inner sep=.05cm] {{\small
                    $\beta^{5}$}} (v5);
		\draw[->,very thick] (v5) to node[draw=none,below,inner sep=.1cm,near end] {{\small
                    $\beta^{3}$}} (v4);
		\draw[->,very thick] (v4) to node[draw=none,below,inner sep=.1cm] {{\small $\beta^{4}$}} (v3);
		\draw[->] (v6) to node[draw=none,above,rectangle] {{\small $\beta^2-\beta^{5}$}} (v5);
		\draw[->,bend left=45] (v3) to node[draw=none,above,inner sep=.1cm] {{\small
                    $0$}} (v4);                
		\end{tikzpicture}
		\caption{A feasible basis as a subgraph of $G$.}\label{fig:ex_bases_G}
	\end{subfigure}
	\begin{subfigure}[b]{.45\textwidth}
		\centering 
		\begin{tikzpicture}[scale=1,every node/.style={draw,circle,outer sep=1pt,inner
			sep=0pt,minimum size=1.5mm,fill=none}] 
		
                      \node[fill=black] (w1) at (-.5,0.35) {};
                      \node[fill=black] (v1) at (-.5,-.35) {};
                      \node(v2) at (1,1) {};
                      \node(w2) at (1.7,1) {};
                      \node(v6) at (1.7,-.8) {};
                      \node(w6) at (1.7,-1.5) {};		
                      \node(v5) at (2.7,-1.5) {};
                      \node(w5) at (3.4,-1.5) {};
                      \node(v4) at (5,-0.35) {};
                      \node(w4) at (5,.35) {};
                      \node[fill=gray] (v3) at (3.4,1) {};
                      \node[fill=gray] (w3) at (2.7,1) {};
		
		\draw[->,very thick] (w1) to (v2);
		\draw[->,very thick] (w2) to (v6);
		\draw[->,very thick] (w6) to (v1);
		\draw[->,very thick] (w3) to (v5);
		\draw[->,very thick] (w5) to (v4);
		\draw[->,very thick] (w4) to (v3);
		\draw[->] (w6) to (v5);
		\draw[->] (w3) to (v4);
		\draw[->,thick, dashed] (v2) to (w2);
		\draw[->,thick, dashed] (v4) to (w4);
		\draw[->,thick, dashed] (v5) to (w5);
		\draw[->,thick, dashed] (v6) to (w6);
                \useasboundingbox (-.6,-2) rectangle (5.1,1.1);
		\end{tikzpicture}
		\caption{The same basis as a subgraph of $G'$.}\label{fig:ex_bases_G'}
	\end{subfigure}
	\caption{A feasible basis $(B,L,U)$ for the graph in~(\subref{fig:graph_augmented_2}) is
          illustrated in~(\subref{fig:ex_bases_G}). Here $B$ contains the $x$-variables for the
          shown arcs and the $y$-variables for the empty nodes, while $L$ is the set with the grey
          node as its only element, and $U$ is the empty set. The basic feasible solution for this
          basis is indicated by the arc and node labels, and the thin and thick arcs are indicated
          by the strength of the lines. The good augmented tree in $G'$ corresponding to this basis
          is shown in~(\subref{fig:ex_bases_G'}). In particular, the thick arcs form a spanning
          collection of node-disjoint cycles in~(\subref{fig:ex_bases_G}), and a perfect matching in~(\subref{fig:ex_bases_G'}).}\label{fig:ex_bases}
\end{figure}
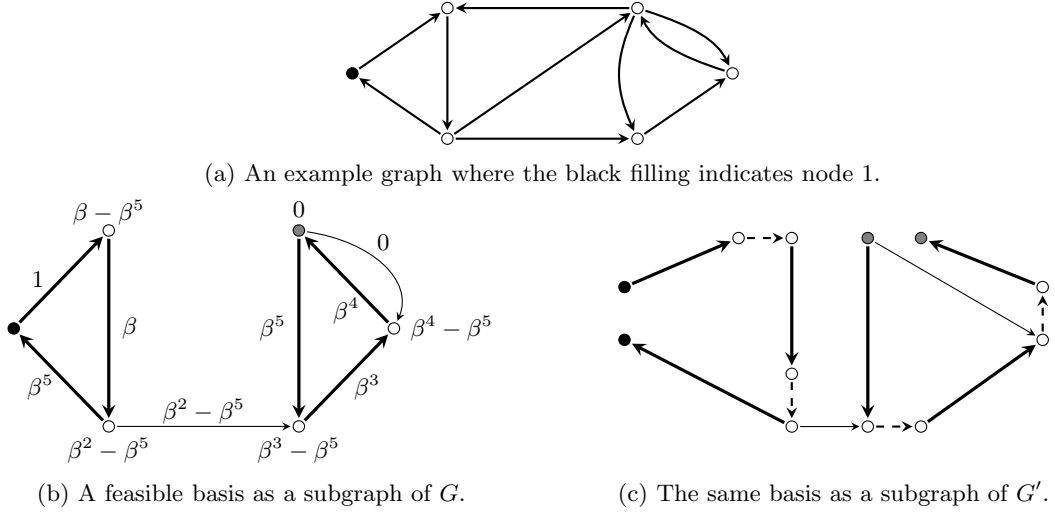


\section{A class of feasible bases}\label{sec:class_of_bases}
For the approach to the Hamiltonian Cycle Problem based on sampling extreme points of $\whBeta(G)$, an
essential ingredient is a lower bound for the fraction of Hamiltonian extreme points in the set of
all extreme points. In order to derive such an estimate it will be useful to understand the
structure of the feasible bases corresponding to a fixed subgraph of $G$ or $G'$. By
Theorem~\ref{thm:disjoint_cycle_structure}, a subgraph of $G$ which corresponds to a feasible basis
contains a spanning collection of node-disjoint directed cycles. In $G'$, this corresponds to a
perfect matching between the sets $\{w_i\,:\,i\in V\}$ and $\{v_i\,:\,i\in V\}$, and
Theorem~\ref{thm:bases_structure} provides the additional information that the subgraph
corresponding to the basic variables is a good augmented spanning tree.

In this section we make a further step by focusing on the feasible bases corresponding to a fixed
(isomorphism class of) good augmented tree in $G'$. More precisely, we characterize the feasible
bases whose corresponding good augmented tree is a Hamilton cycle in $G'$. This does not imply
that the basis is Hamiltonian in the sense that it corresponds to a Hamilton cycle in $G$ (see
Figure~\ref{fig:special_basis_1} for an illustration). From Section~\ref{subsec:gnf} we know that at
least one $y$-variable needs to be in the basis, that is, the good augmented tree must contain at
least one arc of the form $(v_i,w_i)$. We restrict our attention further to the case that the basis
contains exactly one $y$-variable, say $y_s$. Starting with the thick arcs $(w_i,v_j)$, and adding
the arc $(v_s,w_s)$ corresponding to the basic $y$-variable, we obtain $n-2$ isolated arcs and one
path of length $3$ which consists of the thick arc into $v_s$, the arc $(v_s,w_s)$ and the thick arc
leaving $w_s$. In order to obtain a Hamilton cycle, the thin arcs have to connect these
components in such a way that for every $i\in V\setminus\{s\}$, the node $w_i$ has exactly one
outgoing arc, and node $v_i$ has exactly one incoming arc. In $G$, this corresponds to every node
$i\in V\setminus\{s\}$ having exactly one thick and one thin outgoing arc, and exactly one thick and
one thin incoming arc, while the only arcs incident with node $s$ are the thick arcs into and out of
$s$. Up to relabeling the variables, we can then assume that the thick arc into node $s$ is the arc
$(s-1,s)$, and that the thick arc out of node $s$ is the $s$-th thick arc if we traverse the
Hamilton cycle in $G'$ starting with the thick arc out of $w_1$. More precisely, up to relabeling
the variables, we may assume that there is a fixed-point free permutation $\pi$ of $V$ such that the
following conditions are satisfied:

\vspace{-\topsep}
\begin{itemize}
\item The set of thick arcs is $\{(i,\pi(i))\,:\,i\in V\}$.
\item The set of thin arcs is $\left\{(i+1,\pi(i))\,:\,i\in[n]\setminus\{s\}\right\}$.
\item $\pi(i)=i+1$ if and only if $i=s-1$.
\end{itemize}\vspace{-\topsep}

In the last condition above, and throughout this section, whenever $i\in V$, $i\pm 1$ refers to the
node $i\pm 1\pmod n$. For a basis $(B,L,U)$, the set $B$ of basic variables is determined by the
permutation $\pi$, and we still have to choose a partition $V\setminus\{1,s\}=L\cup U$ of the set of
non-basic $y$-variables such that $y_i=0$ for $i\in L$ and
$y_i=\beta-\beta^{n-1}=(n-2)\delta-\binom{n-1}{2}\delta^2+O\left(\delta^3\right)$ for $i\in U$. In Figure~\ref{fig:special_bases}, we illustrate how a basis can be completely specified by a
picture, using different types of nodes to indicate if $i\in L$, $i\in U$ or $y_i$ is a basis variable. The general structure of the bases considered in this section is illustrated in Figure~\ref{fig:HC_one_yvar}.

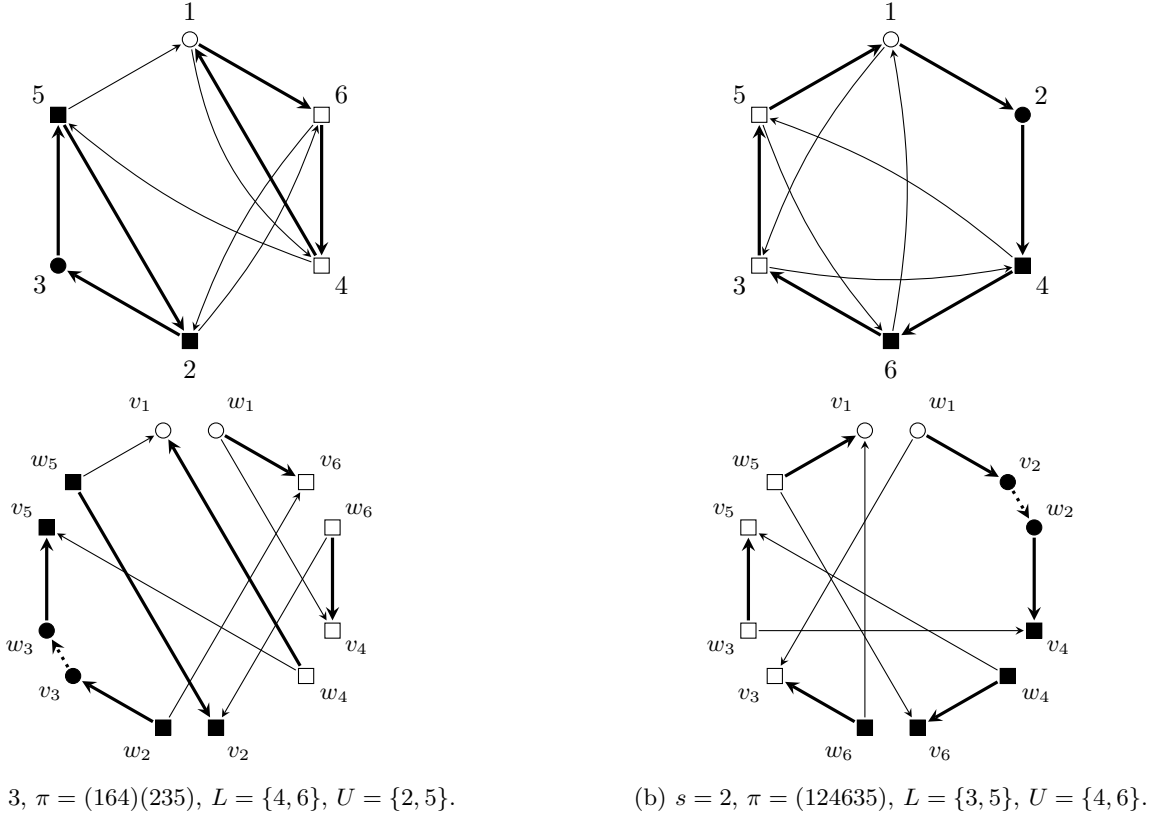
\begin{figure}[htb]
  \centering
  \begin{subfigure}{.45\linewidth}
    \centering
    \begin{tikzpicture}[scale=2,
      U/.style={rectangle,draw,fill=black,inner sep=0pt,outer sep=1pt,minimum size=2mm},
      L/.style={rectangle,draw,fill=none,inner sep=0pt,outer sep=1pt,minimum size=2mm},
      base/.style={circle,draw,fill=black,inner sep=0pt,outer sep=1pt,minimum size=2mm},
      start/.style={circle,draw,fill=none,inner sep=0pt,outer sep=1pt,minimum size=2mm}]
    \node[start,label={[label distance=-0cm]90:$1$}] (v1) at (90:1) {};
    \node[L,label={[label distance=-.1cm]30:$6$}] (v2) at (30:1) {};
    \node[L,label={[label distance=-.1cm]-30:$4$}] (v3) at (-30:1) {};
    \node[U,label={[label distance=-0cm]-90:$2$}] (v4) at (-90:1) {};
    \node[base,label={[label distance=-.1cm]-150:$3$}] (v5) at (-150:1) {};
    \node[U,label={[label distance=-.1cm]-210:$5$}] (v6) at (-210:1) {};    
    \draw[very thick,->] (v1) -- (v2);
    \draw[very thick,->] (v2) -- (v3);
    \draw[very thick,->] (v3) -- (v1);
    \draw[very thick,->] (v4) -- (v5);
    \draw[very thick,->] (v5) -- (v6);
    \draw[very thick,->] (v6) -- (v4);    
    \draw[bend right=20,->] (v1) to (v3);
    \draw[bend right=10,->] (v2) to (v4);
    \draw[bend right=-10,->] (v3) to (v6);
    \draw[bend right=10,->] (v4) to (v2);
    \draw[bend right=0,->] (v6) to (v1);    
    \end{tikzpicture} \hfill 
    \centering
    \begin{tikzpicture}[scale=2,
      U/.style={rectangle,draw,fill=black,inner sep=0pt,outer sep=1pt,minimum size=2mm},
      L/.style={rectangle,draw,fill=none,inner sep=0pt,outer sep=1pt,minimum size=2mm},
      base/.style={circle,draw,fill=black,inner sep=0pt,outer sep=1pt,minimum size=2mm},
      start/.style={circle,draw,fill=none,inner sep=0pt,outer sep=1pt,minimum size=2mm}]
      \node[start,label={[label distance=-0cm]100:{\small $v_1$}}] (v1) at (100:1) {};
      \node[start,label={[label distance=-0cm]80:{\small $w_1$}}] (w1) at (80:1) {};
      \node[L,label={[label distance=-.1cm]40:{\small $v_6$}}] (v2) at (40:1) {};
      \node[L,label={[label distance=-.1cm]20:{\small $w_6$}}] (w2) at (20:1) {};
      \node[L,label={[label distance=-.1cm]-20:{\small $v_4$}}] (v3) at (-20:1) {};
      \node[L,label={[label distance=-.1cm]-40:{\small $w_4$}}] (w3) at (-40:1) {};
      \node[U,label={[label distance=-0cm]-80:{\small $v_2$}}] (v4) at (-80:1) {};
      \node[U,label={[label distance=-0cm]-100:{\small $w_2$}}] (w4) at (-100:1) {};
      \node[base,label={[label distance=-.1cm]-140:{\small $v_3$}}] (v5) at (-140:1) {};
      \node[base,label={[label distance=-.1cm]-160:{\small $w_3$}}] (w5) at (-160:1) {};
      \node[U,label={[label distance=-.1cm]-200:{\small $v_5$}}] (v6) at (-200:1) {};
      \node[U,label={[label distance=-.1cm]-220:{\small $w_5$}}] (w6) at (-220:1) {};    
    \draw[very thick,->] (w1) -- (v2);
    \draw[very thick,->] (w2) -- (v3);
    \draw[very thick,->] (w3) -- (v1);
    \draw[very thick,->] (w4) -- (v5);
    \draw[very thick,->] (w5) -- (v6);
    \draw[very thick,->] (w6) -- (v4);    
    \draw[bend right=0,->] (w1) to (v3);
    \draw[bend right=0,->] (w2) to (v4);
    \draw[bend right=0,->] (w3) to (v6);
    \draw[bend right=0,->] (w4) to (v2);
    \draw[bend right=0,->] (w6) to (v1);
    \draw[very thick,dotted,bend right=0,->] (v5) to (w5);    
    \end{tikzpicture} \caption{$s=3$, $\pi=(164)(235)$, $L=\{4,6\}$, $U=\{2,5\}$.}\label{fig:special_basis_1}
  \end{subfigure}\hfill
\centering
\begin{subfigure}{.45\linewidth}
	\centering
	\begin{tikzpicture}[scale=2,
	U/.style={rectangle,draw,fill=black,inner sep=0pt,outer sep=1pt,minimum size=2mm},
	L/.style={rectangle,draw,fill=none,inner sep=0pt,outer sep=1pt,minimum size=2mm},
	base/.style={circle,draw,fill=black,inner sep=0pt,outer sep=1pt,minimum size=2mm},
	start/.style={circle,draw,fill=none,inner sep=0pt,outer sep=1pt,minimum size=2mm}]
	\node[start,label={[label distance=-0cm]90:$1$}] (v1) at (90:1) {};
	\node[base,label={[label distance=-.1cm]30:$2$}] (v2) at (30:1) {};
	\node[U,label={[label distance=-.1cm]-30:$4$}] (v3) at (-30:1) {};
	\node[U,label={[label distance=-0cm]-90:$6$}] (v4) at (-90:1) {};
	\node[L,label={[label distance=-.1cm]-150:$3$}] (v5) at (-150:1) {};
	\node[L,label={[label distance=-.1cm]-210:$5$}] (v6) at (-210:1) {};    
	\draw[very thick,->] (v1) -- (v2);
	\draw[very thick,->] (v2) -- (v3);
	\draw[very thick,->] (v3) -- (v4);
	\draw[very thick,->] (v4) -- (v5);
	\draw[very thick,->] (v5) -- (v6);
	\draw[very thick,->] (v6) -- (v1);    
	\draw[bend right=10,->] (v5) to (v3);
	\draw[bend right=10,->] (v3) to (v6);
	\draw[bend right=10,->] (v6) to (v4);
	\draw[bend right=10,->] (v4) to (v1);
	\draw[bend right=10,->] (v1) to (v5);    
	\end{tikzpicture} \hfill
	\centering
	\begin{tikzpicture}[scale=2,
	U/.style={rectangle,draw,fill=black,inner sep=0pt,outer sep=1pt,minimum size=2mm},
	L/.style={rectangle,draw,fill=none,inner sep=0pt,outer sep=1pt,minimum size=2mm},
	base/.style={circle,draw,fill=black,inner sep=0pt,outer sep=1pt,minimum size=2mm},
	start/.style={circle,draw,fill=none,inner sep=0pt,outer sep=1pt,minimum size=2mm}]
	\node[start,label={[label distance=-0cm]100:{\small $v_1$}}] (v1) at (100:1) {};
	\node[start,label={[label distance=-0cm]80:{\small $w_1$}}] (w1) at (80:1) {};
	\node[base,label={[label distance=-.1cm]40:{\small $v_2$}}] (v2) at (40:1) {};
	\node[base,label={[label distance=-.1cm]20:{\small $w_2$}}] (w2) at (20:1) {};
	\node[U,label={[label distance=-.1cm]-20:{\small $v_4$}}] (v3) at (-20:1) {};
	\node[U,label={[label distance=-.1cm]-40:{\small $w_4$}}] (w3) at (-40:1) {};
	\node[U,label={[label distance=-0cm]-80:{\small $v_6$}}] (v4) at (-80:1) {};
	\node[U,label={[label distance=-0cm]-100:{\small $w_6$}}] (w4) at (-100:1) {};
	\node[L,label={[label distance=-.1cm]-140:{\small $v_3$}}] (v5) at (-140:1) {};
	\node[L,label={[label distance=-.1cm]-160:{\small $w_3$}}] (w5) at (-160:1) {};
	\node[L,label={[label distance=-.1cm]-200:{\small $v_5$}}] (v6) at (-200:1) {};
	\node[L,label={[label distance=-.1cm]-220:{\small $w_5$}}] (w6) at (-220:1) {};    
	\draw[very thick,->] (w1) -- (v2);
	\draw[very thick,->] (w2) -- (v3);
	\draw[very thick,->] (w3) -- (v4);
	\draw[very thick,->] (w4) -- (v5);
	\draw[very thick,->] (w5) -- (v6);
	\draw[very thick,->] (w6) -- (v1);    
	\draw[bend right=0,->] (w5) to (v3);
	\draw[bend right=0,->] (w3) to (v6);
	\draw[bend right=0,->] (w4) to (v1);
	\draw[bend right=0,->] (w1) to (v5);
	\draw[bend right=0,->] (w6) to (v4);
	\draw[very thick,dotted,bend right=0,->] (v2) to (w2);    
      \end{tikzpicture} \caption{$s=2$, $\pi=(124635)$, $L=\{3,5\}$, $U=\{4,6\}$.}\label{fig:special_basis_2}
\end{subfigure}
\caption{A quasi-Hamiltonian feasible basis (right) and a non-quasi-Hamiltonian (left) feasible
  basis for $n=6$ vertices. In the top pictures the bases are shown as subgraphs of $G$, and node
  types indicate the sets $L$ (empty squares), $U$ (filled squares), basic $y$-variable (filled
  circle), and node $1$ (empty circle). The pictures on the bottom illustrate that the corresponding
  good augmented tree is indeed a Hamilton cycle in $G'$.}\label{fig:special_bases}
\end{figure}

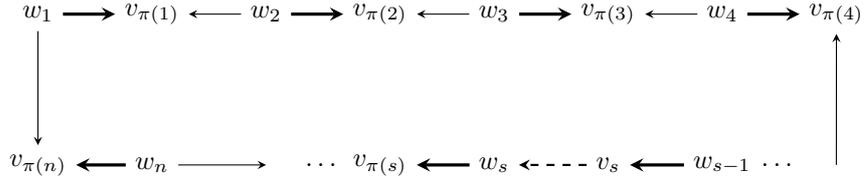
\begin{figure}[htb]
  \centering
  \begin{tikzpicture}[xscale=1.5]
	\node (w1) at (0,0) {$w_{1}$};
	\node (v1) at (1,0) {$v_{\pi(1)}$};
	
	\node (w2) at (2,0) {$w_{2}$};
	\node (v2) at (3,0) {$v_{\pi(2)}$};
   
	\node (w3) at (4,0) {$w_{3}$};
	\node (v3) at (5,0) {$v_{\pi(3)}$};
	
	\node (w4) at (6,0) {$w_{4}$};
	\node (v4) at (7,0) {$v_{\pi(4)}$};
	
	\node (wq) at (6,-2) {$w_{s-1}$};
	\node (vs) at (5,-2) {$v_{s}$};
	\node (ws) at (4,-2) {$w_{s}$};
	\node (vq) at (3,-2) {$v_{\pi(s)}$};
	
	\node (wk) at (1,-2) {$w_{n}$};
	\node (vk) at (0,-2) {$v_{\pi(n)}$};
	
	\draw[very thick,->] (w1) -- (v1);
	\draw[very thick,->] (w2) -- (v2);
	\draw[very thick,->] (w3) -- (v3);
	\draw[very thick,->] (w4) -- (v4);
	\draw[very thick,->] (wq) -- (vs);
	\draw[thick,dashed,->] (vs) -- (ws);
	\draw[very thick,->] (ws) -- (vq);
	\draw[very thick,->] (wk) -- (vk);

	\draw[->] (w2) -- (v1);
	\draw[->] (w3) -- (v2);
	\draw[->] (w4) -- (v3);
	\draw[->] (7,-2) -- (v4);
	\draw[->] (w1) -- (vk);
	\draw[->] (wk) -- (2,-2);
	\node at (2.5,-2) {\dots};
	\node at (6.5,-2) {\dots};
  \end{tikzpicture}
  \caption{The structure of the bases considered in this section. The dashed arc indicates the $y$-variable.}
  \label{fig:HC_one_yvar}
\end{figure}

Our aim is to characterize the 4-tuples $(s,\pi,L,U)$ of a node $s\in\{2,3,\dots,n\}$, a fixed-point
free permutation $\pi$ with $\pi(i)=i+1$ if and only if $i=s-1$, and a partition
$[n]=\{1,s\}\cup L\cup U$, such that the basis given by the triple $(B,L,U)$ is feasible,
where $B=\{s\}\cup\{(i,\pi(i))\,:\,i\in[n]\}\cup\{(i,\pi(i-1))\,:\,i\in[n]\setminus\{s\}\}$.
The cycles of the permutation $\pi$ correspond to the thick cycles in $B$. In particular,
the basis $(B,L,U)$ is quasi-Hamiltonian if and only if the permutation $\pi$ is a single cycle.

In order to simplify the notation, we introduce new variables. Let $\xi_i=x_{i\,\pi(i)}$, $i\in[n]$
be the flow on the thick arc leaving node $i$, and let $\eta_i=x_{i+1\,\pi(i)}$,
$i\in[n]\setminus\{s-1\}$ be the flow on the thin arc entering node
$\pi(i)$. This is illustrated in Figure~\ref{fig:xi_eta}.
For all $i\in[n]\setminus\{s\}$, $\xi_i+\eta_{i-1}=\Psi(i)$, and
$\xi_{\pi^{-1}(i)}+\eta_{\pi^{-1}(i)}=\Phi(i)$, and substituting the values of $\Phi(i)$ and
$\Psi(i)$ leads to the following representation of the
system~\eqref{eq_wh:flow_extraction}--\eqref{eq_wh:flow_bounds}:
\begin{align}
  \xi_s-\beta\xi_{s-1} &= 0,\label{eq:simplified_constraints_0}\\
  \xi_{\pi^{-1}(1)}+\eta_{\pi^{-1}(1)} &=\beta^{n-1}, \label{eq:simplified_constraints_1a}\\
  \xi_{\pi^{-1}(i)}+\eta_{\pi^{-1}(i)} &=\beta^{n-2} && i\in
                                                        L,\label{eq:simplified_constraints_1b}\\
  \xi_{\pi^{-1}(i)}+\eta_{\pi^{-1}(i)} &=1 && i\in U,\label{eq:simplified_constraints_1c}\\
  \xi_1+\eta_n &= 1,\label{eq:simplified_constraints_2a}\\
  \xi_i+\eta_{i-1} &=\beta^{n-1} && i\in L,\label{eq:simplified_constraints_2b}\\
  \xi_i+\eta_{i-1} &=\beta && i\in U,\label{eq:simplified_constraints_2c}\\
  \xi_s-y_s&=\beta^{n-1}.\label{eq:simplified_constraints_3}
\end{align}

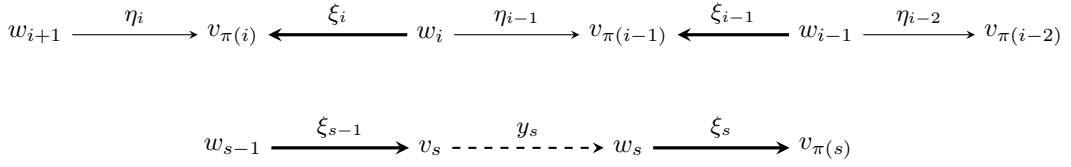
\begin{figure}[htb]
	\centering
	\begin{tikzpicture}[xscale=1.3]
	\node (w1) at (0,0) {$w_{i+1}$};
	\node (v1) at (2,0) {$v_{\pi(i)}$};
	\node (w2) at (4,0) {$w_{i}$};
	\node (v2) at (6,0) {$v_{\pi(i-1)}$};
	\node (w3) at (8,0) {$w_{i-1}$};
	\node (v3) at (10,0) {$v_{\pi(i-2)}$};
	\draw[->] (w1) to node[above] {{\small $\eta_{i}$}} (v1);
	\draw[->] (w2) to node[above] {{\small $\eta_{i-1}$}} (v2);
	\draw[very thick,->] (w2) to node[above] {{\small $\xi_{i}$}} (v1);
	\draw[->] (w3) to node[above] {{\small $\eta_{i-2}$}} (v3);
	\draw[very thick,->] (w3) to node[above] {{\small $\xi_{i-1}$}} (v2);
	\node (ws) at (6,-1.5) {$w_{s}$};
	\node (vs) at (4,-1.5) {$v_{s}$};
	\node (vs1) at (8,-1.5) {$v_{\pi(s)}$};
	\node (ws1) at (2,-1.5) {$w_{s-1}$};
	\draw[very thick,->] (ws) to node[above] {{\small $\xi_{s}$}} (vs1);
	\draw[very thick,->] (ws1) to node[above] {{\small $\xi_{s-1}$}} (vs);
	\draw[thick,dashed,->] (vs) to node[above] {{\small $y_{s}$}} (ws);
	\end{tikzpicture}
	\caption{The variables $\xi_i$ and $\eta_i$}
	\label{fig:xi_eta}
\end{figure}

\begin{lemma}\label{lem:characterisation_eta}
  The quadruple $(s,\pi,L,U)$ encodes a feasible basis if and only if the unique solution of
  system~(\ref{eq:simplified_constraints_0})--(\ref{eq:simplified_constraints_3}) satisfies
  $\eta_i\geq 0$ for all $i\in[n]\setminus\{s-1\}$, and $\beta^{n-1}\leq\xi_s\leq\beta$.
\end{lemma}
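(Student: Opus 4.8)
The plan is to reduce feasibility of the basis $(B,L,U)$ to a finite list of bound constraints on the basic variables, to eliminate all but the two families appearing in the statement, and then to show that the remaining inequalities are redundant by exploiting that we work in the ultimate regime $\beta=1-\delta$ with $\delta$ small.

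First, feasibility of $(B,L,U)$ is a condition only on the basic variables: the non-basic $x$-variables equal $0$, and the non-basic $y$-variables equal $0$ (for $i\in L$) or $\beta-\beta^{n-1}$ (for $i\in U$), so they all satisfy \eqref{eq_wh:slacks} and \eqref{eq_wh:nonnegativity} automatically. The basic variables are precisely $\xi_1,\dots,\xi_n$, the $\eta_i$ with $i\in[n]\setminus\{s-1\}$, and $y_s$, so $(B,L,U)$ is feasible if and only if the unique solution of \eqref{eq:simplified_constraints_0}--\eqref{eq:simplified_constraints_3} satisfies $\xi_i\geq 0$ for all $i\in[n]$, $\eta_i\geq 0$ for all $i\in[n]\setminus\{s-1\}$, and $0\leq y_s\leq\beta-\beta^{n-1}$. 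Substituting $y_s=\xi_s-\beta^{n-1}$ from \eqref{eq:simplified_constraints_3}, the last condition becomes $\beta^{n-1}\leq\xi_s\leq\beta$. With this, one implication of the lemma is immediate, and for the other it remains to show that $\eta_i\geq 0$ for all $i\neq s-1$ together with $\beta^{n-1}\leq\xi_s\leq\beta$ force $\xi_i\geq 0$ for all $i\in[n]$.

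To this end I extract a recursion among the $\xi_i$. For $i\in[n]\setminus\{s\}$ the outflow equation at node $i$ (one of \eqref{eq:simplified_constraints_2a}--\eqref{eq:simplified_constraints_2c}) reads $\xi_i+\eta_{i-1}=\Psi(i)$ with $\Psi(i)\in\{1,\beta,\beta^{n-1}\}$, and the inflow equation at node $\pi(i-1)$ (one of \eqref{eq:simplified_constraints_1a}--\eqref{eq:simplified_constraints_1c}, available since $\pi(i-1)\neq s$) reads $\xi_{i-1}+\eta_{i-1}=\Phi(\pi(i-1))$ with $\Phi(\pi(i-1))\in\{1,\beta^{n-2},\beta^{n-1}\}$. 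Eliminating $\eta_{i-1}$ yields $\xi_i=\xi_{i-1}+(\Psi(i)-\Phi(\pi(i-1)))$ for all $i\neq s$, while \eqref{eq:simplified_constraints_0} gives $\xi_{s-1}=\xi_s/\beta$. Unrolling the recursion along the cyclic order $s,s+1,\dots,n,1,\dots,s-1$ writes each $\xi_k$ with $k\neq s$ as $\xi_s$ plus a sum of at most $n-1$ terms $\Psi(i)-\Phi(\pi(i-1))$, each of which lies in $[\beta^{n-1}-1,\,1-\beta^{n-1}]$. Hence $\xi_k\geq\xi_s-(n-1)(1-\beta^{n-1})\geq\beta^{n-1}-(n-1)(1-\beta^{n-1})$, while trivially $\xi_s\geq\beta^{n-1}>0$ and $\xi_{s-1}=\xi_s/\beta>0$.

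Finally, $\beta^{n-1}-(n-1)(1-\beta^{n-1})>0$ is equivalent to $\beta^{n-1}>1-1/n$, which holds once $\beta$ is close enough to $1$; equivalently, since $n$ is fixed, each $\xi_k=\xi_s+O(\delta)=1-O(\delta)$ is positive for $\delta$ small. By Proposition~\ref{prop:ultimate_bases} the membership of $(B,L,U)$ in $\Gamma(G)$ can be checked at any $\beta\geq\alpha^*$, so we may take $\beta$ this close to $1$, and conclude $\xi_i>0$ for all $i$, which completes the proof. The main obstacle is exactly this last redundancy argument: the positivity conditions $\xi_i\geq 0$ are not purely algebraic consequences of the other conditions for a general fixed $\beta$, and their redundancy genuinely uses the ultimate regime, where every $\xi_i$ is pinned within $O(1-\beta)$ of $1$ so that the positivity of $\xi_s$ alone propagates to all of them.
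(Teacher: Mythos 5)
Your proof is correct and follows essentially the same route as the paper: reduce feasibility to the three families of bound constraints $\xi_i\geq 0$, $\eta_i\geq 0$, $\beta^{n-1}\leq\xi_s\leq\beta$, and then show the first family is redundant by propagating the bound on $\xi_s$ around the cycle via the telescoping relation $\xi_i-\xi_{i-1}=\Psi(i)-\Phi(\pi(i-1))$. The only difference is cosmetic: the paper writes the increment as $O(\delta)$ and concludes $\xi_i=1+O(\delta)>0$ by induction, whereas you make the bound explicit, $\xi_k\geq\beta^{n-1}-(n-1)(1-\beta^{n-1})>0$ once $\beta^{n-1}>1-1/n$, which is a cleaner justification of the same step.
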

\begin{proof}
  If the quadruple $(s,\pi,L,U)$ encodes a feasible basis, then $\eta_i\geq 0$ for all
  $i\in[n]\setminus\{s-1\}$, and $0\leq y_s\leq\beta-\beta^{n-1}$, or equivalently,
  $\beta^{n-1}\leq\xi_s\leq\beta$. For the converse, let $(\vect{\xi},\vect{\eta})$ be the solution
  of the system~\eqref{eq:simplified_constraints_0}--\eqref{eq:simplified_constraints_3} for
  $(s,\pi,L,U)$, and assume that $\eta_i\geq 0$ for all $i\in[n]\setminus\{s-1\}$, and
  $\beta^{n-1}\leq\xi_s\leq\beta$. To verify feasibility, we only need to show that $\xi_i\geq 0$
  for all $i\in[n]$. For $i=s$, this is an immediate consequence of the assumption
  $\beta^{n-1}\leq\xi_s\leq\beta$ which implies $\xi_s=1+O(\delta)$. Using that the right hand sides
  of equations~\eqref{eq:simplified_constraints_1a} to~\eqref{eq:simplified_constraints_2c} are all
  $1+O(\delta)$, we obtain, for every $i\in[n]\setminus\{s\}$
  \[\xi_i=-\eta_{i-1}+1+O(\delta)=-\left(-\xi_{i-1}+1+O(\delta)\right)+1+O(\delta)=\xi_{i-1}+O(\delta),\]
  and by induction $\xi_i=1+O(\delta)>0$ for all $i\in [n]$.
\end{proof}
Lemma~\eqref{lem:characterisation_eta} implies that in order to characterize the feasible bases, we
need to derive necessary and sufficient conditions for $\eta_i \geq 0$ for all
$i\in [n]\setminus\{s-1\}$ and $\beta^{n-1}\leq\xi_s\leq\beta$. We start by finding necessary
conditions. Expressing $\xi_s$ in terms of $\abs{L}$ and $\abs{U}$ (Lemma~\ref{lem:xi_s}) allows us
to specify $\abs{L}$ and $\abs{U}$ (Lemma~\ref{lem:cardinality_LU}), and deduce that for every
feasible basis $\pi(s)\in U$ (Lemma~\ref{lem:nodes_of_long_path}). As a consequence, we find
asymptotic expressions for $\eta_s$ (Lemma~\ref{lem:value_eta_s}). Eliminating the $\xi$-variables
from the system~\eqref{eq:simplified_constraints_0}--\eqref{eq:simplified_constraints_3} leads to a
recursion for the $\eta_i$, where the base case is given by the value $\eta_s$ determined in
Lemma~\ref{lem:value_eta_s}. The asymptotic expression obtained from solving this recursion is given
in Lemma~\ref{lem:asymptotic_eta}. Assuming that $L$ and $U$ have the right cardinalities and
$\pi(s)\in U$, the second order term of $\eta_i$ is positive whenever the first order term vanishes
(Lemma~\ref{lem:second_order}). Theorem~\ref{thm:char_1} combines the necessary conditions obtained
so far into a characterization of feasible $(s,\pi,L,U)$ by three properties: (i) the cardinality
condition for $L$ and $U$, (ii) $\pi(s)\in U$, and (iii) the non-negativity of the linear terms in
the asymptotic expansions of the variables $\eta_i$.

\begin{lemma}\label{lem:xi_s}
  Let $(\vect\xi,\vect\eta)$ be the solution of the
  system~(\ref{eq:simplified_constraints_0})--(\ref{eq:simplified_constraints_3}) for
  $(s,\pi,L,U)$. Then
  \[\xi_s=1-\left[\binom{n}{2}-\abs{U}-(n-1)\abs{L}\right]\delta+\left[\binom{n}{3}-\binom{n-1}{2}\abs{L}\right]\delta^2+O\left(\delta^3\right).\]  
\end{lemma}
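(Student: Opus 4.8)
The plan is to recover $\xi_s$ by evaluating the total outflow $\sum_{i=1}^{n}\Psi(i)$ in two different ways. First, the solution $(\vect{\xi},\vect{\eta})$ of~\eqref{eq:simplified_constraints_0}--\eqref{eq:simplified_constraints_3} corresponds to the point of $\reals^{\abs{E}}$ with $x_{i\,\pi(i)}=\xi_i$, $x_{i+1\,\pi(i)}=\eta_i$ and all remaining coordinates zero, and by construction this point satisfies the equality constraints~\eqref{eq_wh:flow_extraction}--\eqref{eq_wh:flow_bounds}. Since the proof of Lemma~\ref{lem:total_outflow} uses only these equalities (through $\Psi(1)=1$, $\Phi(1)=\beta^{n-1}$ and $\Psi(i)=\beta\Phi(i)$ for $i\neq 1$), the same computation applies verbatim and gives
\[\sum_{i=1}^{n}\Psi(i)=\binom{n}{1}-\binom{n}{2}\delta+\binom{n}{3}\delta^2-\dots+(-1)^{n-1}\binom{n}{n}\delta^{n-1}.\]

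Second, I would compute $\sum_{i=1}^{n}\Psi(i)$ node by node using the description of $B$ given before the lemma. Node $1$ contributes $\Psi(1)=\xi_1+\eta_n=1$ by~\eqref{eq:simplified_constraints_2a}. The only basic arc leaving node $s$ is the thick arc $(s,\pi(s))$, since the putative thin arc out of $s$ would be the loop $(s,\pi(s-1))=(s,s)$; hence $\Psi(s)=\xi_s$. For the remaining nodes, $[n]\setminus\{1,s\}=L\cup U$ is a disjoint union, and~\eqref{eq:simplified_constraints_2b} and~\eqref{eq:simplified_constraints_2c} give $\Psi(i)=\beta^{n-1}$ for $i\in L$ and $\Psi(i)=\beta$ for $i\in U$. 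Therefore $\sum_{i=1}^{n}\Psi(i)=1+\xi_s+\abs{L}\beta^{n-1}+\abs{U}\beta$.

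Equating the two expressions and solving yields $\xi_s=\bigl(\sum_{i=1}^{n}\Psi(i)\bigr)-1-\abs{L}\beta^{n-1}-\abs{U}\beta$. It then remains to substitute $\beta=1-\delta$, expand $\beta^{n-1}=1-(n-1)\delta+\binom{n-1}{2}\delta^2+O(\delta^3)$, and use the identity $\abs{L}+\abs{U}=n-2$ (which holds because $[n]=\{1,s\}\cup L\cup U$ is a partition) to see that the constant term collapses to $1$; reading off the coefficients of $\delta$ and $\delta^2$ then gives the claimed expansion, the omitted terms being $O(\delta^3)$ uniformly in $(s,\pi,L,U)$. There is no essential obstacle here beyond the second-order bookkeeping: the $\binom{n}{3}$ originates from the total-outflow expansion and the $-\binom{n-1}{2}\abs{L}$ from expanding $\abs{L}\beta^{n-1}$, while $\abs{U}\beta$ contributes nothing at order $\delta^2$. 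In particular this route avoids solving the recursion for the $\eta_i$ altogether.
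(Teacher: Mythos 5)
Your proposal is correct and follows essentially the same route as the paper: both equate the expression from Lemma~\ref{lem:total_outflow} with the node-by-node sum $\sum_i\Psi(i)=1+\xi_s+\abs{L}\beta^{n-1}+\abs{U}\beta$, substitute $\beta=1-\delta$, and solve for $\xi_s$ using $\abs{L}+\abs{U}=n-2$. Your remark that Lemma~\ref{lem:total_outflow} still applies because its proof uses only the equality constraints (so the conclusion holds for basic solutions even when infeasible) is a small but worthwhile clarification that the paper leaves implicit.
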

\begin{proof}
 We have
  \begin{multline*}
    n-\binom{n}{2}\delta+\binom{n}{3}\delta^2+O\left(\delta^3\right)=\sum_{i=1}^n\Psi(i)=1+\xi_s+\abs{L}\beta^{n-1}+\abs{U}\beta\\
    =1+\abs{L}+\abs{U}-\left(\abs{U}+(n-1)\abs{L}\right)\delta+\abs{L}\binom{n-1}{2}\delta^2+\xi_s+O\left(\delta^3\right)\\
    =n-1-\left(\abs{U}+(n-1)\abs{L}\right)\delta+\abs{L}\binom{n-1}{2}\delta^2+\xi_s+O\left(\delta^3\right),
    \end{multline*}
    where the first equation comes from Lemma~\ref{lem:total_outflow}, the second one follows from
    $\Psi(1)=1$, $\Psi(s)=\xi_s$, $\Psi(i)=\beta^{n-1}$ for $i\in L$ and $\Psi(i)=\beta$ for $i\in
    U$, the third one from substituting $1-\delta$ for $\beta$, and the last one from
    $[n]=\{1,s\}\cup L\cup U$. The claim follows by solving for $\xi_s$. 
  \end{proof}
\begin{lemma}\label{lem:cardinality_LU}
  If $(s,\pi,L,U)$ encodes a feasible basis, then $\abs{U} = \lfloor (n-2)/2\rfloor$ and
  $\abs{L} = \lceil (n-2)/2\rceil$. 
\end{lemma}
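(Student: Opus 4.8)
The plan is to combine the feasibility criterion of Lemma~\ref{lem:characterisation_eta} with the asymptotic formula for $\xi_s$ in Lemma~\ref{lem:xi_s}. Since $[n]=\{1,s\}\cup L\cup U$ is a partition, we have $\abs{L}+\abs{U}=n-2$, so it is enough to determine $\abs{L}$. By Lemma~\ref{lem:characterisation_eta}, feasibility of $(s,\pi,L,U)$ forces $\beta^{n-1}\le\xi_s\le\beta$, and my strategy is to read off constraints on $\abs{L}$ by comparing the $\delta$-expansions of these three quantities.

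First I would eliminate $\abs{U}$ using $\abs{U}=n-2-\abs{L}$, writing Lemma~\ref{lem:xi_s} in the form $\xi_s=1-c_1\delta+c_2\delta^2+O(\delta^3)$ with $c_1=\tfrac{n^2-3n+4}{2}-(n-2)\abs{L}$ and $c_2=\binom{n}{3}-\binom{n-1}{2}\abs{L}$. Comparing with $\beta=1-\delta$ and $\beta^{n-1}=1-(n-1)\delta+\binom{n-1}{2}\delta^2+O(\delta^3)$, and using that $\delta\to 0^{+}$, the inequalities $\xi_s\le\beta$ and $\xi_s\ge\beta^{n-1}$ force the linear coefficients to satisfy $1\le c_1\le n-1$, equivalently $(n-3)/2\le\abs{L}\le(n-1)/2$. (The same two-sided bound also follows immediately from Theorem~\ref{thm:disjoint_cycle_structure}(\ref{item:bounds_LU}) together with $\abs{L}+\abs{U}=n-2$.) When $n$ is even, the only integer in this range is $(n-2)/2$, so $\abs{L}=(n-2)/2=\lceil(n-2)/2\rceil$ and $\abs{U}=(n-2)/2=\lfloor(n-2)/2\rfloor$, as claimed.

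For odd $n$ the range contains two integers, $(n-3)/2$ and $(n-1)/2$, and the remaining task is to exclude $\abs{L}=(n-3)/2$. In that case $c_1$ equals exactly $n-1$, so the bound $\xi_s\ge\beta^{n-1}$ is tight at first order and its validity for small $\delta$ is governed by the $\delta^2$-coefficients: feasibility would demand $c_2\ge\binom{n-1}{2}$. I would then compute $c_2=\binom{n}{3}-\binom{n-1}{2}\cdot\tfrac{n-3}{2}=\tfrac{(n-1)(n-2)(9-n)}{12}$ and note that $c_2<\binom{n-1}{2}=\tfrac{(n-1)(n-2)}{2}$ exactly when $n>3$, so for odd $n\ge 5$ we would get $\xi_s<\beta^{n-1}$ for all sufficiently small $\delta$, contradicting feasibility. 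Hence $\abs{L}=(n-1)/2=\lceil(n-2)/2\rceil$ and $\abs{U}=(n-3)/2=\lfloor(n-2)/2\rfloor$. The few genuinely small cases cause no trouble: for $n\le 3$ there is no fixed-point-free permutation $\pi$ on $[n]$ with $\pi(i)=i+1$ precisely for $i=s-1$, so the statement is vacuous there.

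I expect the second-order analysis of the boundary value $\abs{L}=(n-3)/2$ to be the only real obstacle. One must check carefully that the $\delta^0$- and $\delta^1$-contributions to $\xi_s-\beta^{n-1}$ cancel identically, so that the sign of this difference for small $\delta$ is decided by a single binomial expression, and then evaluate that expression; everything else is routine bookkeeping with the expansion already provided by Lemma~\ref{lem:xi_s}.
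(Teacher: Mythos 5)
Your proof is correct and follows essentially the same route as the paper: narrow $\abs{L}$ to the range $\{(n-3)/2,\dots,(n-1)/2\}$ (you derive this both from Lemma~\ref{lem:xi_s} and note the alternative via Theorem~\ref{thm:disjoint_cycle_structure}(\ref{item:bounds_LU}), which is what the paper uses), observe that for even $n$ only $(n-2)/2$ is available, and for odd $n$ exclude $\abs{L}=(n-3)/2$ by checking the $\delta^2$ term of $\xi_s-\beta^{n-1}$ via Lemma~\ref{lem:xi_s}. Your second-order bookkeeping is in fact tidier than the paper's displayed expansion, which records $-\tfrac12\binom{n-1}{3}$ as the $\delta^2$ coefficient of $\xi_s$ itself when it is actually the coefficient of $\xi_s-\beta^{n-1}$ (the $\delta^2$ coefficient of $\xi_s$ is $\tfrac{(n-1)(n-2)(9-n)}{12}$, as you compute); the paper's conclusion is nonetheless correct.
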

\begin{proof}
  We start with $\abs{L}+\abs{U}=n-2$. If $n$ is even, then
  Theorem~\ref{thm:disjoint_cycle_structure}(\ref{item:bounds_LU}) implies $\abs{L}\leq(n-2)/2$ and
  $\abs{U}\leq(n-2)/2$, and the claim follows. For odd $n$,
  Theorem~\ref{thm:disjoint_cycle_structure}(\ref{item:bounds_LU}) implies only
  $\abs{L},\abs{U}\leq (n-1)/2$, and therefore $\{\abs{L},\abs{U}\}=\{(n-1)/2,(n-3)/2\}$. For the
  sake of contradiction, assume $\abs{U}=(n-1)/2$ and $\abs{L}=(n-3)/2$. By Lemma~\ref{lem:xi_s},
 \begin{multline*}   \xi_s=1-\left[\binom{n}{2}-\frac{n-1}{2}-\frac{(n-1)(n-3)}{2}\right]\delta+\left[\binom{n}{3}-\binom{n-1}{2}\frac{n-3}{2}\right]\delta^2+O(\delta^3)\\
 =1-(n-1)\delta-\frac12\binom{n-1}{3}\delta^2+O\left(\delta^3\right)<\beta^{n-1},
\end{multline*}
which is the required contradiction.
\end{proof}
\begin{lemma}\label{lem:nodes_of_long_path}
	If $(s,\pi,L,U)$ encodes a feasible basis then $\pi(s)\in U$. 
\end{lemma}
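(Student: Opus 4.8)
The plan is to eliminate the two forbidden alternatives $\pi(s)=1$ and $\pi(s)\in L$, leaving $\pi(s)\in U$ as the only option; note $\pi(s)\neq s$ since $\pi$ is fixed-point free, and $[n]=\{1,s\}\cup L\cup U$, so a priori $\pi(s)\in\{1\}\cup L\cup U$. The common input is the split of the total inflow at $\pi(s)$ into a thick and a thin part: since $\pi(s)\neq s$, the node $\pi(s)$ receives exactly one thick arc, carrying $\xi_s=x_{s\,\pi(s)}$, and exactly one thin arc, carrying $\eta_s=x_{s+1\,\pi(s)}$, so $\Phi(\pi(s))=\xi_s+\eta_s$. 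Both are basic variables, so feasibility gives $\eta_s\geq0$, while $y_s\geq0$ together with~\eqref{eq:simplified_constraints_3} gives $\xi_s\geq\beta^{n-1}$ (cf.\ Lemma~\ref{lem:characterisation_eta}); hence $\beta^{n-1}\leq\xi_s\leq\Phi(\pi(s))$.

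Suppose first $\pi(s)=1$. Constraints~\eqref{eq_wh:flow_extraction} and~\eqref{eq_wh:flow_injection} at node~$1$ give $\Phi(1)=\beta^{n-1}$, so $\xi_s+\eta_s=\beta^{n-1}$; with $\xi_s\geq\beta^{n-1}$ and $\eta_s\geq0$ this forces $\xi_s=\beta^{n-1}$ identically in $\beta$. Substituting $\abs L=\lceil(n-2)/2\rceil$ and $\abs U=\lfloor(n-2)/2\rfloor$ (Lemma~\ref{lem:cardinality_LU}) into Lemma~\ref{lem:xi_s}, the $\delta$-coefficient of $\xi_s$ equals $-\bigl(\binom n2-\abs U-(n-1)\abs L\bigr)$, which is $-n/2$ for even $n$ and $-1$ for odd $n$; but $\beta^{n-1}=1-(n-1)\delta+O(\delta^2)$ has $\delta$-coefficient $-(n-1)$, and $n/2\neq n-1$, $1\neq n-1$ for $n\geq3$, a contradiction.

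Suppose next $\pi(s)\in L$. By the Remark, $\Phi(\pi(s))=\beta^{n-2}$, so $\eta_s=\beta^{n-2}-\xi_s\geq0$ gives $\xi_s\leq\beta^{n-2}$, hence $\beta^{n-1}\leq\xi_s\leq\beta^{n-2}$. Now $\beta^{n-2}-\xi_s=\bigl(\binom n2-\abs U-(n-1)\abs L-(n-2)\bigr)\delta+O(\delta^2)$ by Lemmas~\ref{lem:xi_s} and~\ref{lem:cardinality_LU}, and for this to be non-negative for $\delta$ in a right neighborhood of $0$ we would need $\binom n2-\abs U-(n-1)\abs L\geq n-2$, i.e.\ $n/2\geq n-2$ (even $n$) or $1\geq n-2$ (odd $n$); both fail for $n\geq5$, a contradiction. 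Therefore $\pi(s)\in U$.

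I expect the structure of the argument to be routine --- a short bounded-variable computation using only Lemmas~\ref{lem:characterisation_eta},~\ref{lem:xi_s}, and~\ref{lem:cardinality_LU} --- and the delicate point to be the asymptotic bookkeeping at the threshold. The comparison of $\delta$-linear coefficients is decisive for $n\geq5$; in the remaining small cases the linear coefficient of $\xi_s$ coincides with that of $\beta^{n-2}$, so one must either carry the expansion in Lemma~\ref{lem:xi_s} one order further (comparing with $\beta^{n-2}=1-(n-2)\delta+\binom{n-2}{2}\delta^2-\cdots$) or dispose of the finitely many quadruples $(s,\pi,L,U)$ by hand, where it may be necessary to use the full feasibility criterion (non-negativity of every $\eta_i$, not only $\eta_s$). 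Getting the even/odd split and the higher-order error terms exactly right is the part that needs care.
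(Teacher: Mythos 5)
Your proof is correct and uses essentially the same machinery as the paper: Lemmas~\ref{lem:xi_s} and~\ref{lem:cardinality_LU} give the $\delta$-expansion of $\xi_s$, and comparing that coefficient with the coefficient of $\Phi(\pi(s))$ forces $\eta_s<0$ when $\pi(s)\in\{1\}\cup L$. The only organizational difference is that you split $\pi(s)=1$ and $\pi(s)\in L$ into two cases (invoking the bound $\xi_s\geq\beta^{n-1}$ in the first, which is actually not needed), whereas the paper treats both at once via the single inequality $\eta_s\leq\beta^{n-2}-\xi_s$ and one asymptotic comparison; your closing remarks on the $n\leq 4$ edge cases correctly identify that the linear coefficients can coincide there, which is why the surrounding results are all stated for $n\geq 5$.
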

\begin{proof}
	Suppose $\pi(s)\in L\cup \{1\}$. From Lemmas~\ref{lem:xi_s} and \ref{lem:cardinality_LU}, we have
	that $\xi_s=1-(n/2)\delta+O\left(\delta^2\right)$ if $n$ is even, and
	$\xi_s=1-\delta+O\left(\delta^2\right)$ if $n$ is odd. From~\eqref{eq:simplified_constraints_1a}
	and~\eqref{eq:simplified_constraints_1b}, it follows that
	\[ \eta_s\leq\beta^{n-2}-\xi_s\leq-(n-2)\delta-1+(n/2)\delta+O\left(\delta^2\right) =(2-n/2)\delta+O\left(\delta^2\right)<0,\]  
	which is the required contradiction.  
\end{proof}
In the next lemma, we use Lemma~\ref{lem:xi_s} to determine the asymptotics for the variables
$\xi_s$ and $\eta_s$ assuming the necessary conditions from Lemma~\ref{lem:cardinality_LU} and~\ref{lem:nodes_of_long_path}.
\begin{lemma}\label{lem:value_eta_s}
  Suppose that $\abs{L}=\lceil(n-2)/2\rceil$ and $\abs{U}=\lfloor(n-2)/2\rfloor$ and $\pi(s)\in U$.
  \begin{enumerate}[(i)]
  \item If $n$ is even then
    $\displaystyle\eta_s=1-\xi_s=(n/2)\delta + \frac{(n-1)(n-2)(n-6)}{12}\delta^2 +
    O\left(\delta^3\right)$.
  \item If $n$ is odd then
    $\displaystyle\eta_s=1-\xi_s=\delta + \frac{(n-1)(n-2)(n-3)}{12}\delta^2 +O\left(\delta^3\right)$.
  \end{enumerate}
\end{lemma}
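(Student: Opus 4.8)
The plan is to deduce the lemma directly from the closed form for $\xi_s$ in Lemma~\ref{lem:xi_s}, using the hypothesis $\pi(s)\in U$ to convert the asserted expansion of $\eta_s$ into an expansion of $\xi_s$.

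First I would record the identity $\eta_s=1-\xi_s$. Since $\pi(s)\in U$, we may apply equation~\eqref{eq:simplified_constraints_1c} with $i=\pi(s)$; as $\pi^{-1}(\pi(s))=s$, this reads $\xi_s+\eta_s=1$, which gives $\eta_s=1-\xi_s$ (and $\eta_s$ is well defined since $s\neq s-1$). It therefore suffices to expand $\xi_s$ to second order in $\delta$ under the stated cardinality hypotheses.

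Next I would substitute the cardinalities into Lemma~\ref{lem:xi_s}, treating the two parities of $n$ separately. For even $n$ we have $\abs{L}=\abs{U}=(n-2)/2$, and a short computation collapses the linear coefficient $\binom{n}{2}-\abs{U}-(n-1)\abs{L}$ to $n/2$; for the quadratic term, factoring out $(n-1)(n-2)/12$ gives $\binom{n}{3}-\binom{n-1}{2}\abs{L}=\frac{(n-1)(n-2)(6-n)}{12}$. Hence $\eta_s=1-\xi_s=(n/2)\delta+\frac{(n-1)(n-2)(n-6)}{12}\delta^2+O(\delta^3)$, which is part~(i). For odd $n$ the correct assignment is $\abs{L}=(n-1)/2$ and $\abs{U}=(n-3)/2$; the same bookkeeping now yields linear coefficient $1$ and quadratic coefficient $\binom{n}{3}-\binom{n-1}{2}\abs{L}=\frac{(n-1)(n-2)(3-n)}{12}$, so $\eta_s=\delta+\frac{(n-1)(n-2)(n-3)}{12}\delta^2+O(\delta^3)$, which is part~(ii).

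The argument is essentially arithmetic bookkeeping, so I do not expect a genuine obstacle. The only points requiring care are matching the floor/ceiling assignment of $\abs{L}$ and $\abs{U}$ to the parity of $n$, and keeping the $O(\delta^3)$ remainder consistent when the expansions of $\beta^{n-1}$ and $\binom{n-1}{2}$ are carried through Lemma~\ref{lem:xi_s}.
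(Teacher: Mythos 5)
Your proposal is correct and follows essentially the same route as the paper: you derive $\eta_s=1-\xi_s$ from~\eqref{eq:simplified_constraints_1c} with $i=\pi(s)\in U$, and then substitute the stated cardinalities of $L$ and $U$ into the expansion of $\xi_s$ from Lemma~\ref{lem:xi_s}, treating the two parities of $n$ separately. The arithmetic you sketch for the linear and quadratic coefficients matches the paper's calculation in both cases.
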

\begin{proof}
  We apply Lemma~\ref{lem:xi_s}. For even $n$,
  \begin{multline*}
    \xi_s=1-\left[\binom{n}{2}-\frac{n-2}{2}-\frac{(n-1)(n-2)}{2}\right]\delta+\left[\binom{n}{3}-\binom{n-1}{2}\frac{n-2}{2}\right]\delta^2+O\left(\delta^3\right)\\
    = 1-(n/2)\delta-\frac{(n-1)(n-2)(n-6)}{12}\delta^2+O\left(\delta^3\right),
  \end{multline*}
  and for odd $n$,
  \begin{multline*}
    \xi_s=1-\left[\binom{n}{2}-\frac{n-3}{2}-\frac{(n-1)^2}{2}\right]\delta+\left[\binom{n}{3}-\binom{n-1}{2}\frac{n-1}{2}\right]\delta^2+O\left(\delta^3\right)\\
    = 1-\delta-\frac{(n-1)(n-2)(n-3)}{12}\delta^2+O\left(\delta^3\right).
  \end{multline*}
  In both cases, the claim follows since $\eta_s=1-\xi_s$ by~\eqref{eq:simplified_constraints_1c}
  for $i=\pi(s)\in U$.
\end{proof}

We set $R=\{1\}$, and partition the set $[n]\setminus\{s,s-1\}$ into nine sets $W_{PQ}$ with
$P,Q\in\{L,U,R\}$, where $W_{PQ}=\{i\in[n]\setminus\{s\}\,:\,i\in P,\,\pi(i)\in Q\}$. In particular,
$W_{RR}=\emptyset$. For every $i\in[n]\setminus\{s,s-1\}$, we have $\eta_i+\xi_i = \Phi(\pi(i))$ and
$\eta_{i-1}+\xi_i=\Psi(i)$. Eliminating the $\xi$-variables, we obtain the recursion
\begin{align}
\eta_i= \eta_{i-1} + \gamma_i&&i\in [n]\setminus \{s-1,s\},\label{eq:recursive_formula}
\end{align}
where,
   \begin{align}
   \gamma_i=\Phi(\pi(i))-\Psi(i)=
   \begin{cases}
   1-\beta & \text{if $i\in W_{UU}$},\\
   \beta^{n-2}-\beta & \text{if $i\in W_{UL}$},\\
   1-\beta^{n-1} & \text{if $i\in W_{LU}$},\\
   \beta^{n-2}-\beta^{n-1} & \text{if $i\in W_{LL}$},\\
   1 & \text{if $i\in W_{RU}$},\\
   \beta^{n-2} & \text{if $i\in W_{RL}$},\\
   \beta^{n-1}-\beta & \text{if $i\in W_{UR}$},\\
   0&\text{if $i\in W_{LR}$}.
   \end{cases}\label{gamma_values}
   \end{align}
   We substitute $1-\delta$ for $\beta$ and take the limit for $\delta\to 0$ to obtain
   $\gamma_i=\alpha_{i1}\delta+\alpha_{i2}\delta^2+O\left(\delta^3\right)$, where
  \begin{align}
    \alpha_{i1} &=
                  \begin{cases}
                    1 & \text{if }i\in W_{UU}\cup W_{LL},\\
                    3-n& \text{if }i\in W_{UL},\\
                    n-1 & \text{if }i\in W_{LU},\\
                    2-n& \text{if }i\in W_{UR}\cup W_{RL},\\
                    0& \text{if }i\in W_{LR}\cup W_{RU}.
                  \end{cases}& \alpha_{i2} &=
                                             \begin{cases}
                                               \frac 1 2 (n-2)(n-3)& \text{if }i\in W_{UL},\\
                                               -\frac 1 2 (n-1)(n-2) & \text{if }i\in W_{LU},\\
                                               \frac 1 2 (n-1)(n-2)& \text{if }i\in W_{UR},\\
                                               \frac 1 2 (n-2)(n-3)& \text{if }i\in W_{RL},\\
                                               2-n& \text{if }i\in W_{LL},\\
                                               0 & \text{if }i\in W_{UU}\cup W_{LR}\cup W_{RL}.
                                             \end{cases}\label{alpha_values}
  \end{align}
  The base of the recursion is given by the values for $\eta_s$ in
  Lemma~\ref{lem:value_eta_s}, and in order to capture this, we set $\alpha_{s1}= n/2$,
  $\alpha_{s2}= \frac 1 {12}(n-1)(n-2)(n-6)$ if $n$ is even and $\alpha_{s1}=1$,
  $\alpha_{s2}= \frac 1 {12}(n-1)(n-2)(n-3)$ if $n$ is odd. For $i\in[n]\setminus\{s,s-1\}$, we define
  the set $I(i)$ by
\[I(i)=\begin{cases}
\{s+1,\dots,i\} & \text{if } s+1\leq i\leq n\\
\{s+1,\dots,n,1,2,\dots,i\} & \text{if } 1\leq i\leq s-2,
\end{cases}\]
and put $N_{PQ}(i)=\abs{I(i)\cap W_{PQ}}$ for all $(P,Q)\in\{L,U,R\}^2$. The following lemma is
obtained from Lemma~\ref{lem:value_eta_s} and repeated application of~\eqref{eq:recursive_formula}.
\begin{lemma}\label{lem:asymptotic_eta}
  If $\abs{L}=\lceil(n-2)/2\rceil$, $\abs{U}=\lfloor(n-2)/2\rfloor$ and $\pi(s)\in U$, then for all $i\in[n]\setminus\{s,s-1\}$,
  \[\eta_i= \left(\alpha_{s1}+ \sum_{j\in I(i)}\alpha_{j1}\right)\delta + \left(\alpha_{s2}+\sum_{j\in I(i)}\alpha_{j2}
    \right)\delta^2+O\left(\delta^3\right).\]
\end{lemma}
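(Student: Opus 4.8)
The plan is to iterate the recursion~\eqref{eq:recursive_formula}, taking the value of $\eta_s$ supplied by Lemma~\ref{lem:value_eta_s} as the base case. Note that the recursion $\eta_i=\eta_{i-1}+\gamma_i$ is available exactly for $i\in[n]\setminus\{s-1,s\}$, and that $\eta_{s-1}$ is not one of our variables; so the natural way to reach every $\eta_i$ with $i\in[n]\setminus\{s,s-1\}$ is to start at $\eta_s$ and step through the indices in the cyclic order $s+1,s+2,\dots,n,1,2,\dots,s-2$. This is precisely the order recorded by the sets $I(i)$, with the degenerate cases $s=2$ (no wrap-around is needed, since the first stretch already reaches $n$) and $s=n$ (the initial segment $\{s+1,\dots,n\}$ is empty) treated the same way.

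First I would establish, by induction along this cyclic order, the telescoping identity
\[\eta_i=\eta_s+\sum_{j\in I(i)}\gamma_j\qquad\text{for all }i\in[n]\setminus\{s,s-1\}.\]
The base case is a single application of~\eqref{eq:recursive_formula} (to $i=s+1$, or to $i=1$ when $s=n$), which gives $\eta_i=\eta_s+\gamma_i$ with $I(i)=\{i\}$. For the inductive step, if $i$ is the immediate successor of $i'$ in the cyclic order, then $I(i)=I(i')\cup\{i\}$ and $\eta_i=\eta_{i'}+\gamma_i$ by~\eqref{eq:recursive_formula}, so the identity propagates. The only step requiring a moment's care is the wrap-around $\eta_1=\eta_n+\gamma_1$, which is just~\eqref{eq:recursive_formula} applied at $i=1$ under the convention that node $n+1$ is identified with node $1$; one also checks that the walk $s\to s+1\to\dots\to n\to1\to\dots\to s-2$ never visits $s-1$, so every index used is legitimately in $[n]\setminus\{s-1,s\}$.

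Second, I would substitute the expansions $\gamma_j=\alpha_{j1}\delta+\alpha_{j2}\delta^2+O(\delta^3)$ coming from~\eqref{gamma_values}--\eqref{alpha_values}, together with $\eta_s=\alpha_{s1}\delta+\alpha_{s2}\delta^2+O(\delta^3)$ from Lemma~\ref{lem:value_eta_s} and the convention fixing $\alpha_{s1},\alpha_{s2}$, into the telescoping identity. Since $\abs{I(i)}\le n-2$ and $n$ is fixed, summing at most $n-2$ error terms of order $O(\delta^3)$ still yields an $O(\delta^3)$ error, and collecting the coefficients of $\delta$ and of $\delta^2$ gives exactly the claimed expansion. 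There is no genuine obstacle here: the entire content is the telescoping argument, and the only thing to be careful about is the cyclic bookkeeping of $I(i)$ and the exclusion of the undefined variable $\eta_{s-1}$.
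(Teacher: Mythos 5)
Your proof is correct and follows the same route as the paper, which simply states that the lemma is "obtained from Lemma~\ref{lem:value_eta_s} and repeated application of~\eqref{eq:recursive_formula}." You have made the implicit telescoping argument and the cyclic bookkeeping of $I(i)$ explicit, which is a reasonable expansion of what the authors leave to the reader.
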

We want to derive necessary and sufficient conditions for $\eta_i\geq 0$. Clearly, it is necessary
that $\alpha_{s1}+\sum_{j\in I(i)}\alpha_{j1}\geq 0$, and if the inequality is strict then this is also
sufficient. The next lemma deals with the equality case.
\begin{lemma}\label{lem:second_order}
  Let $n\geq 5$ and suppose $\abs{L}=\lceil(n-2)/2\rceil$, $\abs{U}=\lfloor(n-2)/2\rfloor$ and $\pi(s)\in U$. For all $i\in[n]\setminus\{s,s-1\}$,
  \[\alpha_{s1}+\sum_{j\in I(i)}\alpha_{j1}= 0\implies\alpha_{s2}+\sum_{j\in I(i)}\alpha_{j2}>0.\]
\end{lemma}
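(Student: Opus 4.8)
The plan is to express the second-order sum $\sum_{j\in I(i)}\alpha_{j2}$ in terms of the first-order sum $\sum_{j\in I(i)}\alpha_{j1}$ via a termwise identity between the coefficients, and then to use the hypothesis to eliminate that first-order sum. Write $S_1(i)=\alpha_{s1}+\sum_{j\in I(i)}\alpha_{j1}$ and $S_2(i)=\alpha_{s2}+\sum_{j\in I(i)}\alpha_{j2}$, so the claim is $S_1(i)=0\Rightarrow S_2(i)>0$.

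First I would verify, by checking the eight classes $W_{PQ}$ in~\eqref{alpha_values} one at a time, the identity
\[\alpha_{j2}=-\frac{n-2}{2}\,\alpha_{j1}+\frac{n-2}{2}\,\varepsilon_j,\qquad\text{where }\varepsilon_j:=\abs{\{j\}\cap U}-\abs{\{\pi(j)\}\cap L}\in\{-1,0,1\},\]
valid for every $j\in[n]\setminus\{s,s-1\}$. For example, for $j\in W_{UL}$ it reads $\tfrac12(n-2)(n-3)=-\tfrac{n-2}{2}(3-n)+0$, and for $j\in W_{LL}$ it reads $2-n=-\tfrac{n-2}{2}-\tfrac{n-2}{2}$; the cases with $j=1$ are unaffected by the constant discrepancy in~\eqref{gamma_values}, since that constant does not enter $\alpha_{j1},\alpha_{j2}$. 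Summing over $j\in I(i)$ and using $\sum_{j\in I(i)}\varepsilon_j=\abs{I(i)\cap U}-\abs{I(i)\cap\pi^{-1}(L)}$ (because $\pi$ is a bijection) yields
\[\sum_{j\in I(i)}\alpha_{j2}=-\frac{n-2}{2}\sum_{j\in I(i)}\alpha_{j1}+\frac{n-2}{2}\Bigl(\abs{I(i)\cap U}-\abs{I(i)\cap\pi^{-1}(L)}\Bigr).\]

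Next I would substitute $\sum_{j\in I(i)}\alpha_{j1}=-\alpha_{s1}$ (the hypothesis $S_1(i)=0$) to obtain
\[S_2(i)=\alpha_{s2}+\frac{n-2}{2}\,\alpha_{s1}+\frac{n-2}{2}\Bigl(\abs{I(i)\cap U}-\abs{I(i)\cap\pi^{-1}(L)}\Bigr).\]
Since $\pi$ is a bijection, the elements of $I(i)$ lying in $\pi^{-1}(L)$ have distinct images in $L$, so $\abs{I(i)\cap\pi^{-1}(L)}\le\abs{L}=\lceil(n-2)/2\rceil$, while trivially $\abs{I(i)\cap U}\ge0$; hence
\[S_2(i)\ \ge\ \alpha_{s2}+\frac{n-2}{2}\,\alpha_{s1}-\frac{n-2}{2}\Bigl\lceil\frac{n-2}{2}\Bigr\rceil.\]
Finally I would insert the values of $\alpha_{s1}$ and $\alpha_{s2}$ from Lemma~\ref{lem:value_eta_s}: for $n$ even the right-hand side equals $\tfrac{n-2}{12}\bigl[(n-1)(n-6)+6\bigr]$, and for $n$ odd it equals the same expression, so in both parities $S_2(i)\ge\tfrac1{12}(n-2)(n-3)(n-4)$, using $(n-1)(n-6)+6=(n-3)(n-4)$. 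This is strictly positive precisely because $n\ge5$, which proves the lemma.

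Essentially everything here is bookkeeping; the one genuine insight is the termwise identity in the second step, which is what makes the hypothesis $S_1(i)=0$ exploitable, and after that the estimate is a one-line computation in each parity. The points to be careful about are getting the signs right in that identity and in the two parity computations, and noting that the bound $\abs{I(i)\cap\pi^{-1}(L)}\le\abs{L}$ needs nothing more than injectivity of $\pi$ (in particular, no information about how $\pi^{-1}(L)$ meets $\{s,s-1\}$).
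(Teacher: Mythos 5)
Your proof is correct and, at its core, it is the same argument the paper uses: both compute $\alpha_{s2}+\sum_{j\in I(i)}\alpha_{j2}$ after eliminating the leading contribution via the hypothesis, both then discard the nonnegative $U$-counts, and both bound the $L$-counts by $\abs{L}$, arriving at the identical lower bound $\tfrac{1}{12}(n-2)(n-3)(n-4)$. The one genuine (and welcome) stylistic difference is your termwise identity $\alpha_{j2}=-\tfrac{n-2}{2}\alpha_{j1}+\tfrac{n-2}{2}\varepsilon_j$ with $\varepsilon_j=\abs{\{j\}\cap U}-\abs{\{\pi(j)\}\cap L}$: the paper instead expands $\tfrac{2}{n-2}\bigl(\alpha_{s2}+\sum_{j\in I(i)}\alpha_{j2}\bigr)$ directly in the counts $N_{PQ}(i)$ and then substitutes the rearranged first-order constraint (its equation~\eqref{eq:assumption_even}); your identity packages that substitution into a single linear relation that holds coefficient by coefficient, which makes the role of $S_1(i)=0$ transparent and avoids the intermediate rearrangement. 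It also isolates the bijectivity of $\pi$ as the only structural fact needed for $\abs{I(i)\cap\pi^{-1}(L)}\le\abs{L}$, a point left implicit in the paper. You were right to flag the constant-term discrepancy in~\eqref{gamma_values} for $j=1$; as you say, it does not touch $\alpha_{j1}$ or $\alpha_{j2}$, and the $\alpha$-tables are consistent with the corrected values.
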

\begin{proof}
  By assumption
  \[\alpha_{s1}+N_{UU}(i)+N_{LL}(i)-(n-3)N_{UL}(i)+(n-1)N_{LU}(i)-(n-2)N_{UR}(i)-(n-2)N_{RL}(i)=0,\]
  and thus
  \begin{equation}\label{eq:assumption_even}
    (n-3)N_{UL}(i)-(n-1)N_{LU}(i)=\alpha_{s1}+N_{UU}(i)+N_{LL}(i)-(n-2)N_{UR}(i)-(n-2)N_{RL}(i).
  \end{equation}
  For the second order term, we obtain
  \begin{multline*}
    \frac{2}{n-2}\left(\alpha_{s2}+\sum_{j\in I(i)}\alpha_{j2}\right)=\frac{2\alpha_{s2}}{n-2}+(n-3)N_{UL}(i)-(n-1)N_{LU}(i)\\
    +(n-1)N_{UR}(i)+(n-3)N_{RL}(i)-2N_{LL}(i)\\
    \stackrel{\eqref{eq:assumption_even}}{=}
    \frac{2\alpha_{s2}}{n-2}+\alpha_{s1}+N_{UU}(i)+N_{UR}(i)-N_{RL}(i)-N_{LL}(i)\\
    \geq \frac{2\alpha_{s2}}{n-2}+\alpha_{s1}-\left(N_{RL}(i)+N_{LL}(i)\right)
    \geq \frac{2\alpha_{s2}}{n-2}+\alpha_{s1}-\abs{L}.
  \end{multline*}
  Substituting the values for $\alpha_{s1}$ and $\alpha_{s2}$ we obtain for even $n$,
  \[\frac{2}{n-2}\left(\alpha_{s2}+\sum_{j\in I(i)}\alpha_{j2}\right)\geq\frac16(n-1)(n-6)+\frac{n}{2}-\frac{n-2}{2}=\frac16(n-1)(n-6)+1=\frac{(n-4)(n-3)}{6} >0,\]
  and for odd $n$,
  \[\frac{2}{n-2}\left(\alpha_{s2}+\sum_{j\in I(i)}\alpha_{j2}\right)\geq\frac16(n-1)(n-3)+1-\frac{n-1}{2}=\frac16(n-1)(n-3)-\frac{n-3}{2}=\frac{(n-4)(n-3)}{6}>0.\qedhere\]
\end{proof}

\begin{theorem}\label{thm:char_1}
   For $n\geq 5$, the basis
  encoded by $(s,\pi,L,U)$ is feasible if and only if the following conditions are satisfied:
  \begin{enumerate}[(i)]
  \item $\lvert U\rvert=\lfloor (n-2)/2\rfloor$ and $\lvert L\rvert=\lceil (n-2)/2\rceil$,\label{item:UL_card}
  \item $\pi(s)\in U$,\label{item:pi(s)}
  \item For every $i\in[n]\setminus\{s,s-1\}$, $\displaystyle \alpha_{s1}+\sum_{j\in I(i)} \alpha_{j1}\geq0$.\label{item:nonneg}
  \end{enumerate}
\end{theorem}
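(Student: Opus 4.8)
The plan is to prove the two implications of the characterization separately. The forward implication (a feasible basis satisfies (i)--(iii)) is essentially a bookkeeping exercise chaining together lemmas already proved, while the reverse implication (any quadruple satisfying (i)--(iii) encodes a feasible basis) additionally requires a short sign analysis of the asymptotic expansions of the variables $\eta_i$ and $\xi_s$.

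\textbf{Forward direction.} Suppose $(s,\pi,L,U)$ encodes a feasible basis. Then condition~(i) is exactly Lemma~\ref{lem:cardinality_LU}, and condition~(ii) is exactly Lemma~\ref{lem:nodes_of_long_path}. For condition~(iii), Lemma~\ref{lem:characterisation_eta} gives $\eta_i\geq 0$ for every $i\in[n]\setminus\{s-1\}$; in the ultimate regime this means $\eta_i(\delta)\geq 0$ for all sufficiently small $\delta>0$. Since (i) and (ii) are now available, Lemma~\ref{lem:asymptotic_eta} applies and writes each $\eta_i$ with $i\in[n]\setminus\{s,s-1\}$ as a power series in $\delta$ with linear coefficient $\alpha_{s1}+\sum_{j\in I(i)}\alpha_{j1}$. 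A power series that is nonnegative on an interval $(0,\varepsilon)$ cannot have a negative leading coefficient, so $\alpha_{s1}+\sum_{j\in I(i)}\alpha_{j1}\geq 0$, which is (iii).

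\textbf{Reverse direction.} Assume (i), (ii) and (iii). By Lemma~\ref{lem:characterisation_eta} it suffices to show $\beta^{n-1}\leq\xi_s\leq\beta$ and $\eta_i\geq 0$ for all $i\in[n]\setminus\{s-1\}$, for $\delta$ small. Conditions (i) and (ii) are precisely the hypotheses of Lemma~\ref{lem:value_eta_s}, which yields $\xi_s=1-\eta_s=1-(n/2)\delta+O(\delta^2)$ for even $n$ and $\xi_s=1-\delta-\bigl((n-1)(n-2)(n-3)/12\bigr)\delta^2+O(\delta^3)$ for odd $n$; comparing with $\beta=1-\delta$ and $\beta^{n-1}=1-(n-1)\delta+O(\delta^2)$ gives $\beta^{n-1}\leq\xi_s\leq\beta$ for small $\delta$ once $n\geq 5$, where in the odd case the linear terms of $\xi_s$ and $\beta$ coincide and the inequality $\xi_s<\beta$ is forced by the strictly negative quadratic term of $\xi_s$. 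For $i=s$, Lemma~\ref{lem:value_eta_s} gives $\eta_s=(n/2)\delta+O(\delta^2)>0$ (even $n$) or $\eta_s=\delta+O(\delta^2)>0$ (odd $n$). For $i\in[n]\setminus\{s,s-1\}$, Lemma~\ref{lem:asymptotic_eta} applies, and by (iii) the linear coefficient of $\eta_i$ is nonnegative: if it is positive then $\eta_i>0$ for small $\delta$; if it is zero, Lemma~\ref{lem:second_order} shows the quadratic coefficient $\alpha_{s2}+\sum_{j\in I(i)}\alpha_{j2}$ is strictly positive, so again $\eta_i>0$ for small $\delta$. Thus all conditions of Lemma~\ref{lem:characterisation_eta} hold and the basis is feasible.

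\textbf{Main obstacle.} Almost all of the genuine work is already absorbed into the preparatory lemmas, so what remains is conceptual rather than computational: the decisive ingredient is Lemma~\ref{lem:second_order}, which is what keeps the characterization finite, since it certifies that the first two terms of each asymptotic expansion already determine the sign of $\eta_i$; absent such a statement, the vanishing of a linear coefficient could in principle propagate into an endless chain of conditions on higher-order terms. The only other place demanding a little care is the endpoint inequality $\xi_s\leq\beta$ for odd $n$, where the leading terms agree and the comparison is settled only at second order.
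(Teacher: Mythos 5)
Your proof is correct and follows essentially the same route as the paper: the forward direction chains Lemmas~\ref{lem:cardinality_LU}, \ref{lem:nodes_of_long_path}, and \ref{lem:asymptotic_eta}, while the converse reduces the check to Lemma~\ref{lem:characterisation_eta} and then invokes Lemmas~\ref{lem:value_eta_s}, \ref{lem:asymptotic_eta}, and \ref{lem:second_order}. The only difference is presentational: you spell out the case $i=s$ via Lemma~\ref{lem:value_eta_s} and flag the second-order comparison for $\xi_s\leq\beta$ when $n$ is odd, both of which the paper leaves implicit.
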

\begin{proof}
  Suppose that the basis encoded by $(s,\pi,L,U)$ is feasible. Conditions~(\ref{item:UL_card})
  and~(\ref{item:pi(s)}) follow from Lemmas~\ref{lem:cardinality_LU}
  and~\ref{lem:nodes_of_long_path}, respectively, and Condition~(\ref{item:nonneg}) from
  Lemmas~\ref{lem:cardinality_LU} and~\ref{lem:asymptotic_eta} together with $\eta_i\geq 0$. Conversely, suppose that the three conditions
  in the theorem are satisfied. By Lemma~\ref{lem:characterisation_eta}, it is sufficient to verify
  $\beta^{n-1}\leq \xi_s \leq \beta$ and $\eta_i \geq 0$ for all $i \in [n]\setminus\{s-1\}$. The
  first of these conditions follows immediately from Lemma~\ref{lem:value_eta_s}, and the
  second one from~(\ref{item:nonneg}) together with Lemmas~\ref{lem:asymptotic_eta}
  and~\ref{lem:second_order}.
\end{proof}

Next, we simplify Theorem~\ref{thm:char_1}\eqref{item:nonneg} and provide another characterization
of feasible bases in Theorems~\ref{thm:oddn_char} and~\ref{thm:evenn_char}. To do this, by using
Lemmas~\ref{lem:cardinality_LU} and~\ref{lem:nodes_of_long_path} we deduce a restriction for the
possible values of $s-1$ (Lemma~\ref{lem:s-1_value}). Furthermore, in
Lemmas~\ref{lem:cardinality_XZ} and~\ref{lem:Z_dom_X}, we deduce conditions on the number of
different types of arcs in various parts of the good augmented tree, where the type of an arc is
determined by where it starts (in $L$, $U$, or $R$) and where it ends (in $L$, $U$ or $R$).
\begin{lemma}\label{lem:s-1_value}
  If $(s,\pi,L,U)$ encodes a feasible basis, then $s-1\in R$ if $n$ is odd, and $s-1\in R\cup U$ if $n$
  is even.  
\end{lemma}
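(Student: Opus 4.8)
The plan is to derive a contradiction from the assumption $s-1\in L$ (in either parity) and, additionally, from $s-1\in U$ when $n$ is odd, by combining the single linking equation~\eqref{eq:simplified_constraints_0} with the flow balance at node $s-1$. Since $(s,\pi,L,U)$ encodes a feasible basis, Lemmas~\ref{lem:cardinality_LU} and~\ref{lem:nodes_of_long_path} give $\abs{U}=\lfloor(n-2)/2\rfloor$, $\abs{L}=\lceil(n-2)/2\rceil$ and $\pi(s)\in U$, so Lemma~\ref{lem:value_eta_s} is applicable. First I would record the resulting asymptotics: $\xi_s=1-(n/2)\delta+O(\delta^2)$ for even $n$ and $\xi_s=1-\delta+O(\delta^2)$ for odd $n$. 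Dividing by $\beta$ as in~\eqref{eq:simplified_constraints_0}, $\xi_{s-1}=\xi_s/\beta=\xi_s(1+\delta+O(\delta^2))$, hence $\xi_{s-1}=1+(1-n/2)\delta+O(\delta^2)$ for even $n$ and $\xi_{s-1}=1+O(\delta^2)$ for odd $n$.

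Next I would treat the case $s-1\in L$. Here $s-1\neq 1$, so $s\neq 2$, hence $s-2\in[n]\setminus\{s-1\}$ and feasibility of the basis together with Lemma~\ref{lem:characterisation_eta} guarantees $\eta_{s-2}\geq 0$. The outflow equation at node $s-1$ is~\eqref{eq:simplified_constraints_2b} with $i=s-1$, i.e.\ $\xi_{s-1}+\eta_{s-2}=\beta^{n-1}$. Substituting $\beta^{n-1}=1-(n-1)\delta+O(\delta^2)$ and the expansion of $\xi_{s-1}$ above, I expect to obtain $\eta_{s-2}=-(n/2)\delta+O(\delta^2)$ for even $n$ and $\eta_{s-2}=-(n-1)\delta+O(\delta^2)$ for odd $n$; in either case $\eta_{s-2}<0$ for $\delta$ sufficiently small, contradicting $\eta_{s-2}\geq 0$. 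This shows $s-1\notin L$, so $s-1\in R\cup U$, which already settles the even case.

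Finally, for odd $n$, I would rule out $s-1\in U$ in the same way: again $s\neq 2$ and $\eta_{s-2}\geq 0$, but now the outflow equation at $s-1$ is~\eqref{eq:simplified_constraints_2c} with $i=s-1$, namely $\xi_{s-1}+\eta_{s-2}=\beta$. Since $\xi_{s-1}=1+O(\delta^2)$ and $\beta=1-\delta$, this forces $\eta_{s-2}=-\delta+O(\delta^2)<0$, again a contradiction. Hence $s-1\in R$ when $n$ is odd.

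I do not anticipate a serious obstacle: the computations are routine first-order expansions in $\delta=1-\beta$, and in fact only the linear coefficients from Lemma~\ref{lem:value_eta_s} are needed. The one point that requires care is the cyclic bookkeeping of indices — one must check that $s-2$ really is an index for which the feasibility constraint $\eta_{s-2}\geq 0$ is available, and that the relevant flow-balance relation at $s-1$ is exactly~\eqref{eq:simplified_constraints_2b} or~\eqref{eq:simplified_constraints_2c} — both of which follow from the observation that $s\neq 2$ in the cases under consideration (the case $s=2$, i.e.\ $s-1\in R$, is precisely the one allowed in all parities and needs no argument).
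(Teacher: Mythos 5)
Your proof is essentially the paper's proof. Both start from Lemmas~\ref{lem:cardinality_LU} and~\ref{lem:nodes_of_long_path} to pin down the first-order expansion of $\xi_s$, divide by $\beta$ via~\eqref{eq:simplified_constraints_0} to get $\xi_{s-1}$, and then plug into~\eqref{eq:simplified_constraints_2b} (for the case $s-1\in L$) and, when $n$ is odd, into~\eqref{eq:simplified_constraints_2c} (for the case $s-1\in U$) to show $\eta_{s-2}<0$, contradicting feasibility. The only cosmetic difference is that the paper carries the constant $\alpha_{s1}$ symbolically while you substitute its explicit value by parity. One small remark: the index bookkeeping you flag is actually automatic --- $\eta_{s-2}$ is always defined because $s-2\not\equiv s-1\pmod n$, and the feasibility bound $\eta_{s-2}\geq 0$ from Lemma~\ref{lem:characterisation_eta} applies to every $i\in[n]\setminus\{s-1\}$, so no separate $s\neq 2$ check is needed (though it is of course true under your hypotheses).
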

\begin{proof}
  By Lemmas~\ref{lem:cardinality_LU} and~\ref{lem:nodes_of_long_path}, $\xi_s=1-\alpha_{s1}\delta+O\left(\delta^2\right)$, and
  then~\eqref{eq:simplified_constraints_0} implies
  $\xi_{s-1}=1-\left(\alpha_{s1}-1\right)\delta+O\left(\delta^2\right)$.
  If $s-1\in L$ then~\eqref{eq:simplified_constraints_2b} implies
  \[\eta_{s-2}=\beta^{n-1}-\xi_{s-1}=1-(n-1)\delta-1+\left(\alpha_{s1}-1\right)\delta+O\left(\delta^2\right)=\left(\alpha_{s1}-n\right)\delta+O\left(\delta^2\right)<0.\]
  If $n$ is odd then $\alpha_{s1}=1$, and for $s-1\in U$,~\eqref{eq:simplified_constraints_2c} implies
  \[\eta_{s-2}=\beta-\xi_{s-1}=1-\delta-1+O\left(\delta^2\right)=-\delta+O\left(\delta^2\right)<0.\qedhere\]  
\end{proof}
\begin{lemma}\label{lem:cardinality_XZ}
  If $\pi(s)\in U$ and $s-1\in R \cup U$ then $\abs{W_{UL}}+\abs{W_{UR}}+\abs{W_{RL}}= \abs{W_{LU}}+1$.
\end{lemma}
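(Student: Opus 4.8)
The plan is to establish the identity by a double-counting argument, exploiting that $\pi$ is a bijection of $[n]$ and that the nine sets $W_{PQ}$ partition $[n]\setminus\{s,s-1\}$. The only subtlety is keeping track of the two excluded indices $s$ and $s-1$, and this is exactly where the two hypotheses $\pi(s)\in U$ and $s-1\in R\cup U$ come into play.

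First I would compute $\abs{L}$ in two ways. Grouping the elements of $L$ by the type ($R$, $L$ or $U$) of their $\pi$-image: since $s\notin L$ and, by the hypothesis $s-1\in R\cup U$, also $s-1\notin L$, we have $L\subseteq[n]\setminus\{s,s-1\}$; moreover $i\in L$ forces $i\neq s-1$, hence $\pi(i)\neq s$, so $\pi(i)\in R\cup L\cup U$. Thus every $i\in L$ is counted in exactly one of $W_{LR},W_{LL},W_{LU}$, giving
\[\abs{L}=\abs{W_{LR}}+\abs{W_{LL}}+\abs{W_{LU}}.\]
On the other hand, $\pi^{-1}(L)=\{i\in[n]:\pi(i)\in L\}$ also has size $\abs{L}$. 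The index $s$ is not in $\pi^{-1}(L)$ because $\pi(s)\in U$ by hypothesis, and $s-1$ is not in $\pi^{-1}(L)$ because $\pi(s-1)=s\notin L$; for every remaining $i\in\pi^{-1}(L)$ we have $i\in R\cup L\cup U$, so such $i$ is counted in exactly one of $W_{RL},W_{LL},W_{UL}$, giving
\[\abs{L}=\abs{W_{RL}}+\abs{W_{LL}}+\abs{W_{UL}}.\]
Equating the two expressions and cancelling $\abs{W_{LL}}$ yields $\abs{W_{LR}}+\abs{W_{LU}}=\abs{W_{RL}}+\abs{W_{UL}}$.

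Next I would treat $R=\{1\}$. The unique index $i$ with $\pi(i)=1$ is $\pi^{-1}(1)$, which is not $s$ (since $\pi(s)\in U$), not $s-1$ (since $\pi(s-1)=s\neq 1$), and not $1$ itself (since $\pi$ is fixed-point free); hence $\pi^{-1}(1)\in L\cup U$, so $\abs{W_{LR}}+\abs{W_{UR}}=1$. Substituting $\abs{W_{LR}}=1-\abs{W_{UR}}$ into the relation of the previous paragraph gives $\abs{W_{UL}}+\abs{W_{UR}}+\abs{W_{RL}}=\abs{W_{LU}}+1$, which is the claim.

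I do not expect a genuine obstacle here, as the argument is elementary counting. The one point requiring care is the bookkeeping for $s$ and $s-1$: one must check that, under the two stated hypotheses, neither of these indices contributes a spurious term to any of the counts of $\abs{L}$ and $\abs{R}$ used above, and that all the remaining indices fall into the $W_{PQ}$ as claimed.
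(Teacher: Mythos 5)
Your proof is correct, and it is the same kind of double-counting argument as the paper's: apply $\lvert P\rvert=\lvert\pi^{-1}(P)\rvert$ to a color class $P$ and expand both sides via the $W_{PQ}$ partition. The paper, however, counts $\lvert U\rvert$ rather than $\lvert L\rvert$. Because $s-1$ may or may not lie in $U$, the paper's expansion of $U$ carries an explicit case split (a contribution of $1$ if $s-1\in U$, $0$ if $s-1\in R$, appearing in one count and then re-appearing, with roles reversed, when it accounts for $W_{RU}\cup W_{RL}$), and these two corrections cancel at the end. Your choice of $L$ sidesteps this entirely: under the hypotheses, neither $s$ nor $s-1$ lies in $L$ or in $\pi^{-1}(L)$, so both expansions of $\lvert L\rvert$ are clean, and the hypotheses are used only to certify this exclusion and to place $\pi^{-1}(1)$ in $L\cup U$ so that $\lvert W_{LR}\rvert+\lvert W_{UR}\rvert=1$. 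The net effect is an argument that is logically equivalent but avoids the bookkeeping on $s-1$, which is a modest but genuine simplification over the paper's version.
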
 
\begin{proof}
  We start with
  $U=\left(U\cap\pi^{-1}(U)\right)\cup\left(U\cap\pi^{-1}(L)\}\right)\cup\left(U\cap\pi^{-1}(R)\}\right)\cup\left(U\cap\{s-1\}\right)$
  which implies
  \begin{equation}\label{eq:U_1}
    \abs{U}=\abs{W_{UU}}+\abs{W_{UL}}+\abs{W_{UR}}+
    \begin{cases}
      1 &\text{if }s-1\in U,\\
      0 &\text{if }s-1\in R.
    \end{cases}
  \end{equation}
  On the other hand, $U=\left(U\cap\pi(U)\right)\cup\left(U\cap\pi(L)\}\right)\cup\left(U\cap\pi(R)\}\right)\cup\left(U\cap\pi(\{s\})\right)$,
  hence
  \begin{equation}\label{eq:U_2}
    \abs{U}=\abs{W_{UU}}+\abs{W_{LU}}+\abs{W_{RU}}+1.
  \end{equation}
  From~\eqref{eq:U_1} and~\eqref{eq:U_2}, we obtain
  \begin{equation}\label{eq:double_counting}
    \abs{W_{UL}}+\abs{W_{UR}}=\abs{W_{LU}}+\abs{W_{RU}}+
    \begin{cases}
      1 &\text{if }s-1\in R,\\
      0 &\text{if }s-1\in U. 
    \end{cases}
  \end{equation}
  Now
  \[
    \abs{W_{UL}}+\abs{W_{UR}}+\abs{W_{RL}}\stackrel{\eqref{eq:double_counting}}{=}\abs{W_{LU}}+\abs{W_{RU}}+\abs{W_{RL}}+
    \begin{cases}
      1 &\text{if }s-1\in R\\
      0 &\text{if }s-1\in U 
    \end{cases}
    =\abs{W_{LU}}+1,\]  
  where the last equality comes from the observation that $W_{RU}\cup W_{RL}=\emptyset$ if $s-1=1$
  and $W_{RU}\cup W_{RL}=\{1\}$ if $s-1\in U\cup L$.
\end{proof}
\begin{remark}\label{rem:i*}
  Lemma~\ref{lem:cardinality_XZ} implies that if $s-1\in R \cup U$ and $\pi(s) \in U$, then there
  exists a unique index $i^*=i^*(\pi,L,U)\in[n]\setminus\{s,s-1\}$ such that
  $N_{UL}(i^*)+N_{UR}(i^*)+N_{RL}(i^*)= N_{LU}(i^*)+1$ and
  $N_{UL}(i)+N_{UR}(i)+N_{RL}(i)\leq N_{LU}(i)$ for all $i\in I(i^*-1)$.
	
\end{remark}
\begin{lemma}\label{lem:Z_dom_X}
  Suppose the quadruple $(s,\pi,L,U)$ encodes a feasible basis, and let $i^*= i^*(\pi,L,U)$. Then 
  \begin{enumerate}[(i)]
  \item If $n$ is odd, then $i^*=n$.
  \item If $n$ is even, then
    \begin{itemize}
    \item $N_{UL}(i)+N_{UR}(i)+N_{RL}(i)\leq N_{LU}(i)+1$ for all $i\in[n]\setminus\{s,s-1\}$, and
    \item $\abs{(I(i^*)\setminus\{1,\pi^{-1}(1)\}} \geq (n-4)/2$.
    \end{itemize}
  \end{enumerate}  
\end{lemma}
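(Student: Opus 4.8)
The plan is to derive both parts from condition~(iii) of Theorem~\ref{thm:char_1}, which is available because $(s,\pi,L,U)$ is feasible. First I would rewrite the linear term in a more usable form. Using the values in~\eqref{alpha_values} and the elementary identities $3-n=1-(n-2)$, $n-1=1+(n-2)$, $2-n=-(n-2)$, one obtains, for every $i\in[n]\setminus\{s,s-1\}$,
\[\sum_{j\in I(i)}\alpha_{j1}=P(i)-(n-2)\,D(i),\]
where $D(i)=N_{UL}(i)+N_{UR}(i)+N_{RL}(i)-N_{LU}(i)$ and $P(i)=N_{UU}(i)+N_{LL}(i)+N_{UL}(i)+N_{LU}(i)$. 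Here $P(i)$ counts those $j\in I(i)$ for which both $j$ and $\pi(j)$ lie in $L\cup U$; since $\pi(s-1)=s$ and $\pi(s)\in U$, every $j\in I(i)$ has $j\in\{1\}\cup L\cup U$ and $\pi(j)\in\{1\}\cup L\cup U$, so the elements of $I(i)$ \emph{not} counted by $P(i)$ are exactly those equal to $1$ or to $\pi^{-1}(1)$, giving $P(i)=\abs{I(i)\setminus\{1,\pi^{-1}(1)\}}\le\abs{I(i)}$. Hence condition~(iii) becomes
\[(n-2)\,D(i)\le\alpha_{s1}+P(i)\qquad\text{for all }i\in[n]\setminus\{s,s-1\}.\]
Two structural facts will be used throughout: as $i$ runs through the cyclic order $s+1,\dots,n,1,\dots,s-2$, the value $D(i)$ changes by $-1$, $0$ or $+1$ at each step; $D$ evaluated on the whole order equals $1$ by Lemma~\ref{lem:cardinality_XZ}; and $i^*$ is the first index with $D(i^*)=1$ (Remark~\ref{rem:i*}).

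For even $n$ we have $\alpha_{s1}=n/2$, so $(n-2)D(i)\le n/2+P(i)\le n/2+(n-2)=\frac32 n-2<2(n-2)$ for $n\ge5$; as $D(i)$ is an integer this gives $D(i)\le1$ for all $i$, which is exactly the first bullet. For the second bullet, evaluate the inequality at $i=i^*$: $n-2=(n-2)D(i^*)\le n/2+P(i^*)$, so $P(i^*)\ge(n-4)/2$, and since $P(i^*)=\abs{I(i^*)\setminus\{1,\pi^{-1}(1)\}}$ this is precisely the claimed bound.

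For odd $n$, Lemma~\ref{lem:s-1_value} together with $R=\{1\}$ forces $s-1=1$, hence $s=2$; the cyclic order is then $3,4,\dots,n$ with last element $n$, and $\alpha_{s1}=1$. If $D(i)=1$ then $(n-2)\le1+P(i)\le1+\abs{I(i)}$, so $\abs{I(i)}\ge n-3$, which leaves only $i\in\{n-1,n\}$; since $D(n)=1$ we conclude $i^*\in\{n-1,n\}$. Suppose $i^*=n-1$. Then $\abs{I(n-1)}=n-3$ and $P(n-1)\ge n-3$; because $1=s-1\notin I(n-1)$ we have $P(n-1)=\abs{I(n-1)\setminus\{\pi^{-1}(1)\}}$, which equals $n-3$ if $\pi^{-1}(1)\notin I(n-1)$ and $n-4$ otherwise, so the bound forces $\pi^{-1}(1)\notin\{3,\dots,n-1\}$. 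As $\pi^{-1}(1)\notin\{1,2\}$ (indeed $\pi(2)=\pi(s)\in U$ while $1\notin U$), this yields $\pi^{-1}(1)=n$, i.e.\ $\pi(n)=1$. But $1\equiv n+1\pmod n$, so $\pi(n)=n+1$, contradicting the defining property $\pi(i)=i+1\iff i=s-1=1$ (and $n\ne1$). Therefore $i^*=n$.

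The main obstacle is finding the reformulation $(n-2)D(i)\le\alpha_{s1}+P(i)$; once it is available the even case and the step $i^*\in\{n-1,n\}$ of the odd case are one-line estimates, and the essential extra input for the odd case is the observation that $i^*=n-1$ would force $\pi(n)=1$, which is impossible because $1$ is the cyclic successor of $n$ while $n\ne s-1$. The only step requiring a little care is the identity $P(i)=\abs{I(i)\setminus\{1,\pi^{-1}(1)\}}$, i.e.\ verifying that the indices of $I(i)$ not contributing to $P(i)$ are precisely the (at most two) indices whose thick arc is incident to node $1$; this is a routine case distinction using $\pi(s)\in U$ and $s-1\in R\cup U$.
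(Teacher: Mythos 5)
Your proof is correct and follows essentially the same route as the paper's: both rewrite Theorem~\ref{thm:char_1}(iii) as $(n-2)D(i)\leq\alpha_{s1}+P(i)$ with $D(i)=N_{UL}(i)+N_{UR}(i)+N_{RL}(i)-N_{LU}(i)$ and $P(i)=N_{UU}(i)+N_{LL}(i)+N_{UL}(i)+N_{LU}(i)=\abs{I(i)\setminus\{1,\pi^{-1}(1)\}}$, then bound $D(i)$ using $P(i)\leq\abs{I(i)}\leq n-2$. The only noteworthy difference is that you supply the explicit justification for $\pi^{-1}(1)\in I(n-1)$ (equivalently, that $\pi(n)\neq 1$) in the odd case, a step the paper's proof asserts without detail, and you obtain the first bullet of the even case directly from the uniform bound $D(i)\leq 1$ instead of by a contradiction argument.
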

\begin{proof}
  Theorem~\ref{thm:char_1}(\ref{item:nonneg}) together with~\eqref{alpha_values} implies
  \begin{multline*}\label{eq1:lem_Z_dom_X}
    0\leq\sum_{j\in
      I(i^*)}\alpha_{j1}=\alpha_{s1}+N_{UU}(i^*)+N_{LL}(i^*)+(3-n)N_{UL}(i^*)+(n-1)N_{LU}(i^*)
    \\+(2-n)\left(N_{UR}(i^*)+N_{RL}(i^*)\right),
  \end{multline*}
  hence
  \begin{multline*}
    0= \alpha_{s1}+ N_{UU}(i^*) + N_{LL}(i^*) + N_{UL}(i^*) +
    N_{LU}(i^*) +(n-2)\left[N_{LU}(i^*)-N_{UL}(i^*)-N_{UR}(i^*)- N_{RL}(i^*)\right]\\
    =\alpha_{s1}+ N_{UU}(i^*) + N_{LL}(i^*) + N_{UL}(i^*) +
    N_{LU}(i^*) -(n-2)\\
    = \alpha_{s1}+\abs{I(i^*)\setminus\{1,\pi^{-1}(1)\}}-(n-2),
  \end{multline*}
  and therefore,
  \begin{equation}\label{eq3:lem_Z_dom_X}
    \abs{I(i^*)\setminus\{1,\pi^{-1}(1)\}}\geq n-2-\alpha_{s1}=\begin{cases}
      n-3  &\text{if $n$ is odd},\\
      (n-4)/2 &\text{if $n$ is even}.
    \end{cases}
  \end{equation}
  If $n$ is odd then $\abs{I(i)\setminus\{1,\pi^{-1}(1)\}}\leq\abs{I(i)}\leq i-2$ for all
  $i\in [n]\setminus\{s,s-1\}=\{3,4,\dots,n\}$, and together with $\pi^{-1}(1)\in I(n-1)$, we deduce
  $i^*=n$. Now let $n$ be even, and assume that there exists $i\in I(s-2)\setminus I(i^*)$ such that
  $N_{UL}(i)+N_{UR}(i)+N_{RL}(i)\geq N_{LU}(i)+2$.
  As before we apply Theorem~\ref{thm:char_1}(\ref{item:nonneg}) together with~\eqref{alpha_values}
  and $\alpha_{s1}=n/2$:
  \begin{multline*}
    0\leq n/2+ N_{UU}(i) + N_{LL}(i) + N_{UL}(i) + N_{LU}(i)
    +(n-2)\left[N_{LU}(i)-N_{UL}(i)-N_{UR}(i)- N_{RL}(i)\right]\\
    \leq n/2+ N_{UU}(i) + N_{LL}(i) + N_{UL}(i) + N_{LU}(i)
    -2(n-2)\leq -3n/2+4+(n-3)=1-n/2<0,
  \end{multline*}
  which is the required contradiction.
\end{proof}
In the next two theorems we show that the necessary conditions from
Lemmas~\ref{lem:cardinality_LU},~\ref{lem:nodes_of_long_path},~\ref{lem:s-1_value},
and~\ref{lem:Z_dom_X} are also sufficient.

\begin{theorem}\label{thm:oddn_char}
  Let $n=2k+1$. The quadruple $(s,\pi,L,U)$ encodes a feasible basis if and only if the following
  conditions are satisfied.
  \begin{enumerate}[(i)]
  \item $\lvert U \rvert= k-1$ and $\lvert L \rvert= k$,\label{item:odd_n_cond_1}
  \item $\pi(s)\in U$,\label{item:odd_n_cond_2}
  \item $s-1\in R$,\label{item:odd_n_cond_3}
  \item $i^*(\pi,L,U)=n$.\label{item:odd_n_cond_4}  
  \end{enumerate}
\end{theorem}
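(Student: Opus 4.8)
The plan is to prove the equivalence in two parts. The necessity of (\ref{item:odd_n_cond_1})--(\ref{item:odd_n_cond_4}) is little more than an assembly of results already in hand, while the sufficiency carries the real content. I would work with $n\ge5$ so that Theorem~\ref{thm:char_1} is available; for $n=3$ both sides of the equivalence are vacuous, since then $\abs{U}=0$ is incompatible with $\pi(s)\in U$. For the necessity: assuming $(s,\pi,L,U)$ encodes a feasible basis, (\ref{item:odd_n_cond_1}) follows from Lemma~\ref{lem:cardinality_LU} (note $\lfloor(n-2)/2\rfloor=k-1$ and $\lceil(n-2)/2\rceil=k$), (\ref{item:odd_n_cond_2}) from Lemma~\ref{lem:nodes_of_long_path}, (\ref{item:odd_n_cond_3}) from Lemma~\ref{lem:s-1_value} (for odd $n$ this yields $s-1\in R$, i.e.\ $s=2$), and---since (\ref{item:odd_n_cond_2}) and (\ref{item:odd_n_cond_3}) supply exactly the hypotheses of Remark~\ref{rem:i*}, making $i^*=i^*(\pi,L,U)$ well defined---(\ref{item:odd_n_cond_4}) from Lemma~\ref{lem:Z_dom_X}(i).

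For the sufficiency, I would assume (\ref{item:odd_n_cond_1})--(\ref{item:odd_n_cond_4}) and reduce to Theorem~\ref{thm:char_1}. Conditions (\ref{item:odd_n_cond_1}) and (\ref{item:odd_n_cond_2}) are precisely conditions (\ref{item:UL_card}) and (\ref{item:pi(s)}) of that theorem, and $\alpha_{s1}=1$ for odd $n$, so it is enough to verify Theorem~\ref{thm:char_1}(\ref{item:nonneg}): $1+\sum_{j\in I(i)}\alpha_{j1}\ge0$ for all $i\in[n]\setminus\{s,s-1\}$. By (\ref{item:odd_n_cond_3}), $s=2$, so $[n]\setminus\{s,s-1\}=\{3,\dots,n\}$, $I(i)=\{3,\dots,i\}$, and $I(n)=[n]\setminus\{s,s-1\}$. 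Substituting the values~\eqref{alpha_values} and regrouping, I would rewrite, for $i\in\{3,\dots,n\}$,
\[
1+\sum_{j\in I(i)}\alpha_{j1}=1+S(i)+(n-2)\,D(i),
\]
with $S(i):=N_{UU}(i)+N_{LL}(i)+N_{UL}(i)+N_{LU}(i)\ge0$ and $D(i):=N_{LU}(i)-N_{UL}(i)-N_{UR}(i)-N_{RL}(i)$. For $i\in\{3,\dots,n-1\}=I(i^*-1)$, the defining property of $i^*$ from Remark~\ref{rem:i*} gives $D(i)\ge0$, so the expression is $\ge1$; the only remaining case is $i=n$.

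For $i=n$: the sets $W_{PQ}$ partition $I(n)=[n]\setminus\{s,s-1\}$ and $W_{RR}=\emptyset$, so $\abs{I(n)}=n-2=S(n)+N_{UR}(n)+N_{RL}(n)+N_{LR}(n)+N_{RU}(n)$. Since $R=\{1\}$ and $1\notin I(n)$ (because $s=2$), $N_{RL}(n)=N_{RU}(n)=0$; and $W_{UR}\cup W_{LR}\subseteq\{\pi^{-1}(1)\}$ gives $N_{UR}(n)+N_{LR}(n)\le1$, hence $S(n)\ge n-3$. On the other hand, Lemma~\ref{lem:cardinality_XZ} applies---its hypotheses $\pi(s)\in U$ and $s-1\in R$ are (\ref{item:odd_n_cond_2}) and (\ref{item:odd_n_cond_3})---and, as $I(n)$ equals $[n]\setminus\{s,s-1\}$, it reads $N_{UL}(n)+N_{UR}(n)+N_{RL}(n)=N_{LU}(n)+1$, i.e.\ $D(n)=-1$. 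Therefore $1+S(n)+(n-2)D(n)=S(n)-(n-3)\ge0$, which establishes Theorem~\ref{thm:char_1}(\ref{item:nonneg}), and so the basis is feasible by Theorem~\ref{thm:char_1}.

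I expect the only subtle point to be the bookkeeping in this last case ($i=n$): one must track correctly the single index $\pi^{-1}(1)$, whose $\pi$-image lies in $R$, and the fact that once $s=2$ the node $1$ itself is no longer in $I(n)$, in order to pin down $S(n)=n-3$ and $D(n)=-1$ exactly. Once (\ref{item:odd_n_cond_3}) has been used to set $s=2$, everything else is either a citation of an earlier result or the routine regrouping of the coefficients $\alpha_{j1}$ according to~\eqref{alpha_values}.
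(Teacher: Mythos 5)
Your proposal is correct and follows essentially the same route as the paper's own proof: necessity is assembled from Lemmas~\ref{lem:cardinality_LU}, \ref{lem:nodes_of_long_path}, \ref{lem:s-1_value}, and \ref{lem:Z_dom_X}, and sufficiency reduces to Theorem~\ref{thm:char_1}(\ref{item:nonneg}) with the identical regrouping $1+S(i)+(n-2)D(i)$, the case $i<n$ handled via the defining property of $i^*$ from Remark~\ref{rem:i*} and the case $i=n$ via Lemma~\ref{lem:cardinality_XZ} and a count of $I(n)$. The only cosmetic difference is that the paper computes $S(n)=n-3$ exactly, yielding equality to $0$, whereas you bound $S(n)\ge n-3$ and conclude $\ge 0$; your explicit dispatch of $n=3$ is a small clarification the paper leaves implicit.
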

\begin{proof}
  If the basis corresponding to $(s,\pi,L,U)$ is feasible then conditions~(\ref{item:odd_n_cond_1})
  to~(\ref{item:odd_n_cond_4}) are implied by
  Lemmas~\ref{lem:cardinality_LU},~\ref{lem:s-1_value},~\ref{lem:nodes_of_long_path},
  and~\ref{lem:Z_dom_X}. For the converse, suppose that the conditions are satisfied. From
  Theorem~\ref{thm:char_1} together with~(\ref{item:odd_n_cond_1})--(\ref{item:odd_n_cond_3}) it
  follows that it is sufficient to verify $\alpha_{21}+\sum_{j\in I(i)}\alpha_{j1}\geq 0$ for all
  $i\in[n]\setminus\{1,2\}$. From Lemma~\ref{lem:cardinality_XZ} with~(\ref{item:odd_n_cond_2})
  and~(\ref{item:odd_n_cond_3}), it follows that $i^*=i^*(\pi,L,U)$ is well
  defined, and from~(\ref{item:odd_n_cond_3}), $W_{RL}=\emptyset$. Using this together
  with~(\ref{item:even_n_cond_4}) and the definition of $i^*$
  \begin{align}
  N_{LU}(i)- N_{UL}(i) - N_{UR}(i) &\geq 0&&\text{for all } i\in[n]\setminus\{1,2,n\}\label{eq1:thm_oddn_char},\\
  N_{LU}(n)- N_{UL}(n) - N_{UR}(n) &= -1.\label{eq2:thm_oddn_char}
  \end{align}
  Now
  \begin{multline*}
    \alpha_{s1}+\sum_{j\in I(i)}\alpha_{j1}= 1+ N_{UU}(i) + N_{LL}(i) + (3-n)N_{UL}(i) + (n-1)N_{LU}(i)  +
    (2-n)N_{UR}(i) \\
    =  1+ N_{UU}(i) + N_{LL}(i) + N_{UL}(i) + N_{LU}(i)
    +(n-2)\left[N_{LU}(i)-N_{UL}(i)-N_{UR}(i)\right]\\
    \stackrel{(\ref{eq1:thm_oddn_char})}{\geq} 1+ N_{UU}(i) + N_{LL}(i) + N_{UL}(i) + N_{LU}(i)\geq 0,
  \end{multline*}
  for all $i\in[n]\setminus\{1,2,n\}$, and
  \begin{multline*}
  \sum_{j\in I(n)}\alpha_{j1}= 1+ N_{UU}(n) + N_{LL}(n) + N_{UL}(n) + N_{LU}(n)
  +(n-2)\left[N_{LU}(n)-N_{UL}(n)-N_{UR}(n)\right]\\
  \stackrel{(\ref{eq2:thm_oddn_char})}{=}3-n+ N_{UU}(n) +
  N_{LL}(n) + N_{LU}(n) + N_{UL}(n)=
  3-n+\abs{I(n)\setminus\{\pi^{-1}(1),2\}}=0 .
  \end{multline*}
  where the last equality is a consequence of $\abs{I(n)\setminus\{\pi^{-1}(1),s\}}=n-3$. 
\end{proof}
\begin{theorem}\label{thm:evenn_char}
  Let $n=2k$ and let $i^*=i^*(\pi,L,U)$. The quadruple $(s,\pi,L,U)$ encodes a feasible basis if and only if the following conditions are satisfied.
  \begin{enumerate}[(i)]
  \item $\lvert U \rvert= \lvert L \rvert=k-1$,\label{item:even_n_cond_1}
  \item $\pi(s)\in U$,\label{item:even_n_cond_2}
  \item $s-1 \in R\cup U$,\label{item:even_n_cond_3}
  \item $\abs{I(i^*)\setminus\{1,\pi^{-1}(1)\}} \geq (n-4)/2$,\label{item:even_n_cond_5}
  \item $N_{UL}(i)+N_{UR}(i)+N_{RL}(i)\leq N_{LU}(i)+1$ for all $i \in I(s-2)\setminus I(i^*)$.\label{item:even_n_cond_4}  
  \end{enumerate}
\end{theorem}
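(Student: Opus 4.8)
The plan is to follow the template of the proof of Theorem~\ref{thm:oddn_char}: the forward implication just collects the necessary conditions established earlier, while the converse is reduced, via Theorem~\ref{thm:char_1}, to the single sign inequality in its condition~(iii).

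For the forward implication, assume $(s,\pi,L,U)$ encodes a feasible basis. Condition~(i) is Lemma~\ref{lem:cardinality_LU} specialised to $n=2k$, where $\lfloor(n-2)/2\rfloor=\lceil(n-2)/2\rceil=k-1$; condition~(ii) is Lemma~\ref{lem:nodes_of_long_path}; condition~(iii) is the even case of Lemma~\ref{lem:s-1_value}. Since~(ii) and~(iii) hold, Remark~\ref{rem:i*} (via Lemma~\ref{lem:cardinality_XZ}) ensures that $i^*=i^*(\pi,L,U)$ is well defined, and then conditions~(iv) and~(v) follow from Lemma~\ref{lem:Z_dom_X}(ii) (indeed~(v) is weaker than the first assertion there, which holds for all $i\in[n]\setminus\{s,s-1\}$). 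So the forward direction needs nothing new.

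For the converse, assume~(i)--(v); in particular $i^*$ is well defined because~(ii) and~(iii) hold. By Theorem~\ref{thm:char_1}, conditions~(i) and~(ii) supply conditions~(i) and~(ii) of that theorem, so it suffices to verify $\alpha_{s1}+\sum_{j\in I(i)}\alpha_{j1}\geq 0$ for every $i\in[n]\setminus\{s,s-1\}$, where now $\alpha_{s1}=n/2=k$. From~\eqref{alpha_values}, regrouping coefficients and using that the elements of $I(i)$ not counted by $N_{UU}(i)+N_{LL}(i)+N_{UL}(i)+N_{LU}(i)$ are exactly those in $I(i)\cap\{1,\pi^{-1}(1)\}$, I would first rewrite this quantity as
\[
  \alpha_{s1}+\sum_{j\in I(i)}\alpha_{j1}=k+\bigl|I(i)\setminus\{1,\pi^{-1}(1)\}\bigr|+(n-2)\bigl[N_{LU}(i)-N_{UL}(i)-N_{UR}(i)-N_{RL}(i)\bigr].
\]
Then I would split into three cases according to the position of $i$ in the cyclic order relative to $i^*$. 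If $i\in I(i^*-1)$, the bracketed term is nonnegative by the minimality in Remark~\ref{rem:i*}, so the whole expression is at least $k>0$. If $i=i^*$, the bracketed term equals $-1$ by the defining equality of $i^*$, and condition~(iv) gives $|I(i^*)\setminus\{1,\pi^{-1}(1)\}|\geq(n-4)/2$, so the expression is at least $k+(n-4)/2-(n-2)=0$. If $i\in I(s-2)\setminus I(i^*)$, then $I(i^*)\subsetneq I(i)$, so $|I(i)\setminus\{1,\pi^{-1}(1)\}|\geq(n-4)/2$ by~(iv), while condition~(v) gives that the bracketed term is at least $-1$; hence the expression is again at least $0$. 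These three cases exhaust $[n]\setminus\{s,s-1\}$, so Theorem~\ref{thm:char_1} yields feasibility.

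The algebraic steps — regrouping $\sum_{j\in I(i)}\alpha_{j1}$ and substituting $\alpha_{s1}=n/2$ — are routine and parallel the computations in Lemma~\ref{lem:Z_dom_X} and Theorem~\ref{thm:oddn_char}. The delicate part, and the only place where parity genuinely enters, is the bookkeeping around $i^*$: checking that $i^*$ is well defined, that the index range $I(s-2)\setminus I(i^*)$ appearing in~(v) is exactly the set of indices lying past $i^*$ in the cyclic order, and that the nesting of the sets $I(\cdot)$ lets the lower bound of~(iv) propagate from $i^*$ to all later indices. Unlike the odd case, where $s-1=1$ forces $i^*=n$ and makes $W_{RL}$ empty, here $s$ may vary (subject to $s-1\in R\cup U$), $W_{RL}$ need not be empty, and $i^*$ can sit anywhere, so the case split at $i^*$ cannot collapse to a single terminal index.
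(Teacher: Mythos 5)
Your proof is correct and follows essentially the same route as the paper: both directions collect the same lemmas, and the converse reduces to Theorem~\ref{thm:char_1}(iii) via the identity $\alpha_{s1}+\sum_{j\in I(i)}\alpha_{j1}=n/2+\abs{I(i)\setminus\{1,\pi^{-1}(1)\}}+(n-2)\bigl[N_{LU}(i)-N_{UL}(i)-N_{UR}(i)-N_{RL}(i)\bigr]$. The only difference is cosmetic: you split off $i=i^*$ as its own case, whereas the paper folds it into the range $i\in I(s-2)\setminus I(i^*-1)$ by invoking ``the definition of $i^*$'' alongside condition~(v); the two organizations cover the same indices and use the same bounds.
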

\begin{proof}
  If the basis corresponding to $(s,\pi,L,U)$ is feasible then conditions~(\ref{item:even_n_cond_1})
  to~(\ref{item:even_n_cond_5}) are implied by Lemmas~\ref{lem:cardinality_LU},~\ref{lem:s-1_value}
  and~\ref{lem:nodes_of_long_path},~\ref{lem:Z_dom_X}, and~\ref{lem:Z_dom_X}. For the converse,
  suppose that the conditions are satisfied. From Theorem~\ref{thm:char_1} together
  with~(\ref{item:even_n_cond_1}) and~(\ref{item:even_n_cond_2}) it follows that it is sufficient to
  verify $\alpha_{s1}+\sum_{j\in I(i)}\alpha_{j1}\geq 0$ for all $i\in[n]\setminus\{s,s-1\}$. From
  Lemma~\ref{lem:cardinality_XZ} with~(\ref{item:even_n_cond_2}) and~(\ref{item:even_n_cond_3}),
  $i^*$ is well defined. Using~(\ref{item:even_n_cond_4}) and the definition of $i^*$
  \begin{align}
  N_{LU}(i)- N_{UL}(i) - N_{UR}(i) - N_{RL}(i) &\geq 0&&\text{for all } i\in I(i^*-1),\label{eq1:thm_evenn_char}\\
  N_{LU}(i)- N_{UL}(i) - N_{UR}(i) - N_{RL}(i) &\geq -1&&\text{for all } i\in I(s-2)\setminus I(i^*-1).\label{eq2:thm_evenn_char}
  \end{align}
  Now 
  \begin{multline*}
  \alpha_{s1}+\sum_{j\in I(i)}\alpha_{j1}=  n/2+ N_{UU}(i) + N_{LL}(i) + (3-n)N_{UL}(i) + (n-1)N_{LU}(i)  +
  (2-n)\left[N_{UR}(i)+ N_{RL}(i)\right]\\
   = n/2+ N_{UU}(i) + N_{LL}(i) + N_{UL}(i) + N_{LU}(i)
  +(n-2)\left[N_{LU}(i)-N_{UL}(i)-N_{UR}(i)- N_{RL}(i)\right]\\ 
  \stackrel{(\ref{eq1:thm_evenn_char}),(\ref{eq2:thm_evenn_char})}{\geq}\begin{cases}
  n/2+ N_{UU}(i) + N_{LL}(i) + N_{UL}(i) + N_{LU}(i) \geq 0&\text{for all } i\in I(i^*-1),\\
  n/2+ 2-n + N_{UU}(i) + N_{LL}(i) + N_{UL}(i) + N_{LU}(i)\stackrel{(\ref{item:even_n_cond_5})}{\geq} 0&\text{for all } i\in I(s-2)\setminus I(i^*-1). \qedhere
  \end{cases}  
  \end{multline*}
\end{proof}

We illustrate Theorems~\ref{thm:char_1},~\ref{thm:oddn_char}, and~\ref{thm:evenn_char} by checking
the feasibility of the three bases represented in Figure~\ref{fig:three_bases}.
\begin{figure}[htb]
  \begin{subfigure}[b]{.32\linewidth}
		\centering 
		\begin{tikzpicture}[scale=2,
		U/.style={rectangle,draw,fill=black,inner sep=0pt,outer sep=1pt,minimum size=2mm},
		L/.style={rectangle,draw,fill=none,inner sep=0pt,outer sep=1pt,minimum size=2mm},
		base/.style={circle,draw,fill=black,inner sep=0pt,outer sep=1pt,minimum size=2mm},
		start/.style={circle,draw,fill=none,inner sep=0pt,outer sep=1pt,minimum size=2mm}]
		\node[start,label={[label distance=-0cm]90:$1$}] (v1) at (90:1) {};
		\node[L,label={[label distance=-.1cm]30:$5$}] (v2) at (30:1) {};
		\node[U,label={[label distance=-.1cm]-30:$2$}] (v3) at (-30:1) {};
		\node[L,label={[label distance=-0cm]-90:$3$}] (v4) at (-90:1) {};
		\node[base,label={[label distance=-.1cm]-150:$4$}] (v5) at (-150:1) {};
		\node[U,label={[label distance=-.1cm]-210:$6$}] (v6) at (-210:1) {};    
		\draw[very thick,->] (v1) -- (v2);
		\draw[very thick,->] (v2) -- (v3);
		\draw[very thick,->] (v3) -- (v1);
		\draw[very thick,->] (v4) -- (v5);
		\draw[very thick,->] (v5) -- (v6);
		\draw[very thick,->] (v6) -- (v4);    
		\draw[bend right=20,->] (v1) to (v4);
		\draw[bend right=0,->] (v2) to (v6);
		\draw[bend right=30,->] (v3) to (v2);
		\draw[bend right=10,->] (v4) to (v1);
		\draw[bend right=0,->] (v6) to (v3);    
              \end{tikzpicture}
              \caption{$s=4$, $\pi=(152)(346)$,\\ $L=\{3,5\}$, $U=\{2,6\}$.}\label{fig:ex1_graph}
            \end{subfigure}\hfill
            \begin{subfigure}[b]{.32\linewidth}
	\centering 
	\begin{tikzpicture}[scale=2,
	U/.style={rectangle,draw,fill=black,inner sep=0pt,outer sep=1pt,minimum size=2mm},
	L/.style={rectangle,draw,fill=none,inner sep=0pt,outer sep=1pt,minimum size=2mm},
	base/.style={circle,draw,fill=black,inner sep=0pt,outer sep=1pt,minimum size=2mm},
	start/.style={circle,draw,fill=none,inner sep=0pt,outer sep=1pt,minimum size=2mm}]
	\node[start,label={[label distance=-.1cm]90:$1$}] (v1) at (90:1) {};
	\node[base,label={[label distance=-.1cm]38.6:$2$}] (v2) at (38.6:1) {};
	\node[U,label={[label distance=-.1cm]347.1:$7$}] (v3) at (347.1:1) {};
	\node[L,label={[label distance=-.1cm]295.7:$3$}] (v4) at (295.7:1) {};
	\node[U,label={[label distance=-.1cm]244.3:$5$}] (v5) at (244.3:1) {};
	\node[L,label={[label distance=-.1cm]192.9:$4$}] (v6) at (192.9:1) {};
	\node[L,label={[label distance=-.1cm]141.4:$6$}] (v7) at (141.4:1) {};
	\draw[very thick,->] (v1) -- (v2);
	\draw[very thick,->] (v2) -- (v3);
	\draw[very thick,->] (v3) -- (v4);
	\draw[very thick,->] (v4) -- (v5);
	\draw[very thick,->] (v5) -- (v6);
	\draw[very thick,->] (v6) -- (v7);
	\draw[very thick,->] (v7) -- (v1);
	\draw[bend right=10,->] (v1) to (v4);
	\draw[bend right=10,->] (v3) -- (v1);
	\draw[bend right=30,->] (v4) to (v3);
	\draw[bend right=10,->] (v5) to (v7);
	\draw[bend right=30,->] (v6) to (v5);
	\draw[bend right=30,->] (v7) to (v6);    
      \end{tikzpicture}
      \caption{$s=2$, $\pi=(1273546)$,\\ $L=\{3,4,6\}$, $U=\{5,7\}$.}\label{fig:ex2_graph}
      \end{subfigure}\hfill
      \begin{subfigure}[b]{.32\linewidth}
	\centering 
	\begin{tikzpicture}[scale=2,
	U/.style={rectangle,draw,fill=black,inner sep=0pt,outer sep=1pt,minimum size=2mm},
	L/.style={rectangle,draw,fill=none,inner sep=0pt,outer sep=1pt,minimum size=2mm},
	base/.style={circle,draw,fill=black,inner sep=0pt,outer sep=1pt,minimum size=2mm},
	start/.style={circle,draw,fill=none,inner sep=0pt,outer sep=1pt,minimum size=2mm}]
	\node[start,label={[label distance=-.1cm]90:$1$}] (v1) at (90:1) {};
	\node[L,label={[label distance=-.1cm]45:$6$}] (v2) at (45:1) {};
	\node[U,label={[label distance=-.1cm]0: $4$}] (v3) at (0:1) {};
	\node[base,label={[label distance=-.1cm]-45:$5$}] (v4) at (-45:1) {};
	\node[U,label={[label distance=-.1cm]-90:$3$}] (v5) at (-90:1) {};
	\node[L,label={[label distance=-.1cm]-135:$2$}] (v6) at (-135:1) {};
	\node[U,label={[label distance=-.1cm]180:$8$}] (v7) at (180:1) {};
	\node[L,label={[label distance=-.1cm]135:$7$}] (v8) at (135:1) {};
	\draw[very thick,->] (v1) -- (v2);
	\draw[very thick,->] (v2) -- (v3);
	\draw[very thick,->] (v3) -- (v4);
	\draw[very thick,->] (v4) -- (v5);
	\draw[very thick,->] (v5) -- (v1);
	\draw[very thick,->] (v6) -- (v7);
	\draw[very thick,->] (v7) -- (v8);
	\draw[very thick,->] (v8) -- (v6);
	\draw[bend right=0,->] (v1) to (v8);
	\draw[bend right=0,->] (v2) to (v5);
	\draw[bend right=-10,->] (v3) to (v1);
	\draw[bend right=20,->] (v5) to (v7);
	\draw[bend right=0,->] (v6) to (v2);
	\draw[bend right=30,->] (v7) to (v6);
	\draw[bend right=20,->] (v8) to (v3);      
      \end{tikzpicture}
      \caption{$s=5$, $\pi=(16453)(287)$,\\ $L=\{2,6,7\}$, $U=\{3,4,8\}$.}\label{fig:ex3_graph}
\end{subfigure}
  \caption{Three bases for $n=6$, $n=7$ and $n=8$, respectively.}
  \label{fig:three_bases}
\end{figure}
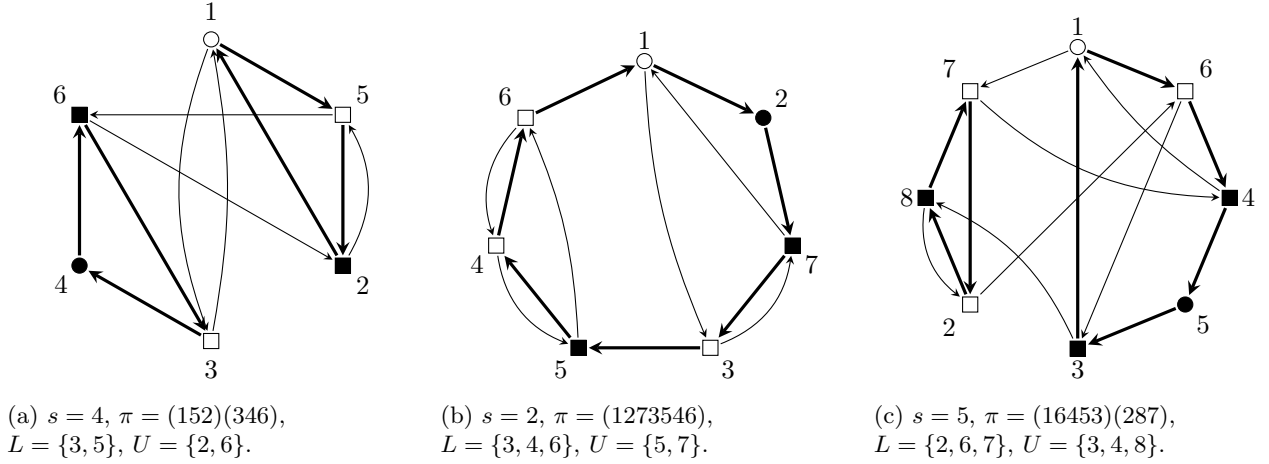
It is immediately clear that all three examples satisfy the first two conditions in
Theorem~\ref{thm:char_1}: the number of empty squares is at least the number of filled squares and
at most the number of filled squares plus one, and the arc leaving the filled circle ends in a
filled square. 
\begin{example}[Figure~\ref{fig:three_bases}(\subref{fig:ex1_graph})]
  We have $5\in W_{LU}$, $6\in W_{UL}$, $1\in W_{RL}$, $2\in W_{UR}$, and therefore
  \[\alpha_{41}+\alpha_{51}+\alpha_{61}+\alpha_{11}+\alpha_{21}=3+5-3-4-4=-3<0,\]
  which implies that the basis is infeasible. The fact that a negative partial sum occurs in the
  very end corresponds to the fact that the third condition in Theorem~\ref{thm:evenn_char} is
  violated as $3=s-1\in L$.
\end{example}
\begin{example}[Figure~\ref{fig:three_bases}(\subref{fig:ex2_graph})]
  Using $3\in W_{LU}$, $4\in W_{LL}$, $5\in W_{UL}$, $6\in W_{LR}$ and $7\in W_{UL}$, we obtain that
  all partial sums in $\alpha_{21}+\alpha_{31}+\dots+\alpha_{71}=1+6+1-4+0-4=0$ are non-negative,
  and the basis is feasible. We get the same result by verifying the conditions in
  Theorem~\ref{thm:oddn_char}:
\begin{enumerate}[(i)]
\item There are $k-1=2$ filled squares and $k=3$ empty squares.
\item The arc leaving the filled circle ends in a filled square.
\item There is a thick arc from node $1$ to node $2$.
\item With $f(i)=N_{UL}(i)+N_{UR}(i)+N_{RL}(i)-N_{LU}(i)$, we have
  $(f(3),f(4),f(5),f(6),f(7))=(-1,-1,0,0,1)$, hence $i^*=7$.
\end{enumerate}
\end{example}
\begin{example}[Figure~\ref{fig:three_bases}(\subref{fig:ex3_graph})]
  Using $6\in W_{LU}$, $7\in W_{LL}$, $8\in W_{UL}$, $1\in W_{RL}$, $2\in W_{LU}$ and $3\in W_{UR}$,
  we obtain that all partial sums in $\alpha_{51}+\alpha_{61}+\alpha_{71}=4+7+1-5-6+7-6=2$ are
  non-negative, so the basis is feasible. We get the same result by verifying the conditions in
  Theorem~\ref{thm:evenn_char}:
\begin{enumerate}[(i)]
\item There are $k-1=3$ filled squares and $k-1=3$ empty squares.
\item The thick arc leaving the filled circle ends in a filled square.
\item The thick arc entering the filled circle starts in a filled square or the empty circle.
\end{enumerate}
With $f(i)=N_{UL}(i)+N_{UR}(i)+N_{RL}(i)-N_{LU}(i)$, we have
$(f(6),f(7),f(8),f(1),f(2),f(3))=(-1,-1,0,1,0,1)$, hence $i^*=1$, and then
\begin{enumerate}[(i)]
  \setcounter{enumi}{3}
\item $\abs{I(1)\setminus\{1,\pi^{-1}(1)\}}=\abs{\{6,7,8\}}=3\geq (8-4)/2=2$, and
\item $f(i)\leq 1$ for all $i\in\{6,7,8,1,2,3\}$.
\end{enumerate}
\end{example}

For small $n$, we can use the characterizations given in Theorems~\ref{thm:char_1},~\ref{thm:oddn_char}
and~\ref{thm:evenn_char} to count feasible bases of the type considered in this section. For $n=5$,
this can be done by hand: By Theorem~\ref{thm:oddn_char}, $s=2$ and $\pi(2)\in U$, in particular
$\pi(2)\neq 1$. Together with $\pi(i)\neq i$ for all $i$, and $\pi(i)\neq i+1$ for all $i\neq 1$,
this leaves only two permutations: $\pi_1=(12543)$ and $\pi_2=(124)(35)$. Then $U$ is fixed because
$\abs{U}=1$ and $\pi(2)\in U$, hence $U=\{5\}$ for $\pi_1$ and $U=\{4\}$ for $\pi_2$. For $\pi_1$
this implies $3\in W_{LR}$, $4\in W_{LL}$ and $5\in W_{UL}$, and the basis is feasible because the
partial sums of $1+0+1-2$ are all non-negative. For $\pi_2$, $3\in W_{LL}$ and $4\in W_{UR}$, so the
basis is infeasible as $1+1-3<0$. For $n\geq 6$, we implemented an algorithm that runs through all
combinations $(s,\pi,U,L)$ and checks the feasibility conditions, using
Theorems~\ref{thm:char_1},~\ref{thm:oddn_char} and~\ref{thm:evenn_char} to limit the search
space. The results for $n\in\{6,\dots,10\}$ are presented in Tables~\ref{tab:n_6}
to~\ref{tab:n_10}. Here the columns (except the first, and for even $n$ the last) correspond to the
values of $s$, and the rows correspond to cycle types, where a tuple $(l_1,\dots,l_t)$ indicates
that $\pi$ has $t$ cycles with lengths $l_1,\dots,l_t$, and $l_1$ is the length of the cycle
containing node $1$. In particular, the first row corresponds to the quasi-Hamiltonian feasible
bases. The shaded cells indicate the number of quasi-Hamiltonian feasible bases and
the total number of feasible bases.
\begin{table}[htb]
  \begin{minipage}{.6\textwidth}
  \centering
  \caption{Numbers of feasible bases for $n=6$.} \label{tab:n_6}
  \begin{tabular}{crrrrrr} \toprule
    &\multicolumn{5}{c}{$s$} & \\ 
    &\multicolumn{1}{c}{2}&\multicolumn{1}{c}{3}&\multicolumn{1}{c}{4}&\multicolumn{1}{c}{5}&\multicolumn{1}{c}{6}& total\\ \cmidrule(lr){1-1}\cmidrule(lr){2-6}\cmidrule(lr){7-7}
$(6)$ & \num{17} & \num{5} & \num{6} & \num{5} & \num{3} & \cellcolor[gray]{0.9}{\num{36}}\\
$(4,2)$ & \num{6} & \num{5} & \num{5} & \num{6} & \num{3} & \num{25}\\
$(3,3)$ & \num{3} & \num{2} & \num{1} & \num{1} & \num{0} & \num{7}\\
$(2,4)$ & \num{0} & \num{2} & \num{1} & \num{1} & \num{0} & \num{4}\\
$(2,2,2)$ & \num{0} & \num{3} & \num{1} & \num{2} & \num{1} & \num{7}\\ \midrule
total & \num{26} & \num{17} & \num{14} & \num{15} & \num{7} & \cellcolor[gray]{0.9}{\num{79}} \\ \bottomrule
  \end{tabular}
\end{minipage}\hfill
\begin{minipage}{.38\textwidth}
  \centering
  \caption{Numbers of feasible bases for $n=7$.}\label{tab:n_7}
  \begin{tabular}{cr} \toprule
    & \multicolumn{1}{c}{$s=2$} \\ \midrule
$(7)$ & \cellcolor[gray]{0.9}{\num{35}} \\
$(5,2)$ & \num{18} \\
$(4,3)$ & \num{11} \\
$(3,4)$ & \num{7} \\
$(3,2,2)$ & \num{3} \\ \midrule
total & \cellcolor[gray]{0.9}{\num{74}} \\ \bottomrule
  \end{tabular}
\end{minipage}
\end{table}
\begin{table}[htb]
  \begin{minipage}{.6\textwidth}
  \centering
  \caption{Numbers of feasible bases for $n=8$.} \label{tab:n_8}
  \begin{tabular}{crrrrrrrr} \toprule
    &\multicolumn{7}{c}{$s$} & \\ 
    &\multicolumn{1}{c}{2}&\multicolumn{1}{c}{3}&\multicolumn{1}{c}{4}&\multicolumn{1}{c}{5}&\multicolumn{1}{c}{6}&\multicolumn{1}{c}{7}&\multicolumn{1}{c}{8}& total\\ \cmidrule(lr){1-1}\cmidrule(lr){2-8}\cmidrule(lr){9-9}
    $(8)$ & \num{1026} & \num{458} & \num{460} & \num{422} & \num{418} & \num{338} & \num{219} & \cellcolor[gray]{0.9}{\num{3341}}\\
$(6,2)$ & \num{516} & \num{333} & \num{339} & \num{323} & \num{330} & \num{263} & \num{169} & \num{2273}\\
$(5,3)$ & \num{337} & \num{145} & \num{134} & \num{121} & \num{115} & \num{106} & \num{80} & \num{1038}\\
$(4,4)$ & \num{247} & \num{120} & \num{110} & \num{113} & \num{108} & \num{76} & \num{52} & \num{826}\\
$(3,5)$ & \num{178} & \num{116} & \num{88} & \num{92} & \num{82} & \num{54} & \num{28} & \num{638}\\
$(2,6)$ & \num{0} & \num{119} & \num{75} & \num{72} & \num{71} & \num{70} & \num{28} & \num{435}\\
$(4,2,2)$ & \num{128} & \num{112} & \num{110} & \num{110} & \num{117} & \num{81} & \num{55} & \num{713}\\
$(2,4,2)$ & \num{0} & \num{115} & \num{83} & \num{83} & \num{84} & \num{74} & \num{34} & \num{473}\\
$(3,3,2)$ & \num{152} & \num{128} & \num{104} & \num{108} & \num{99} & \num{74} & \num{47} & \num{712}\\
$(2,3,3)$ & \num{0} & \num{38} & \num{19} & \num{17} & \num{16} & \num{22} & \num{11} & \num{123}\\
$(2,2,2,2)$ & \num{0} & \num{27} & \num{22} & \num{22} & \num{26} & \num{17} & \num{10} &
                                                                                          \num{124}\\ \midrule
total & \num{2584} & \num{1711} & \num{1544} & \num{1483} & \num{1466} & \num{1175} & \num{733} & \cellcolor[gray]{0.9}{\num{10696}} \\ \bottomrule
  \end{tabular}
\end{minipage}\hfill
\begin{minipage}{.38\textwidth}
  \centering
  \caption{Numbers of feasible bases for $n=9$.}\label{tab:n_9}
  \begin{tabular}{cr} \toprule
    & \multicolumn{1}{c}{$s=2$} \\ \midrule
    $(9)$ & \cellcolor[gray]{0.9}{\num{3891}} \\
$(7,2)$ & \num{1932} \\
$(6,3)$ & \num{1294} \\
$(5,4)$ & \num{954} \\
$(4,5)$ & \num{788} \\
$(3,6)$ & \num{490} \\
$(5,2,2)$ & \num{468} \\
$(4,3,2)$ & \num{651} \\
$(3,4,2)$ & \num{357} \\
$(3,3,3)$ & \num{159} \\
$(3,2,2,2)$ & \num{56} \\ \midrule
total & \cellcolor[gray]{0.9}{\num{11040}} \\ \bottomrule
  \end{tabular}
\end{minipage}
\end{table}

\begin{table}[htb]
  \centering
  \caption{Numbers of feasible bases for $n=10$.} \label{tab:n_10}
  \begin{tabular}{crrrrrrrrrr} \toprule
        &\multicolumn{9}{c}{$s$} & \\ 
    &\multicolumn{1}{c}{2}&\multicolumn{1}{c}{3}&\multicolumn{1}{c}{4}&\multicolumn{1}{c}{5}&\multicolumn{1}{c}{6}&\multicolumn{1}{c}{7}&\multicolumn{1}{c}{8}&\multicolumn{1}{c}{9}&\multicolumn{1}{c}{10}& total\\ \cmidrule(lr){1-1}\cmidrule(lr){2-10}\cmidrule(lr){11-11}
    $(10)$ & \num{163701} & \num{83664} & \num{79720} & \num{74812} & \num{72468} & \num{69116} & \num{58400} & \num{50696} & \num{36127} & \cellcolor[gray]{0.9}{\num{688704}}\\
$(8,2)$ & \num{81890} & \num{56040} & \num{53980} & \num{51041} & \num{49613} & \num{47595} & \num{40438} & \num{34783} & \num{24939} & \num{440319}\\
$(7,3)$ & \num{55099} & \num{27526} & \num{25563} & \num{23849} & \num{23073} & \num{22028} & \num{18711} & \num{16537} & \num{12276} & \num{224662}\\
$(6,4)$ & \num{40832} & \num{21170} & \num{20070} & \num{18836} & \num{18200} & \num{17624} & \num{14926} & \num{12958} & \num{9268} & \num{173884}\\
$(5,5)$ & \num{32416} & \num{16844} & \num{15986} & \num{15197} & \num{14749} & \num{13744} & \num{11212} & \num{9870} & \num{7110} & \num{137128}\\
$(4,6)$ & \num{27019} & \num{14178} & \num{13004} & \num{12550} & \num{12077} & \num{11491} & \num{9689} & \num{8390} & \num{5851} & \num{114249}\\
$(3,7)$ & \num{20834} & \num{14224} & \num{11289} & \num{10970} & \num{10585} & \num{9816} & \num{7704} & \num{6305} & \num{4088} & \num{95815}\\
$(2,8)$ & \num{0} & \num{14343} & \num{10520} & \num{10046} & \num{9722} & \num{8834} & \num{7518} & \num{7011} & \num{3856} & \num{71850}\\
$(6,2,2)$ & \num{20428} & \num{17493} & \num{16895} & \num{16071} & \num{15672} & \num{15245} & \num{13041} & \num{11121} & \num{7987} & \num{133953}\\
$(2,6,2)$ & \num{0} & \num{11948} & \num{9180} & \num{8849} & \num{8592} & \num{7871} & \num{6843} & \num{6247} & \num{3621} & \num{63151}\\
$(5,3,2)$ & \num{27320} & \num{18625} & \num{17596} & \num{16720} & \num{16241} & \num{15256} & \num{12735} & \num{11215} & \num{8349} & \num{144057}\\
$(3,5,2)$ & \num{14514} & \num{12835} & \num{10735} & \num{10524} & \num{10258} & \num{9461} & \num{7399} & \num{6148} & \num{4185} & \num{86059}\\
$(2,5,3)$ & \num{0} & \num{7619} & \num{5464} & \num{5217} & \num{5082} & \num{4527} & \num{3723} & \num{3592} & \num{2083} & \num{37307}\\
$(4,4,2)$ & \num{20235} & \num{14289} & \num{13344} & \num{12873} & \num{12409} & \num{12060} & \num{10245} & \num{8775} & \num{6223} & \num{110453}\\
$(2,4,4)$ & \num{0} & \num{3637} & \num{2662} & \num{2527} & \num{2441} & \num{2277} & \num{1979} & \num{1810} & \num{1010} & \num{18343}\\
$(4,3,3)$ & \num{9194} & \num{4609} & \num{3994} & \num{3814} & \num{3674} & \num{3497} & \num{2972} & \num{2674} & \num{2025} & \num{36453}\\
$(3,4,3)$ & \num{12247} & \num{8298} & \num{6394} & \num{6180} & \num{5943} & \num{5584} & \num{4415} & \num{3683} & \num{2529} & \num{55273}\\
$(4,2,2,2)$ & \num{3354} & \num{3504} & \num{3324} & \num{3219} & \num{3124} & \num{3118} & \num{2675} & \num{2254} & \num{1607} & \num{26179}\\
$(2,4,2,2)$ & \num{0} & \num{5365} & \num{4233} & \num{4096} & \num{3987} & \num{3751} & \num{3350} & \num{2959} & \num{1777} & \num{29518}\\
$(3,3,2,2)$ & \num{6061} & \num{6490} & \num{5518} & \num{5430} & \num{5315} & \num{4896} & \num{3879} & \num{3274} & \num{2346} & \num{43209}\\
$(2,3,3,2)$ & \num{0} & \num{3963} & \num{2958} & \num{2842} & \num{2789} & \num{2455} & \num{2045} & \num{1967} & \num{1232} & \num{20251}\\
$(2,2,2,2,2)$ & \num{0} & \num{582} & \num{472} & \num{463} & \num{448} & \num{446} & \num{406} &
                                                                                                  \num{343} & \num{213} & \num{3373}\\ \midrule
total & \num{535144} & \num{367246} & \num{332901} & \num{316126} & \num{306462} & \num{290692} & \num{244305} & \num{212612} & \num{148702} & \cellcolor[gray]{0.9}{\num{2754190}} \\ \bottomrule
  \end{tabular}
\end{table}

In these tables, we observe that the number of feasible bases decreases as the number of cycles
increases, and that for a fixed number of cycles the number of feasible bases is larger if node $1$
is on a long cycle. In view of Conjecture~\ref{con:EFKM}, we are interested in the ratio between the
number of quasi-Hamiltonian bases and the total number of feasible bases, and we would like this
ratio to be bounded below by 1 divided by a polynomial function of $n$. Defining $a_n$ and $b_n$ to
be the numbers of quasi-Hamiltonian feasible bases and the total number of feasible bases in the
considered class, respectively, we can summarize our counting results as shown in Table~\ref{tab:summary}.
\begin{table}[htb]
  \centering
  \caption{The proportion of quasi-Hamiltonian feasible bases in the class of feasible bases
    considered in this section.}
  \label{tab:summary}
  \begin{tabular}{crrr}\toprule
    $n$ & \multicolumn{1}{c}{$a_n$} & \multicolumn{1}{c}{$b_n$} & \multicolumn{1}{c}{$na_n/b_n$} \\ \midrule
    $5$ & \num{1} & \num{1} & \num{5.0000} \\
    $6$ & \num{36} & \num{79} & \num{2.7342} \\
    $7$ & \num{35} & \num{74} & \num{3.3108} \\
    $8$ & \num{3341} & \num{10696} & \num{2.4989} \\
    $9$ & \num{3891} & \num{11040} & \num{3.1720} \\
    $10$ & \num{688704} & \num{2754190} & \num{2.5006} \\
    $11$ & \num{801114} & \num{2884325} & \num{3.0552} \\
    $12$ & \num{234123800} & \num{1113400022} & \num{2.5233} \\
    $13$ & \num{269326587} & \num{1172169769} & \num{2.9870} \\
    \bottomrule
  \end{tabular}  
\end{table}


\section{Summary and conclusion}\label{sec:conclusion}
In this paper we continue to study the polytope $\whBeta(G)$ which was introduced
by~\citet{eshragh2011hybrid} in connection with a random walk based approach to the Hamilton Cycle
Problem, and investigated further in~\cite{eshragh2011hamiltonian,Eshragh2018}. Two ingredients are
needed in order to make this approach work: (1) if the input graph is Hamiltonian then there need to
be sufficiently many feasible bases corresponding to Hamiltonian cycles, and (2) the graph of the
polytope needs to have good mixing properties. In order to make progress towards establishing these
two properties, a good understanding of the combinatorial structure of $\whBeta(G)$ is needed, and
in this paper we present significant results in this direction:
\begin{enumerate}
\item The set of feasible bases does not depend on the value of $\beta$ as long as it is
  sufficiently close to 1.
\item $\whBeta(G)$ can be interpreted as a generalized network flow polytope, and this
  interpretation leads to a nice graph theoretical interpretation of the structure of feasible bases.
\item For a special class of bases, we prove a complete characterization of the feasible bases.
\item We illustrate the characterization of feasible bases for small values of $n$ and present 
computational results supporting Conjecture~\ref{con:EFKM}.
\end{enumerate}

\subsection*{Acknowledgement} We thank Hamish Waterer for pointing out the connection to generalized
network flows.

\printbibliography

\end{document}